\newcommand{\N}{\mathbb{N}}
\newcommand{\Z}{\mathbb{Z}}
\newcommand{\E}{\mathbb{E}}
\renewcommand{\P}{\mathbb{P}}
\newcommand{\var}{\operatorname{Var}}
\newcommand{\Cov}{\operatorname{Cov}}
\renewcommand{\epsilon}{\varepsilon}
\newcommand{\cG}{{\mathcal{G}}}
\newcommand{\given}{|}
\newcommand{\one}{\mathbh{1}}
\newcommand{\deq}{\stackrel{\triangle}{=}}
\newcommand{\K}{\mathcal{K}}
\newcommand{\cH}{\mathcal{H}}
\newcommand{\GC}{{\mathcal{C}_1}} 
\newcommand{\tGC}{{\tilde{\mathcal{C}}_1}}
\newcommand{\TC}[1][\mathcal{C}_1]{#1^{(2)}} 
\newcommand{\Po}{\operatorname{Po}}
\newcommand{\Bin}{\operatorname{Bin}}
\newcommand{\Geom}{\operatorname{Geom}}
\newcommand{\tmixu}{\tilde{t}_{\mathrm{MIX}}}
\newcommand{\tmix}{t_{\mathrm{MIX}}}
\newcommand{\diam}{\operatorname{diam}}
\newcommand{\dist}{\operatorname{dist}}
\newcommand{\eqref}[1]{(\ref{#1})}
\newtheorem{maintheorem}{Theorem}
\newtheorem{theorem}{Theorem}[section]
\newtheorem{lemma}[theorem]{Lemma}
\newtheorem{claim}[theorem]{Claim}
\newtheorem{proposition}[theorem]{Proposition}
\begin{document}
\begin{frontmatter}

\title{Mixing time of near-critical random graphs}
\runtitle{Mixing time of near-critical random graphs}

\begin{aug}
\author[A]{\fnms{Jian} \snm{Ding}\ead[label=e1]{jding@stat.berkeley.edu}},
\author[B]{\fnms{Eyal} \snm{Lubetzky}\corref{}\ead[label=e2]{eyal@microsoft.com}}
and
\author[B]{\fnms{Yuval} \snm{Peres}\ead[label=e3]{peres@microsoft.com}}

\runauthor{J. Ding, E. Lubetzky and Y. Peres}
\affiliation{University of California, Berkeley, Microsoft Research and~Microsoft~Research}
\address[A]{J. Ding\\
Department of Statistics\\
University of California, Berkeley\\
Berkeley, California 94720\\
USA\\
\printead{e1}} 
\address[B]{E. Lubetzky\\
Y. Peres\\
Microsoft Research\\
One Microsoft Way\\
Redmond, Washington 98052-6399\\
USA\\
\printead{e2}\\
\hphantom{E-mail: }\printead*{e3}}
\end{aug}

\received{\smonth{10} \syear{2009}}
\revised{\smonth{1} \syear{2011}}

%
\begin{abstract}
Let $\mathcal{C}_1$ be the largest component of the Erd\H{o}s--R\'
{e}nyi random
graph $\mathcal{G}(n,p)$. The mixing time of random walk on $\mathcal
{C}_1$ in the
strictly supercritical regime, $p=c/n$ with fixed $c>1$, was shown to
have order $\log^2 n$ by Fountoulakis and Reed, and independently by
Benjamini, Kozma and Wormald. In the critical window,
$p=(1+\varepsilon)/n$ where $\lambda=\varepsilon^3 n$ is bounded, Nachmias
and Peres proved that the mixing time on $\mathcal{C}_1$ is of order $n$.
However, it was unclear how to interpolate between these results, and
estimate the mixing time as the giant component emerges from the
critical window. Indeed, even the asymptotics of the diameter of
$\mathcal{C}_1$
in this regime were only recently obtained by Riordan and Wormald, as
well as the present authors and Kim.

In this paper, we show that for $p=(1+\varepsilon)/n$ with
$\lambda=\varepsilon^3 n\to\infty$ and $\lambda=o(n)$, the mixing
time on
$\mathcal{C}_1$ is with high probability of order $(n/\lambda)\log^2
\lambda$. In
addition, we show that this is the order of the largest mixing time
over all components, both in the slightly supercritical and in the
slightly subcritical regime [i.e., $p=(1-\varepsilon)/n$ with $\lambda
$ as
above].
\end{abstract}

%
\begin{keyword}[class=AMS]
\kwd{05C80}
\kwd{05C81}
\kwd{60J10}.
\end{keyword}
\begin{keyword}
\kwd{Random graphs}
\kwd{random walk}
\kwd{mixing time}.
\end{keyword}

\end{frontmatter}

\section{Introduction}

There is a rich interplay between geometric properties of a graph and
the behavior of a random walk on it (see, e.g., \cite{AF}).
A particularly important parameter is the mixing time, which measures
the rate of convergence to stationarity.
In this paper, we focus on random walks on the classical Erd\H{o}s--R\'
{e}nyi random graph $\cG(n,p)$.

The geometry of $\cG(n,p)$ has been studied extensively since its
introduction in 1959 by Erd\H{o}s and R\'{e}nyi~\cite{ER59}.
A well-known phenomenon exhibited by this model, typical in
second-order phase transitions of mean-field models, is the \emph
{double jump}: For $p=c/n$ with $c$ fixed, the largest component $\GC$
has size $O(\log n)$ with high probability (w.h.p.), when $c<1$, it is
w.h.p.\ linear in $n$ for $c>1$, and for $c=1$ its size has order
$n^{2/3}$ (the latter was proved by Bollob\'{a}s~\cite{Bollobas84} and
{\L}uczak~\cite{Luczak90}). Bollob\'{a}s discovered that the critical
behavior extends throughout $p=(1\pm\epsilon)/n$ for $\epsilon=
O(n^{-1/3})$, a regime known as the \emph{critical window}.

Only in recent years were the tools of Markov chain analysis and the
understanding of the random graph sufficiently developed
to enable estimating mixing times on $\GC$.
Fountoulakis and Reed \cite{FR2} showed that, in the strictly
supercritical regime ($p=c/n$ with fixed $c>1$),
the mixing time of random walk on $\GC$
w.h.p.\ has order $\log^2 n$. Their proof exploited fairly simple
geometric properties of $\cG(n,p)$, while the key to their analysis
was a refined bound \cite{FR1} on the mixing time of a general Markov chain.
The same result was obtained independently by Benjamini, Kozma and
Wormald \cite{BKW}.
There, the main innovation was a decomposition theorem for the giant component.
However, the methods of these two papers do not yield the right order
of the mixing time when $c$ is allowed to tend to $1$.

Nachmias and Peres \cite{NP} proved that throughout the critical
window the mixing time on $\GC$ is of order $n$.
The proof there used branching process arguments, which were effective
since the critical $\GC$ is close to a tree.

It was unclear how to interpolate between these results, and estimate
the mixing time as the giant component emerges from the critical
window, since the methods used for the supercritical and the critical
case were so different.
The focus of this paper is primarily on the emerging supercritical
regime, where $p=(1+\epsilon)/n$ with $\epsilon^3 n \to\infty$ and
$\epsilon=o(1)$. In this regime, the largest component is
significantly larger than the others, yet its size is still sublinear.
Understanding the geometry of $\GC$ in this regime has been
challenging: Indeed, even the asymptotics of its diameter were only
recently obtained by Riordan and Wormald~\cite{RW}, as well as in
\cite{DKLP2}.

Our main result determines the order of the mixing time throughout the
emerging supercritical regime (see Section \ref{subsec:mixing} for
a formal definition of mixing time).

\begin{maintheorem}[(Supercritical regime)]\label{mainthm-super}
Let $\GC$ be the largest component of $\cG(n,p)$ for $p = \frac{1 +
\epsilon}{n}$, where $\epsilon\to0$ and $\epsilon^3
n\to\infty$. With high probability, the mixing time of the lazy
random walk on $\GC$ is of order $\epsilon^{-3} \log^2 (\epsilon^3 n)$.
\end{maintheorem}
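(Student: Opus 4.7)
The plan is to prove matching upper and lower bounds of order $\epsilon^{-3}\log^2\lambda$ (with $\lambda = \epsilon^3 n$) by exploiting the fine structure of the emerging giant component.

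I would first invoke the structural description of $\GC$ in this regime, building on prior work such as \cite{DKLP2,RW}. In brief: $|\GC| \sim 2\epsilon n$; the 2-core $\TC$ has $\sim 2\epsilon^2 n$ vertices; contracting the maximal degree-$2$ paths of $\TC$ yields a kernel $\K$ on $\Theta(\lambda)$ vertices that is a.a.s.\ near-$3$-regular, hence a good expander with spectral gap $\Omega(1)$. Each edge of $\K$ corresponds to a subdivided path in $\TC$ whose length is approximately $\Geom(\epsilon)$, so the longest subdivided path has length $L_{\max} = \Theta(\epsilon^{-1}\log\lambda)$. Finally, a subcritical Galton--Watson-type pendant tree of expected size $\sim 1/\epsilon$ is attached to each vertex of $\TC$, together contributing the remaining $\sim \epsilon n$ vertices.

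For the upper bound I would proceed in two stages. Stage one bounds $\tmix(\TC)$ by viewing $\TC$ as a subdivided expander: since the longest subdivided edge has length $L_{\max}$ and the kernel is an $\Omega(1)$-expander, the slowest eigenmode of $\TC$ is localized on the longest subdivided path, yielding both relaxation and mixing times of order $L_{\max}^2 = \Theta(\epsilon^{-2}\log^2\lambda)$. Stage two transfers this to $\GC$: since the pendant-tree mantle accounts for a factor $|\GC|/|\TC| = \Theta(\epsilon^{-1})$ of the stationary mass, the walk on $\GC$ projected to $\TC$ (by suppressing tree excursions) is essentially a random walk on $\TC$ whose steps take expected $\Theta(1/\epsilon)$ time in $\GC$, giving $\tmix(\GC) = O(\epsilon^{-1}\cdot \tmix(\TC)) = O(\epsilon^{-3}\log^2\lambda)$. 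Rigorously, this can be carried out via a Dirichlet form comparison between the walks on $\GC$ and $\TC$, or by applying the Fountoulakis--Reed conductance profile bound \cite{FR1} directly to $\GC$.

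For the lower bound I would use the Dirichlet variational principle. Fix a longest subdivided path $P$ in $\TC$ of length $L_{\max}$ and define $f \colon \GC \to \Z$ by setting $f(v)$ equal to the position of $v$ along $P$ if $v \in P$, and extending $f$ to be constant (equal to the value at the nearest $P$-vertex) on each pendant tree or adjacent subdivided path branching off $P$. The Dirichlet energy $\mathcal{E}(f,f)$ lives only on the $L_{\max}$ edges of $P$ and evaluates to $\Theta(L_{\max}/(\epsilon n))$, while the variance $\var(f)$ is inflated by a factor $\Theta(1/\epsilon)$ at each position along $P$ due to the attached trees, giving $\var(f) = \Theta(L_{\max}^3/(\epsilon^2 n))$. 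The ratio yields the spectral gap bound $\gamma \leq O(\epsilon/L_{\max}^2) = O(\epsilon^3/\log^2\lambda)$, hence $\tmix \geq 1/\gamma = \Omega(\epsilon^{-3}\log^2\lambda)$.

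The hard part will be the upper bound, specifically controlling the interaction between the $\TC$-subdivisions and the pendant-tree mantle. The path lengths in $\TC$ and the depths of pendant trees are both of order $\epsilon^{-1}\log\lambda$ at maximum, so a crude comparison does not separate their contributions cleanly: quantifying the $\Theta(\epsilon^{-1})$ tree-mantle slowdown factor without losing additional logarithms requires precise structural information about the joint distribution of 2-core subdivision lengths and attached tree sizes, which goes beyond the existing results on $\GC$ and is likely the most delicate ingredient of the proof.
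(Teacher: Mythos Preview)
Your overall framework matches the paper's: invoke the structural description of $\GC$ (kernel/2-core/pendant trees), bound the mixing time on the 2-core $\cH$ first, then extend to $\GC$. Your Stage~1 (mixing on $\cH$ via subdivided-expander conductance arguments, landing at $\Theta(\epsilon^{-2}\log^2\lambda)$) is essentially what the paper does via the Fountoulakis--Reed conductance-profile bound.

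For Stage~2 you correctly flag the difficulty but do not propose a concrete mechanism. The paper's route is neither a Dirichlet-form comparison nor a direct conductance bound on $\GC$. It observes the walk on $\GC$ only at its visits to $\cH$ (recovering the lazy walk $(W_j)$ on $\cH$), bounds the expected real time to complete $\ell$ such steps by $\sum_w a_{v,w}(\ell)\,|\mathcal{T}_w|$ where $a_{v,w}$ is the expected local time of $W$ at $w$, proves $\max_w a_{v,w}(\ell)=O(\epsilon^{-1}\log\lambda)$ via an effective-conductance estimate on $\cH$, and then controls the weighted sum of i.i.d.\ PGW-tree sizes by a tailored concentration inequality. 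The result is fed into the Lov\'asz--Winkler forget-time/Ces\`aro-mixing equivalence rather than into a spectral or conductance bound. Your suggested alternatives may be workable but would need substantially more than you have sketched; in particular the ``expected $\Theta(1/\epsilon)$ time per projected step'' heuristic is false pointwise, since the largest pendant tree has size $\Theta(\epsilon^{-2}\log\lambda)$.

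Your lower bound has a genuine gap. You claim $\mathcal{E}(f,f)$ lives only on the $L_{\max}$ edges of $P$, but this is impossible for your $f$: the kernel $\K$ is an expander, so removing the single kernel edge corresponding to $P$ does not disconnect it, and hence both endpoints of $P$ lie in the same component of $\GC\setminus(\text{interior of }P)$. Any extension of $f$ that is constant on connected pieces off $P$ must therefore assign the same value to both endpoints, contradicting $f(\text{endpoints})\in\{0,L_{\max}\}$. Any consistent global extension forces an edge somewhere with a jump of order $L_{\max}$, contributing $\Theta(L_{\max}^2)$ to the Dirichlet form; this dominates your claimed $\Theta(L_{\max})$ and degrades the resulting relaxation-time bound to $\Omega(\epsilon^{-2}\log\lambda)$, short of the target by a factor $\epsilon^{-1}\log\lambda$. (A tent function $f(i)=\min(i,L_{\max}-i)$ on $P$, zero off $P$ and its pendant trees, would repair this.) The paper's lower bound uses a different mechanism altogether: it shows that w.h.p.\ some pendant tree contains a vertex at depth $r\asymp\epsilon^{-1}\log\lambda$ whose subtree has size $s\asymp\epsilon^{-2}\log\lambda$, and argues via electrical networks that the walk started there needs time $\Omega(rs)=\Omega(\epsilon^{-3}\log^2\lambda)$ to reach the root.
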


While the second largest component $\mathcal{C}_2$ has a mixing time
of smaller order (it is
w.h.p.\ a tree, and given that event, it is a uniform tree on its
vertices and as such has $\tmix\asymp|\mathcal{C}_2|^{3/2}$ (see,
e.g., \cite{NP}), that is $\tmix\asymp \epsilon^{-3} \log
^{3/2}(\epsilon^3 n)$ as $|\mathcal{C}_2| \asymp\epsilon^{-2} \log
(\epsilon^3 n)$ w.h.p.), it turns out that w.h.p.\ there exists an
even smaller component, whose mixing time is of the same order as on
$\GC$.
This is captured by our second theorem, which also handles the
subcritical regime.

\begin{maintheorem}[(Controlling all components)]\label{mainthm-sub}
Let $G \sim\cG(n,p)$ for $p = (1 \pm\epsilon)/n$, where $\epsilon
\to0$ and $\epsilon^3
n\to\infty$. Let $\mathcal{C}^\star$ be the component of $G$
that\vadjust{\goodbreak}
maximizes the mixing time of the lazy random walk on it, denoted by
$\tmix^\star$.
Then with high probability, $\tmix^\star$ has order $\epsilon^{-3}
\log^2 (\epsilon^3 n)$. This also holds when maximizing only over
tree components.
\end{maintheorem}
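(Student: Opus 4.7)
The argument decomposes into a uniform upper bound $\tmix^\star\leq C\epsilon^{-3}\log^2\lambda$ (with $\lambda=\epsilon^3 n$) and a matching lower bound witnessed by a tree component. For the upper bound, Theorem~\ref{mainthm-super} handles $\GC$ in the supercritical regime; every other component is, with high probability, a tree, modulo $O(1)$ cyclic exceptions absorbable into the bound. For a tree $T$ the commute-time identity $\E_u\tau_v+\E_v\tau_u=2(|V(T)|-1)\,\mathrm{dist}(u,v)$ yields $\tmix(T)=O(|V(T)|\,\diam(T))$, so it suffices to show $|V(T)|\,\diam(T)=O(\epsilon^{-3}\log^2\lambda)$ uniformly over tree components. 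Standard estimates give $|V(T)|\leq C\epsilon^{-2}\log\lambda$ w.h.p.; for the diameter, combining the expected count $N_k\asymp(n/k^{5/2})\,e^{-k\epsilon^2/2}$ of size-$k$ tree components with the Aldous-type tail bound $\P[\diam(T_k)\geq d]\leq e^{-cd^2/k}$ for uniform labeled trees $T_k$, a union bound over $k\leq C\epsilon^{-2}\log\lambda$ shows that taking $d=C'\epsilon^{-1}\log\lambda$ dominates every tree-component diameter.

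For the matching lower bound---which must be realized by a tree component, so as to handle the ``tree components only'' statement---I would use a second-moment argument to produce a caterpillar-shaped tree component of size $k\asymp \epsilon^{-2}\log\lambda$ whose induced spine has length $\ell\asymp\epsilon^{-1}\log\lambda$ and whose $k-\ell$ non-spine vertices are distributed in balanced subtrees (each of size $\asymp 1/\epsilon$) rooted along the spine. Let $Y$ count such (marked-spine) tree components. A direct enumeration using labeled caterpillar counts, multiplied by the component-probability factor $p^{k-1}(1-p)^{\binom{k}{2}-(k-1)+k(n-k)}$, yields $\E Y\to\infty$ for the optimal choice $k\asymp(3/2)\epsilon^{-2}\log\lambda$---notably \emph{not} the largest tree component (size $\sim 2\epsilon^{-2}\log\lambda$), whose typical diameter $\asymp\sqrt{k}$ is too short. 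Since two overlapping tree components must in fact coincide, $\E[Y^2]=(1+o(1))(\E Y)^2$, and Chebyshev yields $Y>0$ w.h.p. For such $T$, the commute time between the two spine endpoints equals $2(|V(T)|-1)\,\ell\asymp k\ell$; because the mass is balanced along the spine, the lazy walk projects (approximately) to a birth-death chain on the spine whose stationary weights are proportional to the attached subtree sizes, and for such chains mixing matches the relaxation/commute scale up to constants. This gives $\tmix(T)\geq ck\ell\asymp\epsilon^{-3}\log^2\lambda$.

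The main obstacle is the lower bound. One cannot simply seek a pure path component of length $\epsilon^{-3/2}\log\lambda$ and invoke the $\tmix\asymp\ell^2$ bound for a path, since pure path components exist only up to length $O(\log n)$---well short of the required length. One is therefore forced to count trees whose \emph{induced} spine has the required length. A further subtlety is that long diameter alone is insufficient for the mixing-time lower bound: a long pendant path attached to a star gives $\tmix\asymp\ell^2$ rather than $k\ell$, losing a factor of $\epsilon$ from the target. Thus the enumeration must enforce balance of the attached subtrees, not merely the existence of a long diameter path; with this refinement the second-moment computation closes and the proof carries through uniformly across both regimes $p=(1\pm\epsilon)/n$ with $\epsilon\to 0$ and $\epsilon^3 n\to\infty$.
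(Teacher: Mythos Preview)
Your upper bound is essentially the paper's: control $\tmix(T)\le C\,|T|\,\diam(T)$ via the commute-time identity, then bound $|T|$ and $\diam(T)$ uniformly. The paper simply cites {\L}uczak (1998) for the diameter rather than rederiving it from uniform-tree tail bounds, and it invokes supercritical/subcritical duality to reduce everything outside $\GC$ to the subcritical picture.

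The lower bounds differ substantially. You enumerate balanced caterpillars by a second-moment count and then argue that the walk projects to a delayed birth--death chain on the spine. The paper instead seeks a much weaker structure: a tree component $\mathcal{T}$ containing two vertices $v,w$ at distance $r\asymp\epsilon^{-1}\log\lambda$ whose far subtrees $\mathcal{T}_v,\mathcal{T}_w$ each have size $\ge s\asymp\epsilon^{-2}\log\lambda$. Existence comes not from direct enumeration but from approximating the exploration process by i.i.d.\ PGW$(1-2\epsilon)$ trees; each satisfies this event with probability $\ge c\,\epsilon\,\lambda^{-1/2}$, and since w.h.p.\ at least $\epsilon^2 n/2$ components are explored, one succeeds (then one checks its tree-excess is zero). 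The mixing lower bound is then immediate from the voltage formula: every vertex of $\mathcal{T}_v$ sits at potential $\ge r$ relative to $w$, so $\E_\xi\tau_w\ge r\,|\mathcal{T}_v|\ge rs$, while $\pi(\mathcal{T}_w)\ge s/|\mathcal{T}|$ is bounded below because $|\mathcal{T}|\le C\epsilon^{-2}\log\lambda$. Your star worry is thus resolved not by balancing many small subtrees along a spine but by placing \emph{one} large subtree at each end---the far one supplies stationary mass, the near one supplies the hitting-time lower bound.

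Your route should go through, but two steps are sketchier than you indicate. First, the labeled-caterpillar count with balanced pendant subtrees is considerably more delicate than the PGW computation above, and pinning down $\E Y\to\infty$ at the precise scale $k\asymp(3/2)\epsilon^{-2}\log\lambda$ requires constant-tracking the paper's argument avoids. Second, ``mixing matches the commute scale up to constants'' is not automatic for trees (your own star example shows $\tmix\ll t_{\mathrm{hit}}$ can occur); the walk on the caterpillar does not literally project to a Markov chain on the spine, so you would need either a Dirichlet-form comparison with the weighted path or the same hitting-time/stationary-mass argument the paper uses---for which two large subtrees already suffice and the full balanced caterpillar is unnecessary.
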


In the area of random walk on random graphs, the following two regimes
have been analyzed extensively.
\begin{itemize}
\item The supercritical regime, where $\tmix\asymp(\diam)^2$ with
$\diam$ denoting the intrinsic diameter in the percolation cluster.
Besides $\cG(n, \frac{c}{n})$ for $c>1$, this also holds in the torus
$\Z_n^d$ by \cite{BM} and \cite{MR}.
\item The critical regime on a high dimensional torus, where $\tmix
\asymp(\diam)^3$.
As mentioned above, for critical percolation on the complete graph,
this was shown in \cite{NP}. For high dimensional tori, this is a
consequence of \cite{HvdH}.
\end{itemize}
To the best of our knowledge, our result is the first interpolation for
the mixing time between these two different powers of the diameter.


\section{Preliminaries}\label{sec:prelim}

\subsection{Cores and kernels} The \emph{$k$-core} of a graph $G$,
denoted by $G^{(k)}$, is the maximum subgraph $H \subset G$ where every
vertex has degree at least $k$. It is well known (and easy to see) that
this subgraph is unique, and can be obtained by repeatedly deleting any
vertex whose degree is smaller than $k$ (at an arbitrary order).

We call a path $\mathcal{P} = v_0, v_1, \ldots, v_k$ for $k > 1$
(i.e., a sequence of vertices with $v_i v_{i+1}$ an edge for each $i$)
a \emph{2-path} if and only if $v_i$ has
degree $2$ for all $i = 1, \ldots, k-1$ (while the endpoints $v_0,v_k$
may have degree larger than $2$, and possibly $v_0=v_k$).

The \emph{kernel} $\K$ of $G$ is obtained by taking its $2$-core $\TC
[G]$ minus its disjoint cycles, then repeatedly contracting all 2-paths
(replacing each by a single edge). Note that, by definition, the degree
of every vertex in $\K$ is at least $3$.

\subsection{Structure of the supercritical giant component}
The key to our analysis of the random walk on the giant component $\GC
$ is the following result from our companion paper \cite{DKLP1}.
This theorem completely characterizes the structure of $\GC$, by
reducing it to a tractable contiguous model $\tGC$.
\begin{theorem}[\cite{DKLP1}]\label{thm-struct-gen}
Let $\GC$ be the largest component of $\cG(n,p)$ for $p = \frac{1 +
\epsilon}{n}$, where $\epsilon^3 n\to\infty$ and $\epsilon\to0$.
Let $\mu<1$ denote the conjugate of $1+\epsilon$, that is,
$\mu\mathrm{e}^{-\mu} = (1+\epsilon) \mathrm{e}^{-(1+\epsilon)}$.
Then $\GC$ is contiguous to the following model $\tGC$:
\begin{enumerate}
\item\hypertarget{item-struct-gen-degrees}{} Let $\Lambda\sim\mathcal{N}
(1+\epsilon- \mu, \frac1{\epsilon n} )$ and assign
i.i.d.\  variables $D_u \sim\operatorname{Poisson}(\Lambda)$ ($u \in[n]$)
to the vertices, conditioned that $\sum D_u \one_{\{D_u\geq3\}}$ is even.
Let
\[
N_k = \#\{u \dvtx D_u = k\}\quad  \mbox{and}\quad N= \sum_{k\geq3}N_k.
\]
Select a random multigraph $\K$ on $N$ vertices, uniformly among all
multigraphs with $N_k$ vertices of degree $k$ for $k\geq3$.
\item\hypertarget{item-struct-gen-edges}{} Replace the edges of $\K$ by
paths of lengths i.i.d.\ $\Geom(1-\mu)$. 
%
\item\hypertarget{item-struct-gen-bushes}{} Attach an independent $
\operatorname{Poisson}(\mu)$--Galton--Watson tree to each vertex.
\end{enumerate}
That is, $\P(\tGC\in\mathcal{A}) \to0$ implies $\P(\GC\in
\mathcal{A}) \to0$
for any set of graphs $\mathcal{A}$ that is closed under graph-isomorphism.
\end{theorem}

In the above, a Poisson($\mu$)--Galton--Watson tree is the family tree
of a~Galton--Watson branching process with offspring distribution
$\operatorname{Poisson}(\mu)$. We will use the abbreviation PGW($\mu
$)-tree for this object. A multigraph is the generalization of a simple
graph permitting multiple edges and loops.

Note that conditioning on $\sum D_u \one_{\{D_u\geq3\}}$ being even
does not pose a problem, as one can easily use rejection sampling.
The 3 steps in the description of $\tGC$ correspond to constructing
its kernel $\K$ (Step~\hyperlink{item-struct-gen-degrees}{1}), expanding $\K$
into the 2-core $\TC[\tGC]$ (Step~\hyperlink{item-struct-gen-edges}{2}), and
finally attaching trees to it to obtain $\tGC$ (Step \hyperlink{item-struct-gen-bushes}{3}).

Further observe that $N_k \asymp\epsilon^k n$ for any fixed $k\geq
2$, and so in the special case where $\epsilon= o(n^{-1/4})$
w.h.p.\ we have $D_u \in\{0,1,2,3\}$ for all $u\in[n]$, and the
kernel $\K$ is simply a uniform $3$-regular multigraph.

Combining the above description of the giant component with standard
tools in the study of random graphs with given degree-sequences, one
can easily read off useful geometric properties of the kernel. This is
demonstrated by the following lemma of \cite{DKLP1}, for which we
require a few definitions:
For a~vertex $v$ in $G$ let $d_G(v)$ denote its degree and for a~subset
of vertices $S$ let
\[
d_G(S) \deq\sum_{v\in S} d_G(v)
\]
denote the sum of the degrees of its vertices (also referred to as the
\emph{volume} of $S$ in $G$). The \emph{isoperimetric number} of a
graph $G$ is defined to be
\[
i(G) \deq\min \biggl\{ \frac{e(S,S^c)}{d_G(S)} \dvtx S \subset V(G) , d_G(S)
\leq e(G) \biggr\} ,
\]
where $e(S,T)$ denotes the number of edges between $S$ and $T$ while
$e(G)$ is the total number of edges in $G$.
\begin{lemma}[(\cite{DKLP1}, Lemma 3.5)]\label{lem-kernel-expander}
Let $\K$ be the kernel of the largest component $\GC$ of $\cG(n,p)$
for $p = \frac{1 + \epsilon}{n}$, where $\epsilon^3 n\to\infty$
and $\epsilon\to0$. Then w.h.p.,
\[
|\K|= \bigl(\tfrac43+o(1) \bigr)\epsilon^3 n ,\qquad  e(\K) = \bigl(2+o(1)\bigr)\epsilon^3 n ,
\]
and $i(\K) \geq\alpha$ for some absolute constant $\alpha>0$.
\end{lemma}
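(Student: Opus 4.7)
The plan is to work inside the contiguous model $\tGC$ from Theorem~\ref{thm-struct-gen}; any property that holds w.h.p.\ in $\tGC$ transfers to $\GC$. In that model the kernel is produced explicitly: sample $\Lambda$, then i.i.d.\ $D_u \sim \Po(\Lambda)$, retain the $N$ vertices with $D_u\geq 3$, and pair their half-edges uniformly (the configuration model). Expanding $\mu\mathrm{e}^{-\mu}=(1+\epsilon)\mathrm{e}^{-(1+\epsilon)}$ around $x=1$ gives $\mu=1-\epsilon+\tfrac{2}{3}\epsilon^2+O(\epsilon^3)$, so $\Lambda$ has mean $(2+o(1))\epsilon$ and standard deviation $(\epsilon n)^{-1/2}=o(\epsilon)$; hence $\Lambda=(2+o(1))\epsilon$ w.h.p. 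Conditional on this event, Poisson tail estimates yield $\P(D_u=k)=(1+o(1))(2\epsilon)^k/k!$ for $k\geq 3$, so $\E[N_k]=(1+o(1))(2\epsilon)^k n/k!$. Summing gives $\E|\K|=(\tfrac{4}{3}+o(1))\epsilon^3 n$ and $\E[\sum_u D_u\oneb{D_u\geq 3}]=(4+o(1))\epsilon^3 n$, whence $\E\,e(\K)=(2+o(1))\epsilon^3 n$; the parity conditioning only perturbs these by a factor $1+o(1)$, since its probability is bounded away from $0$ and $1$. Each $N_k$ is a sum of $n$ i.i.d.\ Bernoullis, so $\var(N_k)\leq \E[N_k]$, and as the dominant term $\E[N_3]\asymp \epsilon^3 n\to\infty$, Chebyshev yields $|\K|=(\tfrac{4}{3}+o(1))\epsilon^3 n$ and $e(\K)=(2+o(1))\epsilon^3 n$ w.h.p.

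For the isoperimetric bound, I would condition on a typical degree sequence satisfying the above, together with the (easy) bound $\Delta(\K)\leq C\log n$ from Poisson tails. Then $\K$ is a uniform configuration-model multigraph of minimum degree $3$ on $N\sim \tfrac{4}{3}\epsilon^3 n$ vertices with $M\sim 2\epsilon^3 n$ edges. Fix a small constant $\alpha>0$. For a subset $S$ with volume $d_\K(S)=s\leq M$, the uniform half-edge pairing produces an expected number of internal edges at most $s^2/(4M)\leq s/4$, and a standard configuration-model large-deviation inequality (via switching, or an Azuma-type bound on the sequential pairing) yields
\[
\P\bigl(e(S,S^c)\leq \alpha s\bigr)\leq \exp(-\beta s)
\]
for some $\beta=\beta(\alpha)>0$ with $\beta\to\infty$ as $\alpha\to 0$. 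Since $\K$ has minimum degree $\geq 3$, any set of volume $s$ contains at most $s/3$ vertices, so the number of candidate sets at volume $s$ is bounded by $\binom{N}{\lfloor s/3\rfloor}\leq (3eN/s)^{s/3}$, whose logarithm is $O(s)$ because $s\leq M\asymp N$. Choosing $\alpha$ small enough (so that $\beta$ dominates the entropy constant) makes the union bound $\sum_s (3eN/s)^{s/3}\exp(-\beta s)$ tend to $0$, delivering $i(\K)\geq \alpha$ w.h.p.

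The main obstacle is the isoperimetric estimate: one needs a large-deviation inequality for edge counts in the configuration model that is effective uniformly in $|S|$, especially at small scales where the entropy $\tfrac{s}{3}\log(N/s)$ is of the same order as $s$ itself. The rare vertices of degree much larger than $3$ contribute only a technicality, since their total volume is $o(\epsilon^3 n)$ and they can be absorbed into the analysis at negligible cost; what remains is essentially a random cubic multigraph, for which constant edge expansion is classical.
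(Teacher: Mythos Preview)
This lemma is not proved in the present paper; it is quoted from the companion paper \cite{DKLP1} (as Lemma~3.5 there) and invoked as a black box, so there is no in-paper argument to compare against.

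On your sketch itself: the asymptotics for $|\K|$ and $e(\K)$ via the contiguous model and Poisson moments are correct and essentially routine. For the isoperimetric bound your overall plan (configuration model plus a first-moment union bound over cuts) is the standard one, but the specific inequality you display, $\P(e(S,S^c)\leq\alpha s)\leq\exp(-\beta s)$ with a fixed $\beta=\beta(\alpha)$, is too weak to close the argument. Your assertion that $\log\binom{N}{\lfloor s/3\rfloor}=O(s)$ ``because $s\leq M\asymp N$'' is backwards: the entropy $(s/3)\log(3eN/s)$ is $O(s)$ only when $s\geq cN$, and for $s=o(N)$ it is $\Theta(s\log(N/s))$, which defeats any fixed multiple of $s$. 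The cure, which you gesture at in your last paragraph but do not implement, is that in the configuration model one actually has the sharper bound
\[
\P\bigl(e_\K(S)\geq(1-\alpha)s/2\bigr)\;\leq\;\Bigl(\frac{Cs}{M}\Bigr)^{(1-\alpha)s/2},
\]
since each internal half-edge pairing costs a factor of order $s/M$; the extra $\log(M/s)$ in the exponent is precisely what kills the small-set entropy and lets the union bound $\sum_s(3eN/s)^{s/3}(Cs/M)^{(1-\alpha)s/2}$ converge once $\alpha<1/3$. With this correction the strategy goes through, and your final appeal to the classical expansion of random near-cubic multigraphs is then legitimate.
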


\subsection{Notions of mixing of the random walk}\label{subsec:mixing}
For any two distributions~$\varphi,\psi$ on~$V$, the
\textit{total-variation distance} of $\varphi$ and $\psi$ is defined as
\[
\|\varphi-\psi\|_\mathrm{TV} \deq\sup_{S \subset V} |\varphi(S) -
\psi(S) | = \frac{1}{2}\sum_{v\in V} |\varphi(v)-\psi(v)| .\vadjust{\goodbreak}
\]
Let $(S_t)$ denote the lazy random walk on $G$, that is, the Markov chain
which at each step holds its position with probability $\frac12$ and
otherwise moves to a uniformly chosen neighbor. This is an aperiodic
and irreducible Markov chain,
whose stationary distribution $\pi$ is given by
\[
\pi(x) = d_G(x) / 2|E| .
\]
We next define two notions of measuring the distance of an ergodic
Markov chain $(S_t)$, defined on a state-set $V$, from its stationary
distribution $\pi$.

Let $0 < \delta< 1$. The (worst-case) \emph{total-variation mixing
time} of $(S_t)$ with parameter $\delta$, denoted by $\tmix(\delta
)$, is defined to be
\[
\tmix(\delta) \deq\min\Bigl\{t \dvtx \max_{v \in V} \| \P_v(S_t \in\cdot
)- \pi\|_\mathrm{TV} \leq\delta \Bigr\} ,
\]
where $\P_v$ denotes the probability given that $S_0=v$.

The \emph{Ces\`{a}ro mixing time} (also known as the approximate
uniform mixing
time) of $(S_t)$ with parameter $\delta$, denoted by $\tmixu(\delta
)$, is defined as
\[
\tmixu(\delta) = \min\Biggl\{t \dvtx\max_{v \in V}
\Biggl\|\pi-\frac{1}t\sum_{i=0}^{t-1}\P_v(S_i\in
\cdot) \Biggr\|_\mathrm{TV} \leq\delta \Biggr\} .
\]

When discussing the order of the mixing-time it is customary to choose
$\delta=\frac14$, in which case we will use the abbreviations
$\tmix= \tmix(\frac14)$ and $\tmixu= \tmixu(\frac14)$.

By results of \cite{ALW} and \cite{LW95} (see also \cite{LW}), the
mixing time and the Ces\`{a}ro mixing time have the same
order for lazy reversible Markov chains (i.e., discrete-time chains
whose holding probability in each state is at least $\frac12$), as
formulated by the following theorem.
\begin{theorem}\label{thm-Cesaro-mixing-time}
Every lazy reversible Markov chain satisfies
\[
c_1 \tmixu\bigl(\tfrac14\bigr) \leq\tmix\bigl(\tfrac14\bigr) \leq c_2 \tmixu\bigl(\tfrac14\bigr)
\]
for some absolute constants $c_1,c_2 >0$.
\end{theorem}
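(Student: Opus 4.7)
The plan is to split the statement into its two inequalities and handle them separately, since the difficulty is highly asymmetric.

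For the lower bound $c_1 \tmixu \leq \tmix$, a direct averaging argument suffices. Setting $d(t) = \max_v \|\P_v(S_t \in \cdot) - \pi\|_{\mathrm{TV}}$ and invoking the general submultiplicativity $d(k\,\tmix) \leq 2^{-k}$ (which comes from the coupling identity $\bar d(s+t)\leq \bar d(s)\,\bar d(t)$ applied to $\bar d(t)=\max_{u,v}\|\P_u(S_t\in\cdot)-\P_v(S_t\in\cdot)\|_{\mathrm{TV}}$, combined with $d(t)\leq\bar d(t)\leq 2d(t)$), one obtains for any starting state $v$
\[
\Bigl\|\pi - \tfrac{1}{t}\sum_{i=0}^{t-1} \P_v(S_i \in \cdot)\Bigr\|_{\mathrm{TV}} \leq \frac{1}{t}\sum_{i=0}^{t-1} d(i) \leq \frac{C\,\tmix}{t}.
\]
Taking $t$ a suitable constant multiple of $\tmix$ drives the Cesaro distance below $1/4$, so $\tmixu \leq C'\,\tmix$. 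Laziness is used here only to ensure that $d(\cdot)$ is genuinely decreasing, avoiding parity issues.

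For the reverse inequality $\tmix \leq C\,\tmixu$, I would route the argument through the optimal stationary stopping time
\[
\bar T(v) := \inf\bigl\{\E_v T : T \text{ a randomized stopping rule with } S_T \sim \pi\bigr\},
\]
and invoke the Lov\'asz--Winkler theorem, which identifies $\max_v \bar T(v)$ with $\tmixu(1/4)$ up to absolute constants. The hard part will be this stopping-time identification: the direction $\bar T(v) \geq c\,\tmixu$ is immediate by averaging, but $\bar T(v) \leq C\,\tmixu$ requires constructing an efficient rule that terminates at an exactly $\pi$-distributed state. The classical construction is the \emph{filling rule}, which on each visit to a state $u$ stops with a probability calibrated to close the remaining gap between the empirical occupation measure and $\pi(u)$; verifying that its mean duration is controlled by the Cesaro mixing quantity is a delicate accounting argument in which reversibility and laziness are both used in an essential way (the former to align the empirical measure with $\pi$, the latter to guarantee that the rule has a well-defined stationary target at every step).

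Once the stopping-time bound is in hand, the passage back to $\tmix$ is routine. Fix $T$ with $\E_v T \leq 2 \max_u \bar T(u)$ and $S_T \sim \pi$; Markov's inequality then gives $\P_v(T > t) \leq 1/4$ for $t := 8\max_u \bar T(u)$. On the complementary event, the walk has already stopped by time $t$ at a $\pi$-distributed position and then continued from stationarity, which preserves $\pi$. Hence $\|\P_v(S_t\in\cdot)-\pi\|_{\mathrm{TV}} \leq 1/4$, so $\tmix(1/4) \leq 8\max_u \bar T(u) \leq C'\,\tmixu(1/4)$, completing the pair of inequalities. The entire novelty of the proof is concentrated in the filling-rule construction; everything else is standard averaging and a one-line Markov inequality.
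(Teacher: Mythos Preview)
Your treatment of the first inequality is essentially the paper's: both average using monotonicity of $d(t)$; you additionally invoke submultiplicativity, while the paper simply splits the Ces\`aro sum at $t/8$ and bounds the tail by $d(t/8)$.

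For the second inequality your final step has a genuine gap. From $S_T \sim \pi$ and $\P_v(T > t) \leq 1/4$ you conclude $\|\P_v(S_t \in \cdot) - \pi\|_{\mathrm{TV}} \leq 1/4$, arguing that on $\{T \leq t\}$ the walk has reached a $\pi$-distributed state and then evolved from stationarity. This inference is valid only when $T$ is a \emph{strong} stationary time, i.e.\ when $S_T$ is independent of $T$; for a merely stationary stopping rule the conditional law of $S_T$ given $\{T \leq t\}$ need not be $\pi$, and running the chain $t-T$ further steps does not repair this. The filling rule is mean-optimal but not strong, so the argument does not go through as written. Worse, since you have already identified $\max_v \bar T(v)$ with $\tmixu$ up to constants, the step ``$\tmix \leq C \max_v \bar T(v)$'' that you label routine is equivalent to the very inequality $\tmix \leq c_2\,\tmixu$ you are trying to prove; it is not a one-line Markov bound.

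The paper does not attempt a direct argument here either: it quotes \cite{LW95}*{Theorem~5.4} together with \cite{ALW}*{Theorem~C} to bound the continuous-time mixing time by $\tmixu$, and then invokes the standard equivalence of lazy discrete-time and continuous-time mixing (\cite{LPW}*{Theorem~20.3}). So both the paper and your proposal ultimately outsource the hard direction to the Lov\'asz--Winkler circle of results, but the paper's chain of citations actually delivers $\tmix \leq c_2\,\tmixu$, whereas your Markov-inequality shortcut does not.
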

\begin{pf}
The first inequality is straightforward and does not require
laziness or reversibility. We include its proof for completeness.
Notice that
\begin{eqnarray*}
\Biggl\|\pi-\frac{1}t\sum_{i=0}^{t-1}\P_v(S_i\in
\cdot) \Biggr\|_\mathrm{TV} &\leq&\frac{1}{8} +
\frac1t\sum_{i={t/8}}^{t-1} \|\pi-\P_v(S_i\in
\cdot) \|_\mathrm{TV} \\
&\leq&\frac{1}{8} + \|\pi- \P_v(S_{t/8} \in
\cdot)\|_{\mathrm{TV}} ,
\end{eqnarray*}
where we used the fact that $\|\pi- \P_v(S_{t} \in\cdot)\|$ is
decreasing in $t$. Taking $t = 8 \tmix(\frac{1}{8})$, we obtain
that $\tmixu(\frac{1}{4}) \leq8 \tmix(\frac{1}{8})$ and conclude
the proof of the first inequality using the well-known fact
that $\tmix(\frac{1}{8}) \leq4 \tmix(\frac{1}{4})$.

The second inequality of the theorem is significantly more involved:
By combining \cite{LW95}, Theorem 5.4, (for a stronger version, see
\cite{LW}, Theorem 4.22) and \cite{ALW}, Theorem C, it follows that
the order of the Ces\`{a}ro mixing time can be bounded by that of the
mixing time
for the corresponding continuous-time Markov chain. Now, using a
well-known fact that the mixing time for the lazy Markov chain and
the continuous-time chain have the same order (see, e.g.,
\cite{LPW}, Theorem 20.3), the proof is concluded.
\end{pf}


Let $\Gamma$ be a stopping rule (a randomized stopping time) for
$(S_t)$. That is, $\Gamma\dvtx G \times\Omega\to\N$ for some
probability space $\Omega$, such that $\Gamma(\cdot,\omega)$ is a
stopping time for every $\omega\in\Omega$.
Let $\sigma^\Gamma\deq\P_\sigma(S_\Gamma\in\cdot)$ when $\sigma
$ is a distribution on $V$.

Let $\sigma,\nu$ be two distributions on $V$. Note that there is
always a stopping rule $\Gamma$ such that $\sigma^\Gamma=\nu$,
for example, draw a vertex $z$ according to $\nu$ and stop when reaching $z$.
The \emph{access time} from $\sigma$ to $\nu$, denoted by $H(\sigma
,\nu)$, is the minimum expected number of steps over all such stopping rules:
\[
H(\sigma,\nu) \deq\min_{\Gamma\dvtx \sigma^\Gamma= \nu} \E\Gamma.
\]
It is easy to verify that $H(\sigma,\nu)=0$ iff $\sigma=\nu$ and
that $H(\cdot,\cdot)$ satisfies the triangle-inequality, however it
is not necessarily symmetric.

The \emph{approximate forget time} of $G$ with parameter $0 < \delta<
1$ is defined by
%
\begin{equation}
\label{eq-def-approx-forget}
\mathcal{F}_{\delta} = \min_\varphi\max_{\sigma} \min_{\nu\dvtx \|
\nu- \varphi\|_\mathrm{TV}\leq\delta} H(\sigma, \nu) .
\end{equation}
Combining Theorem 3.2 and Corollary 5.4 in \cite{LW}, one
immediately obtains that the approximate forget time and the Ces\`{a}ro
mixing time have the same order, as stated in the following theorem.
\begin{theorem}\label{thm-forget-mix}
Every reversible Markov chain satisfies
\[
c_1 \mathcal{F}_{1/4} \leq\tmixu\bigl(\tfrac14\bigr) \leq c_2 \mathcal{F}_{1/4}
\]
for some absolute constants $c_1,c_2 >0$.
\end{theorem}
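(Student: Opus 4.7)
The easier direction $\mathcal{F}_{1/4}\leq C\,\tmixu(1/4)$ follows by unpacking the definitions: I would take the candidate distribution in the forget-time definition to be $\varphi=\pi$, set $T=\tmixu(1/4)$, and from any starting state $\sigma$ consider the stopping rule $\Gamma$ that selects $i\in\{0,1,\ldots,T-1\}$ uniformly and stops at step $i$. By construction, $\sigma^\Gamma=\frac{1}{T}\sum_{i=0}^{T-1}\P_\sigma(S_i\in\cdot)$, which is within total variation $1/4$ of $\pi$ by the definition of $\tmixu$, while $\E\Gamma=(T-1)/2$. Choosing $\nu=\sigma^\Gamma$ in the forget-time definition gives $H(\sigma,\nu)\leq T/2$, hence $\mathcal{F}_{1/4}\leq \tmixu(1/4)/2$. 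This direction does not require reversibility.

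The harder direction $\tmixu(1/4)\leq C'\,\mathcal{F}_{1/4}$ is where reversibility is used. Let $F=\mathcal{F}_{1/4}$ with near-optimal target $\varphi$, so that from every start $\sigma$ there is a stopping rule $\Gamma_\sigma$ with $\E\Gamma_\sigma\leq F$ achieving $\nu_\sigma:=\sigma^{\Gamma_\sigma}$ within TV distance $1/4$ of $\varphi$. The plan is to promote this into a bound on the access time $H(\sigma,\pi)$ for every $\sigma$. By the triangle inequality $H(\sigma,\pi)\leq H(\sigma,\nu_\sigma)+H(\nu_\sigma,\pi)$, the first term is at most $F$, while the second is controlled by iterating further forget rules starting from $\nu_\sigma$: since $\nu_\sigma$ already lies within $1/4$ of $\varphi$, subsequent iterations produce distributions that contract geometrically toward a single fixed point, which by reversibility must be $\pi$. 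A geometric contraction argument then gives $\max_\sigma H(\sigma,\pi)=O(F)$, and invoking the Lov\'asz--Winkler equivalence $\tmixu\asymp\max_\sigma H(\sigma,\pi)$ for reversible chains \cite{LW95} completes the bound.

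The main obstacle is precisely this last step: translating an access-time bound into a Cesàro mixing bound. Whereas $H(\sigma,\pi)$ measures the expected length of an adaptively chosen stopping rule reaching \emph{exactly} $\pi$, $\tmixu(1/4)$ controls the time-averaged distribution at a \emph{deterministic} horizon. Bridging the two requires working with optimal stopping rules --- whose existence and useful tail behavior rely on reversibility through the filling-rule construction of \cite{LW95} --- and then controlling how Cesàro averages over a deterministic window interpolate between the initial and stopped distributions. The contraction argument in the preceding paragraph also needs care: one must ensure that the iteration of forget rules converges to the same limit regardless of the starting state, which again invokes reversibility to identify the limit with $\pi$.
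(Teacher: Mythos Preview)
The paper does not actually prove this theorem: it merely records that the result follows by combining Theorem~3.2 and Corollary~5.4 of \cite{LW}. So your proposal is already more than what the paper offers, and your easy direction $\mathcal{F}_{1/4}\leq \tfrac12\tmixu(1/4)$ via the uniform-time stopping rule is a correct and clean argument.

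For the hard direction, however, your contraction sketch has a genuine gap. Iterating the forget rule does not obviously contract: each application only guarantees that you land within total-variation distance $1/4$ of $\varphi$, not that you get closer than you were before. If you instead try to couple (writing $\nu_\sigma=\tfrac34\varphi+\tfrac14\rho$ on the overlap and reapplying the rule from the residual $\rho$), the iterates converge to $\varphi$, not to $\pi$, and nothing in the definition of $\mathcal{F}_{1/4}$ forces the optimal target $\varphi$ to equal $\pi$. So the assertion ``which by reversibility must be $\pi$'' is unsupported as written. You would still need a separate argument bounding $H(\varphi,\pi)$ by $O(F)$, and this is exactly where the nontrivial Lov\'asz--Winkler machinery (exit frequencies, halting states, the structure of optimal stopping rules) enters. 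Since you end up invoking \cite{LW95} for the access-time/Ces\`aro equivalence anyway, your proposal effectively reduces to the same citation the paper uses; the intermediate heuristic does not add rigor and should either be made precise or dropped in favor of the direct citation.
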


\subsection{Conductance and mixing}
Let $P = (p_{x, y})_{x,y}$ be the transition kernel of an irreducible,
reversible and aperiodic Markov chain on $\Omega$ with stationary
distribution $\pi$. For $S \subset\Omega$, define the
\textit{conductance} of the set $S$ to be
\[
\Phi(S) \deq\frac{\sum_{x\in S, y\notin S} \pi(x)p_{x,y}}{\pi
(S) \pi(\Omega\setminus S)} .
\]
We define $\Phi$, the conductance of the chain, by $\Phi\deq\min\{
\Phi(S) \dvtx \pi(S) \leq\frac12\}$
(In the special case of a lazy random walk on a connected regular
graph, this quantity is similar to the isoperimetric number of the
graph, defined earlier).
A well-known result of Jerrum and Sinclair \cite{JS} states that
$\tmix$ is of order at most $\Phi^{-2} \log\pi^{-1}_{\min}$, where
$\pi_{\min} = \min_{x \in\Omega}\pi(x)$. This bound was
fine-tuned by Lov\'{a}sz and Kannan \cite{LK} to exploit settings
where the conductance of the average set $S$ plays a dominant role
(rather than the worst set).
For our upper bound of the mixing time on the random walk on the
2-core, we will use an enhanced version of the latter bound (namely,
Theorem \ref{thm-Fountoulakis-Reed}) due to Fountoulakis and Reed
\cite{FR1}.

\subsection{Edge set notations}
Throughout the paper, we will use the following notations, which will
be handy when moving between the kernel and 2-core.

For $S \subset G$, let $E_G(S)$
denote the set of edges in the induced subgraph of $G$ on $S$, and
let $\partial_G S$ denote the edges between $S$ and its complement
$S^c \deq V(G)\setminus S$.
Let
\[
\bar{E}_G(S) \deq E_G(S) \cup\partial_G(S)
\]
and
define $e_G(S)\deq|E_G(S)|$.
We omit the subscript $G$ whenever its identity is made clear from the context.

If $\K$ is the kernel in the model $\tGC$ and $\cH$ is its $2$-core,
let
\[
E_\cH^\star\dvtx 2^{E(\K)}\to2^{E(\cH)}
\]
be the operator which takes a subset of edges $T\subset E(\K)$ and
outputs the
edges lying on their corresponding $2$-paths in $\cH$.
For $S \subset V(\K)$, we let
\[
E_\cH^\star(S) \deq E_\cH^\star(E_\K(S) ) ,\qquad
\bar{E}_\cH^\star(S) \deq E_\cH^\star(\bar{E}_\K(S) ) .
\]

\section{Random walk on the 2-core}\label{sec:rw-2-core}

In this section, we analyze the properties of the random walk on the
$2$-core $\TC[\tGC]$.

\subsection{Mixing time of the 2-core}\label{sec:two-core-mixing}
By the definition of our new model $\tGC$, we can study the 2-core
$\TC$ via the
well-known configuration model (see, e.g., \cite{Bollobas2} for
further details on this method). To simplify the notation, we let~$\cH
$ denote the 2-core of $\tGC$
throughout this section.

The main goal of the subsection
is to establish the mixing time of the lazy random walk on $\cH$, as
stated by the
following theorem.

\begin{theorem}\label{thm-core-mixing}
With high probability, the lazy random walk on $\cH$ has
a~Ces\`{a}ro mixing time $\tmixu$ of order $\epsilon^{-2}
\log^2 (\epsilon^3 n)$. Consequently, w.h.p.\ it also satisfies
$\tmix\asymp\epsilon^{-2}\log^2(\epsilon^3 n)$.
\end{theorem}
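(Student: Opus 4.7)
The plan is to prove matching upper and lower bounds of order $\epsilon^{-2}\log^2(\epsilon^3 n)$ on $\tmixu(\cH)$, and then invoke Theorem~\ref{thm-Cesaro-mixing-time} to transfer the same order to $\tmix$. I will work throughout with the explicit construction of $\cH$ from Theorem~\ref{thm-struct-gen}: the kernel $\K$ is sparse with $|\K|, e(\K) = \Theta(\epsilon^3 n)$ and $i(\K)\geq \alpha > 0$ by Lemma~\ref{lem-kernel-expander}, while each kernel edge is subdivided independently into a $\Geom(1-\mu)$-length 2-path. A key input is the maximum 2-path length $L^*$: since we have $\Theta(\epsilon^3 n)$ i.i.d.\ geometric variables with success probability $1-\mu\asymp\epsilon$, an elementary extremal calculation gives $L^* = (1+o(1))\,\epsilon^{-1}\log(\epsilon^3 n)$ with high probability.

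For the lower bound, I would use the Poincar\'e variational characterization of the spectral gap. Take $v_0$ at a diameter-realizing endpoint in $\cH$ (so $\diam(\cH) \geq L^*$) and set $f(v) := \dist_\cH(v,v_0)$. Since $|f(u)-f(v)|\leq 1$ across every edge, the Dirichlet form of the lazy walk satisfies $\mathcal{E}(f,f) \leq \tfrac14$. To lower bound $\var_\pi(f)$, I would show that a $\pi$-positive fraction of the 2-core sits at distance $\Omega(L^*)$ from $v_0$, using the volume provided both by the long 2-path itself and by the bushes hanging off kernel vertices at distance $\geq L^*/3$ from $v_0$; this yields $\var_\pi(f) \gtrsim L^{*2}$. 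Dividing gives $t_{\mathrm{rel}} \gtrsim \epsilon^{-2}\log^2(\epsilon^3 n)$, and $\tmix \geq c\, t_{\mathrm{rel}}$ for lazy reversible chains closes the lower bound.

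For the upper bound, I would apply the Fountoulakis--Reed inequality (Theorem~\ref{thm-Fountoulakis-Reed}), which controls $\tmix$ by a dyadic sum of reciprocal squared conductances. I would estimate the conductance profile by classifying each $S\subseteq V(\cH)$ via its kernel shadow $T_S = S\cap V(\K)$. If $d_\cH(S)$ is at most $L^*$, then $S$ essentially lives inside a single 2-path, and the min-degree-$2$ property of $\cH$ yields $\Phi(S) \gtrsim 1/d_\cH(S)$. For larger $S$, I would translate boundary edges in $\cH$ to boundary edges in $\K$ using the correspondence from Section~\ref{sec:prelim} (each kernel-cut 2-path contributes one edge to $e_\cH(\partial S)$), pair this with $i(\K)\geq\alpha$, and use the volume inflation $d_\cH(S) \asymp d_\K(T_S)/\epsilon$ to obtain $\Phi(S) \gtrsim \alpha\epsilon$. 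The resulting profile plugged into Theorem~\ref{thm-Fountoulakis-Reed} should yield the target $O(\epsilon^{-2}\log^2(\epsilon^3 n))$ bound.

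The main obstacle is avoiding a spurious logarithmic factor in the upper bound. The extraordinary 2-path of length $L^*$ witnesses a tiny conductance $\Phi(S) \asymp \epsilon/\log(\epsilon^3 n)$, and a naive cumulative Fountoulakis--Reed sum would register this small value at every dyadic scale of $\pi(S)$ above $L^*/e(\cH)$, thereby inflating the bound by an extra $\log(\epsilon^3 n)$. Circumventing this requires either a per-dyadic-scale refinement of the inequality (so that the long-path witness contributes at only $O(1)$ scales, each costing $L^{*2} = \epsilon^{-2}\log^2(\epsilon^3 n)$), or a separate argument showing that at volumes much larger than $L^*$ the set $S$ is forced either to absorb several long 2-paths (whose counts are controlled by the geometric tail, thereby raising $\Phi$) or to engage the kernel itself via Lemma~\ref{lem-kernel-expander}. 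Making this intermediate-scale improvement in the conductance profile both quantitative and robust under the randomness of the 2-path lengths is, I expect, the most delicate step of the proof.
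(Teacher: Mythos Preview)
Your overall architecture matches the paper's: Fountoulakis--Reed (Theorem~\ref{thm-Fountoulakis-Reed}) for the upper bound via the kernel shadow and Lemma~\ref{lem-kernel-expander}, and the long 2-path for the lower bound. But two corrections are needed.

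\emph{Lower bound.} Your variational route is unnecessarily elaborate, and the mention of ``bushes hanging off kernel vertices'' is a slip: there are no attached trees in $\cH$; those live only in $\tGC$. The paper dispenses with this in one line: w.h.p.\ there is a 2-path of length $L^*=(1-o(1))\epsilon^{-1}\log(\epsilon^3 n)$, and the lazy walk started at its midpoint is confined to the path (a set of stationary measure $o(1)$) for $c L^{*2}$ steps by gambler's ruin, giving $\tmix\gtrsim L^{*2}$ immediately. No global statement about what $\pi$-fraction of $\cH$ lies far from $v_0$ is required.

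\emph{Upper bound.} You slightly misdiagnose the obstacle. Since $\Phi(p)$ in Theorem~\ref{thm-Fountoulakis-Reed} ranges only over \emph{connected} $S$ with $\pi(S)\in[p/2,p]$, the long 2-path itself contaminates only the single scale $p\asymp L^*/e(\cH)$, not all larger ones. The genuine danger is a connected $S$ at a larger scale that strings together several atypically long 2-paths, keeping $|S_\K|$ small while $d_\cH(S)$ is large; your asserted ``volume inflation $d_\cH(S)\asymp d_\K(T_S)/\epsilon$'' is precisely what must be proved \emph{uniformly over all connected kernel shadows}, and is the content of Lemmas~\ref{lem-kernel-size-volume}--\ref{lem-conductance-1}. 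The mechanism is a union bound: first show that the number of connected kernel subsets of size $K$ containing a fixed vertex is at most $\mathrm{e}^{5K}$ (an exploration argument), then use $\E\,\mathrm{e}^{(\epsilon/2)Z}\leq\mathrm{e}$ for $Z\sim\Geom(1-\mu)$ to get $\P\bigl(|\bar{E}^\star_\cH(S_\K)|\geq(100/\epsilon)K\bigr)\leq\mathrm{e}^{-20K}$, which beats the enumeration. This yields Lemma~\ref{lem-conductance}: w.h.p.\ every connected $S\subset\cH$ with $d_\cH(S)\geq(200/\epsilon)\log(\epsilon^3 n)$ has $|\partial_\cH S|/d_\cH(S)\geq\iota\epsilon$. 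Below that threshold the trivial bound $\Phi(2^{-j})\geq 2^{j-2}/e(\cH)$ suffices. The Fountoulakis--Reed sum then splits cleanly: the $j\leq j^*$ terms contribute $j^*\cdot O(\epsilon^{-2})=O(\epsilon^{-2}\log(\epsilon^3 n))$, and the $j>j^*$ terms form a geometric series summing to $O(\epsilon^{-2}\log^2(\epsilon^3 n))$. No per-dyadic refinement of Theorem~\ref{thm-Fountoulakis-Reed} is required; your second proposed workaround (controlling long 2-paths via the geometric tail) is essentially what works, once coupled with the connected-set enumeration.
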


We will use a result of Fountoulakis and Reed \cite{FR2}, which bounds
the mixing time in terms of the isoperimetric profile of the graph
(measuring the expansion of sets of various volumes).\vadjust{\goodbreak} As a first step
in obtaining this data for the supercritical 2-core $\cH$,
the next lemma will show that a small subset of the kernel,
$S\subset\K$, cannot have too many edges in
$\bar{E}_{\cH}(S)$.

\begin{lemma}\label{lem-kernel-size-volume}
For $v\in\K$, define
\[
\mathfrak{C}_{v, K} \deq\{S \ni v\dvtx |S| = K \mbox{ and } S \mbox{ is a
connected subgraph of } \K\} .
\]
The following holds w.h.p.\ for every $v\in\K$, integer $K$ and $S
\in
\mathfrak{C}_{v, K}$:
\begin{enumerate}[(2)]
\item[(1)]\hypertarget{item-lem-kernel-1}{} $|\mathfrak{C}_{v, K}| \leq\exp[
5(K \vee\log(\epsilon^3 n)) ]$.
\item[(2)]\hypertarget{item-lem-kernel-2}{} $d_\K(S) \leq30 (K \vee\log
(\epsilon^3 n))$.
\end{enumerate}
\end{lemma}

\begin{pf}
By definition, $\Lambda= (2+o(1)) \epsilon$ w.h.p., thus standard
concentration arguments imply that
the following holds w.h.p.:
%
\begin{equation}\label{eq-N-k}\qquad
N_3= \biggl(\frac{4}{3}+o(1) \biggr)\epsilon^3 n \quad \mbox{and}\quad  N_k \leq\frac
{(3\epsilon)^{k}\log(1/\epsilon)}{k!}n\qquad
\mbox{for } k\geq4 .
\end{equation}
Assume that the above indeed holds, and notice that the lemma trivially
holds when $K \geq
\epsilon^3 n$. We may therefore further assume that $K \leq\epsilon
^3 n$.

Consider the following exploration process, starting from the vertex
$v$. Initialize $S$ to be $\{v\}$, and mark $v_1 = v$.
At time $i \geq1$, we explore the neighborhood of $v_i$ (unless $|S| <
i$), and for each its neighbors that does not already belong to $S$,
we toss a fair coin to decide whether or not to insert it to $S$. Newly
inserted vertices are labeled according to the order of their arrival;
that is, if $|S|=k$ prior to the insertion, we give the new vertex the
label $v_{k+1}$. Finally, if $|S|<i$ at time $i$ then we stop the
exploration process.

Let $X_i$ denote the degree of the vertex $v_i$ in the above defined
process. In order to stochastically dominate
$X_i$ from above, observe that the worst case occurs when each of the
vertices in $v_1,\ldots,v_{i-1}$ has
degree $3$. With this observation in mind, let $A$ be a set consisting
of $N_3 - K$ vertices of degree 3 and
$N_k$ vertices $k$ (for $k\geq4$). Sample a vertex proportional to the
degree from $A$ and let $Y$ denote its
degree. Clearly, $X_i \preceq Y_i$, where $Y_i$ are independent
variables distributed as $Y$, and so
%
\begin{equation}\label{eq-d(S)-stoc-dom}
d_\K(S) \preceq\sum_{i=1}^K Y_i .
\end{equation}
By the definition of our exploration process,
\[
|\mathfrak{C}_{v, K}| \preceq\sum_{\ell_1 + \cdots+\ell_K =
K}\prod_{i=1}^{K} \pmatrix{Y_i\cr\ell_i} .
\]
We can now deduce that
%
\begin{equation}\label{eq-bound-T-v-K}\qquad
\E|\mathfrak{C}_{v, K}| \leq\E \left[ \sum_{\ell_1 + \cdots+\ell_K =
K} \prod_{i=1}^{K}
\pmatrix{Y_i\cr\ell_i} \right] = \sum_{\ell_1 + \cdots+\ell_K = K} \prod
_{i=1}^{K} \E
\left[\pmatrix{Y_i\cr\ell_i} \right] .
\end{equation}
For all $i \geq4$, we have
\[
\P(Y = i) \leq27 i \frac{(3\epsilon)^{i-3}\log(1/\epsilon)}{i!} =
27 \frac{(3 \epsilon)^{ i -3 }\log(1/\epsilon) }{(i -1)!}
\]
and therefore, for sufficiently large $n$ (recall that $\epsilon= o(1)$),
\[
\E \left[\pmatrix{Y\cr k} \right] \leq\pmatrix{3\cr k} + \sum_{i\geq4} \pmatrix{i\cr k}
\cdot27\frac{(3\epsilon)^{i-3}\log(1/\epsilon) }{(i-1)!} \leq
\frac{7}{k!}\qquad  \mbox{for all }k .
\]
Altogether,
%
\begin{equation}\label{eq-E-T-v-K}
\E|\mathfrak{C}_{v, K}| \leq7^K \sum_{\ell_1 + \cdots+\ell_K =
K}\prod_{i=1}^K \frac1{\ell_i !} .
\end{equation}
The next simple claim will provide a bound on the sum in the last expression.
\begin{claim}\label{claim-algebra}
$\!\!\!$The function $f(n) = \sum_{\ell_1 + \cdots+\ell_n = n} \prod
_{k=1}^{n} \frac{1}{\ell_k !}$ satisfies \mbox{$f(n) \leq
\mathrm{e}^n$}.
\end{claim}

\begin{pf}
The proof is by induction. For $n=1$, the claim trivially holds.
Assuming the hypothesis is valid for $n\leq m$, we get
\[
f(m+1) = \sum_{k=0}^{m+1} \frac{1}{k!} f(m - k) \leq\sum
_{k=0}^{m+1}\frac{\mathrm{e}^{m-k}}{k!} \leq\mathrm{e}^{m} \sum
_{k=0}^{m+1} \frac{1}{k!} \leq\mathrm{e}^{m+1} ,
\]
as required.
\end{pf}

Plugging the above estimate into \eqref{eq-E-T-v-K}, we conclude that
$\E|\mathfrak{C}_{v, K}| \leq(7
\mathrm{e})^K$. Now, Markov's inequality, together with a union bound
over all the vertices in the
kernel $\K$ yield the Part \hyperlink{item-lem-kernel-1}{(1)} of the lemma.

For Part \hyperlink{item-lem-kernel-2}{(2)}, notice that for any sufficiently
large $n$,
\[
\E\mathrm{e}^Y \leq\mathrm{e}^3 + \sum_{i\geq4} \mathrm{e}^i 27
i \frac{(3\epsilon)^{i-3}\log(1/\epsilon)}{i!} \leq25 ,
\]
therefore, \eqref{eq-d(S)-stoc-dom} gives that
\[
\P \bigl(d_\K(S) \geq30 \bigl(K \vee\log(\epsilon^3 n) \bigr) \bigr)\leq\exp\bigl[-5 \bigl(K
\vee\log(\epsilon^3 n) \bigr) \bigr] .
\]
At this point, the proof is concluded by a union bound over $\mathfrak
{C}_{v, K}$ for all $v \in\K$ and $K \leq\epsilon^3 n$, using the
upper bound we have already derived for $|\mathfrak{C}_{v, K}|$ in the
Part \hyperlink{item-lem-kernel-1}{(1)} of the lemma.
\end{pf}

\begin{lemma}\label{lem-conductance-1}
Let $\mathcal L\subset E(\K)$ be the set of loops in the kernel.
With high probability, every subset of vertices $S \subset\K$ forming
a connected subgraph of~$\K$
satisfies
$|\bar{E}^\star_{\cH}(S)| \leq (100/\epsilon) (|S| \vee\log
(\epsilon^3 n) )$,
and every subset $T$ of $\frac1{20}\epsilon^3 n$ edges in $\K$ satisfies
$|E_{\cH}^\star(T)\cup E_{\cH}^\star(\mathcal{L})| \leq\frac
34\epsilon^2 n$.
\end{lemma}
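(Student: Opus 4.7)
The plan is to exploit the description of $\tGC$ from Theorem~\ref{thm-struct-gen}: the $2$-core $\cH$ is obtained from the kernel $\K$ by independently replacing each edge $e$ by a path of random length $L_e \sim \Geom(1-\mu)$, with $1-\mu = (2+o(1))\epsilon$. Consequently
\[
|\bar{E}^\star_\cH(S)| = \sum_{e \in \bar{E}_\K(S)} L_e\,,\qquad |E_\cH^\star(T)| = \sum_{e \in T} L_e\,,
\]
so both assertions reduce to tail bounds on sums of i.i.d.\ geometric random variables, to be combined with the counting and volume estimates of Lemma~\ref{lem-kernel-size-volume} via a union bound.

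For the first part, I would condition on the conclusion of Lemma~\ref{lem-kernel-size-volume}, so every connected $S$ of size $K$ satisfies $|\bar{E}_\K(S)| \le d_\K(S) \le 30 K'$ for $K' := K \vee \log(\epsilon^3 n)$. A Chernoff bound using the moment generating function $\E e^{tL} = (1-\mu)e^t/(1-\mu e^t)$ evaluated at $t = -\tfrac12\log\mu \asymp \epsilon$ (where the MGF is bounded by an absolute constant) yields
\[\P\Big(\sum_{i=1}^{30K'} L_i > \tfrac{100}{\epsilon}K'\Big) \leq \exp(-CK'),\]
where $C$ can be made arbitrarily large (numerically $C\approx 80$) by exploiting the slack in the multiplicative constant $100$. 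A union bound over the at most $|V(\K)|\cdot \exp(5K')$ connected sets $S$ supplied by the first assertion of Lemma~\ref{lem-kernel-size-volume}, and then over $K \leq \epsilon^3 n$, closes this part, since $K'\ge \log(\epsilon^3 n)$ makes the union-bound prefactor polynomial in $\epsilon^3 n$ while the Chernoff bound is superpolynomially small.

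For the second part, note that $|E_\cH^\star(T)|$ is maximized over sets of size $m := \epsilon^3 n/20$ by taking $T$ to carry the top $m$ order statistics $L_{(1)}\ge L_{(2)}\ge\cdots$ of $(L_e)_{e\in E(\K)}$. The key deterministic inequality is
\[
\sum_{i=1}^{m} L_{(i)} \leq m c + \sum_{e \in E(\K)} (L_e - c)_+\,,
\]
valid for every threshold $c > 0$, since each top-$m$ variable either lies below $c$ or contributes its excess to the tail sum. With $e(\K)=(2+o(1))\epsilon^3 n$ and $1-\mu\asymp \epsilon$, I would choose $c = C_0/\epsilon$ for a moderate absolute constant $C_0$; then $mc = \tfrac{C_0}{20}\epsilon^2 n$ and, by memorylessness, $\E\sum_e (L_e - c)_+ = e(\K)\mu^c/(1-\mu) \asymp e^{-2C_0}\epsilon^2 n$. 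For $C_0$ slightly above $\log(40)/2\approx 1.85$ (say $C_0 = 3$), both terms sit comfortably below $\tfrac34\epsilon^2 n$, and a Chernoff bound on the tail sum (again via the MGF at scale $t\asymp\epsilon$) gives concentration with failure probability $\exp(-c''\epsilon^3 n)=o(1)$. Finally, $|E_\cH^\star(\mathcal L)|$ is handled by showing $|\mathcal L|=O_P(1)$ via standard configuration-model counts (the expected number of loops at a vertex of degree $d$ is $O(d^2/\sum_u d_u)$, summing to $O(1)$ for our degree sequence), so $|E_\cH^\star(\mathcal L)| = O_P(1/\epsilon) = o(\epsilon^2 n)$ since $\epsilon^2 n\to\infty$.

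The main delicate point is the calibration of $c$ in part two: one needs $\mu^c \ll m/e(\K)\approx 1/40$ so that the deterministic top-$m$ inequality is efficient (otherwise too many of the top-$m$ variables exceed $c$ and the bound is wasteful), forcing $c \gtrsim \log(40)/(2\epsilon)$; yet $mc$ together with the Chernoff residual and the loop contribution must all sit below $\tfrac34\epsilon^2 n$. The generous constants in the statement (the $100$ in part one and the pair $\tfrac34,\tfrac1{20}$ in part two) are chosen precisely to accommodate this calibration and the Chernoff-vs-union-bound balance of part one.
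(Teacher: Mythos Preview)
Your Part~1 is essentially the paper's argument: bound $|\bar E_\K(S)|\le 30K'$ via Lemma~\ref{lem-kernel-size-volume}, apply a Chernoff bound to the geometric sum at scale $t\asymp\epsilon$, and union-bound over the $\exp(5K')$ connected sets per vertex. (One slip: $1-\mu=(1+o(1))\epsilon$, not $(2+o(1))\epsilon$; the quantity equal to $(2+o(1))\epsilon$ is $\Lambda=1+\epsilon-\mu$. This only shifts your numerics and does not affect the argument.)

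Your Part~2 is correct but takes a genuinely different route from the paper. The paper performs a direct union bound over all $\binom{e(\K)}{m}$ edge-subsets of size $m=\tfrac1{20}\epsilon^3 n$, bounding each by the same Chernoff estimate and absorbing the binomial count via the entropy inequality; one then checks numerically that $2H(\tfrac{\alpha}2)+\alpha-\tfrac{\beta}2<0$ for $\alpha=\tfrac1{20}$, $\beta=\tfrac23$. Your argument instead observes that the worst $T$ is the one carrying the $m$ largest path-lengths, and bounds $\sum_{i\le m}L_{(i)}\le mc+\sum_e(L_e-c)_+$ for a threshold $c=C_0/\epsilon$; a single Chernoff bound on the tail sum (with no union bound) then suffices. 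This is cleaner and avoids the entropy calculation, at the cost of introducing the truncation parameter $c$ that must be calibrated. Both approaches handle $\mathcal L$ identically, via $\E|\mathcal L|=O(1)$ and hence $|E^\star_\cH(\mathcal L)|=O_P(1/\epsilon)=o(\epsilon^2 n)$. With the corrected $1-\mu\sim\epsilon$ you have $\mu^{C_0/\epsilon}\approx e^{-C_0}$ rather than $e^{-2C_0}$, so your stated threshold $C_0>\tfrac12\log 40$ should be $C_0>\log 40\approx 3.7$; any $C_0\in[2,10]$ still leaves ample room under $\tfrac34$.
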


\begin{pf}
Assume that the events given in Parts \hyperlink{item-lem-kernel-1}{(1)}, \hyperlink{item-lem-kernel-2}{(2)} of Lemma \ref{lem-kernel-size-volume} hold.
Further note that, by definition of the model $\tGC$, a standard
application of CLT yields that w.h.p.\
\[
|\K|=\bigl(\tfrac43+o(1)\bigr)\epsilon^3n ,\qquad  e(\cH) = \bigl(2+o(1)\bigr) \epsilon^2 n ,\qquad
e(\K) = \bigl(2+o(1)\bigr) \epsilon^3 n .
\]
By Part \hyperlink{item-lem-kernel-2}{(2)} of that lemma,
$d_\K(S) \leq30 (|S| \vee\log(\epsilon^3 n) )$ holds
simultaneously for every connected set $S$,
hence there are at most this many edges in $\bar{E}_\K(S)$.

Let $S\subset\K$ be a connected set of size $|S|=s$, and let
\[
K = K(s)= s \vee\log(\epsilon^3 n) .
\]
Recalling our definition of the graph
$\cH$, we deduce that
\[
|\bar{E}^\star_{\cH}(S) | \preceq\sum_{i = 1}^{30 K} Z_i ,
\]
where $Z_i$ are i.i.d.\ Geometric random variables with mean $\frac
{1}{1 - \mu}$.
It is well known that the moment-generating function of such variables
is given by
\[
\E( \mathrm{e}^{t Z_1}) = \frac{(1 - \mu)\mathrm{e}^t}{ 1 - \mu
\mathrm{e}^{t}} .
\]
Setting $t\!=\!\epsilon/2$ and recalling that $\mu\!=\!1\,{-}\,(1\,{+}\,o(1))
\epsilon$, we get that \mbox{$\E
(\mathrm{e}^{({\epsilon}/{2}) Z_1})\!\leq\!\mathrm{e}$} for
sufficiently large $n$ (recall that $\epsilon=
o(1)$). Therefore, we obtain that for the above mentioned $S$,
\[
\P\bigl(|\bar{E}^\star_{\cH}(S)| \geq(100/\epsilon) K \bigr) \leq\frac
{\exp(30 K)}{\exp((\epsilon/2) (100/\epsilon) K)} = \mathrm
{e}^{-20K} .
\]
By Part \hyperlink{item-lem-kernel-1}{(1)} of Lemma \ref
{lem-kernel-size-volume}, there are at most $(\frac43+o(1))\epsilon^3
n \exp(5K)$ connected sets of size $s$. Taking a union bound over the
$(\frac43+o(1))\epsilon^3 n$ values of $s$ establishes that the
statement of the lemma holds except with probability
\[
\biggl(\frac43 + o(1) \biggr)\epsilon^3 n \sum_s \mathrm{e}^{-20K(s)}\mathrm
{e}^{5K(s)} \leq
\biggl(\frac{16}9 + o(1) \biggr)(\epsilon^3 n)^{-13} = o(1) ,
\]
completing the proof of the statement on all connected subsets
$S\subset\K$.

Next, if $T$ contains $t$ edges in $\K$, then the number of
corresponding edges in $\cH$ is again stochastically dominated
by a sum of i.i.d.\ geometric variables~$\{Z_i\}$ as above. Hence, by
the same argument, the probability that there exists a set $T\subset
E(\K)$ of $\alpha\epsilon^3 n$ edges in $\K$, which expands to at
least $\beta\epsilon^2 n$ edges in $\cH$ for some $0<\alpha<\frac
12$ and $0<\beta< 1$, is at most
\[
\pmatrix{(2+o(1))\epsilon^3 n\cr\alpha\epsilon^3 n}\frac{\mathrm
{e}^{\alpha\epsilon^3 n}}{\mathrm{e}^{(\epsilon/2) \beta\epsilon
^2 n}} \leq\exp\biggl[ \biggl(2H \biggl(\frac{\alpha}2 \biggr)+\alpha-\frac{\beta
}2+o(1) \biggr) \epsilon^3 n \biggr]
\]
[using the well-known fact that $\sum_{i \leq\lambda m}{m\choose i}
\leq\exp[H(\lambda)m]$, where $H(x)$ is the entropy function $H(x) \deq- x\log
x - (1-x) \log(1-x)$].
It is now easy to verify that a choice of $\alpha=\frac1{20}$ and
$\beta=\frac23$ in the last expression yields
a term that tends to $0$ as $n\to\infty$.\vadjust{\goodbreak}

It remains to bound $|\mathcal{L}|$. This will follow from a bound on
the number of loops in $\K$.
Let $u\in\K$ be a kernel vertex, and recall that its degree $D_u$ is
distributed as an independent $ ( \operatorname{Poisson}(\Lambda) \given
\cdot\geq3 )$, where $\Lambda=(2+o(1))\epsilon$ with high probability.
The expected number of loops that $u$ obtains in a~random realization
of the degree sequence (via the configuration model) is
clearly at most $D_u^2 / D$, where $D = (4+o(1))\epsilon^3 n$ is the
total of the kernel degrees.
Therefore,
\[
\E|\mathcal{L}| \leq\bigl(\tfrac43 + o(1)\bigr)\epsilon^3 n\cdot(1/D) \E
[D_u^2] = O(1) ,
\]
and so $\E|E^\star_{\cH}(\mathcal{L})| = O(1/\epsilon)$. The
contribution of $|E^\star_{\cH}(\mathcal{L})|$ is thus easily
absorbed w.h.p.\ when increasing $\beta$ from $\frac23$ to $\frac
34$, completing the proof.
\end{pf}

\begin{lemma}\label{lem-conductance}
There exists an absolute constant $\iota> 0$ so that w.h.p. every
connected set $S \subset\cH$ with
$(200/\epsilon) \log(\epsilon^3 n)\leq d_{\cH}(S)\leq e(\cH)$
satisfies that
$|\partial_{\cH} S| / d_{\cH}(S)\geq\iota\epsilon$.
\end{lemma}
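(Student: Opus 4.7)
The plan is to project $S$ to the kernel by setting $T\deq S\cap V(\K)$ and push the kernel isoperimetry of Lemma~\ref{lem-kernel-expander} through the subdivision that produces $\cH$. My first step is to verify that $T$ is a nonempty connected subgraph of $\K$: the volume hypothesis, combined with the standard w.h.p.\ upper bound on the maximum 2-path length (which is of order $\epsilon^{-1}\log(\epsilon^3 n)$ by the $\Geom(1-\mu)$ length distribution), rules out $S$ lying entirely inside the interior of a single 2-path, so $T\neq\emptyset$; moreover, an internal vertex of $S$ lying on a 2-path $P_e$ has all its $\cH$-neighbors on $P_e$, and the connectedness of $S$ forces the $S$-component containing it to reach a kernel endpoint of $P_e$, which therefore belongs to $T$. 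Tracing any $\cH$-path in $S$ between two kernel vertices then produces a $\K$-walk through vertices of $T$, so $T$ is connected in $\K$.

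The reduction to the kernel is then straightforward. For each $uv\in\partial_\K T$ one has $u\in S$ and $v\notin S$, so the 2-path $P_{uv}$ must contain at least one $\cH$-boundary edge; distinct edges of $\partial_\K T$ give disjoint contributions, yielding $|\partial_\cH S|\ge |\partial_\K T|\ge\alpha\min\{d_\K(T),d_\K(T^c)\}$ via Lemma~\ref{lem-kernel-expander}. On the volume side, the endpoint-in-$T$ observation from the previous paragraph shows every internal vertex of $S$ lies on a 2-path whose underlying kernel edge is in $\bar{E}_\K(T)$; a short algebraic cleanup using $d_\K(T)=2|\bar{E}_\K(T)|-|\partial_\K T|$ then gives $d_\cH(S)\le 2|\bar{E}^\star_\cH(T)|$. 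When $|T|\ge\log(\epsilon^3 n)$, Lemma~\ref{lem-conductance-1} provides $|\bar{E}^\star_\cH(T)|\le(100/\epsilon)|T|$, and since $d_\K(T)\ge 3|T|$, the desired bound $|\partial_\cH S|/d_\cH(S)\ge 3\alpha\epsilon/200$ falls out in the non-degenerate regime $d_\K(T)\le e(\K)$; the opposite regime $d_\K(T)>e(\K)$ is handled by applying the same comparison to $T^c$ and extracting the internal-boundary contribution forced by $d_\cH(S)\le e(\cH)$.

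The delicate case is $|T|<\log(\epsilon^3 n)$, where a naive invocation of Lemma~\ref{lem-conductance-1} only produces $|\bar{E}^\star_\cH(T)|\le(100/\epsilon)\log(\epsilon^3 n)$, a bound that exactly matches the volume hypothesis and leaves $d_\K(T)$ potentially of constant order. The fix I plan is to sharpen Lemma~\ref{lem-conductance-1} for small $T$ via a Chernoff bound on the sum $\sum_{e\in\bar{E}_\K(T)}\ell_e$ of independent $\Geom(1-\mu)$ lengths (using the MGF estimate $\E\,\mathrm{e}^{\epsilon Z/2}\le\mathrm{e}$ already available from the proof of Lemma~\ref{lem-conductance-1}), followed by a union bound over the at most $\mathrm{poly}(\epsilon^3 n)$-many connected $T\subset V(\K)$ of size $\le\log(\epsilon^3 n)$, whose count is controlled by Lemma~\ref{lem-kernel-size-volume}(i). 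This yields with high probability a refined estimate of the form $|\bar{E}^\star_\cH(T)|\le\bigl(C_1\,d_\K(T)+C_2\log(\epsilon^3 n)\bigr)/\epsilon$ with $C_2<100$, so that inserting it into the volume hypothesis forces $d_\K(T)\ge c_0\log(\epsilon^3 n)$ for an absolute constant $c_0>0$; plugging this back into the earlier estimates produces the desired $|\partial_\cH S|/d_\cH(S)\ge\iota\epsilon$. This refinement, which is the main obstacle of the proof, is precisely what rules out the pathological configuration of a single low-degree kernel vertex that happens to be attached to anomalously long 2-paths.
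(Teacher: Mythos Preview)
Your overall skeleton---project to $T=S\cap\K$, compare $|\partial_\cH S|\ge|\partial_\K T|$ and $d_\cH(S)\le 2|\bar E^\star_\cH(T)|$, then combine kernel isoperimetry with Lemma~\ref{lem-conductance-1}---is precisely the paper's approach. But your identification of where the difficulty lies is off in two ways.

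First, the ``delicate case'' $|T|<\log(\epsilon^3 n)$ that you call the main obstacle is in fact handled by the paper in one line, with no refinement of Lemma~\ref{lem-conductance-1} needed. The point is to read that lemma in the contrapositive direction: from $d_\cH(S)\ge(200/\epsilon)\log(\epsilon^3 n)$ and $d_\cH(S)\le 2|\bar E^\star_\cH(T)|$ one gets $|\bar E^\star_\cH(T)|\ge(100/\epsilon)\log(\epsilon^3 n)$, and since Lemma~\ref{lem-conductance-1} says $|\bar E^\star_\cH(T)|\le(100/\epsilon)\bigl(|T|\vee\log(\epsilon^3 n)\bigr)$, the maximum must be $|T|$, i.e.\ $|T|\ge(\epsilon/100)|\bar E^\star_\cH(T)|$. (The constant $100$ in the proof of Lemma~\ref{lem-conductance-1} has ample slack if one is worried about the boundary.) So the pathological single-vertex configuration you describe is already excluded by the volume hypothesis, and your proposed sharper Chernoff bound, while correct, is unnecessary.

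Second, the regime $d_\K(T)>e(\K)$ is where you actually have a gap. Your plan to ``apply the same comparison to $T^c$'' does not work as stated, because $T^c$ need not be connected and the first clause of Lemma~\ref{lem-conductance-1} requires connectedness. What is needed is a lower bound on $d_\K(\K\setminus T)$ coming from $d_\cH(S)\le e(\cH)$, and the paper extracts this from the \emph{second} clause of Lemma~\ref{lem-conductance-1} (the statement about edge-subsets of size $\tfrac1{20}\epsilon^3 n$ together with loops), which has no connectedness hypothesis. Concretely, $d_\cH(S)\le e(\cH)$ forces $|E(\cH)\setminus E_\cH(S)|\ge\tfrac34\epsilon^2 n$, and since these edges are covered by $E^\star_\cH(\bar E_\K(\K\setminus T))\cup E^\star_\cH(\mathcal L)$, the second clause yields $|\bar E_\K(\K\setminus T)|\ge\tfrac1{20}\epsilon^3 n$, hence $d_\K(\K\setminus T)$ is at least a fixed fraction of $d_\K(T)$. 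Without this step the kernel isoperimetry bound $|\partial_\K T|\ge\alpha\min\{d_\K(T),d_\K(T^c)\}$ can degenerate.
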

\begin{pf}
Let $S\subset\cH$ be as above, and write $S_\K= S \cap\K$. Observe
that~$S_\K$ is connected (if nonempty).
Furthermore, since $d_\cH(S) \geq(200/\epsilon)\log(\epsilon^3 n)$
whereas the longest 2-path in $\cH$
contains $(1+o(1))(1/\epsilon)\log(\epsilon^3 n)$ edges w.h.p., we
may assume that $S_\K$ is indeed nonempty.

Next, clearly $|\partial_{\cH} S| \geq|\partial_\K S_\K|$ (as each
edge in the boundary of $S_\K$ translates
into a 2-path in $\cH$ with precisely one endpoint in $S$), while
$|\bar{E}_{\cH}(S)| \leq|\bar{E}^\star_{\cH}(S_\K)|$ (any $e \in
\bar{E}_\cH(S)$ belongs to some 2-path $\mathcal{P}_e$, which is necessarily
incident to some $v\in S_\K$ as, crucially, $S_\K$ is nonempty.
Hence, the edge corresponding to $\mathcal{P}_e$ belongs to $\bar
{E}_\K(S_\K)$, and so $e \in\bar{E}^\star_\cH(S_\K)$).
Therefore, using the fact that $d_\cH(S) \leq2|\bar{E}_\cH(S)|$,
%
\begin{equation}\label{eq-decompose-cheeger}
\frac{|\partial_{\cH} S|}{d_{\cH}(S)} \geq
\frac{|\partial_\K S_\K|}{2|\bar{E}^\star_{\cH}(S_\K)|} =
\frac{|\partial_\K S_\K|}{2|\bar{E}_\K(S_\K)|}\cdot
\frac{|\bar{E}_\K(S_\K)|}{|\bar{E}_{\cH}^\star(S_\K)|} .
\end{equation}
Assume that the events stated in Lemma \ref{lem-conductance-1} hold.
Since the assumption on~$d_\cH(S_\K)$ gives that
$|\bar{E}^\star_{\cH}(S_\K)| \geq(100/ \epsilon) \log
(\epsilon^3 n)$, we deduce that
necessarily
\[
|S_\K| \geq(\epsilon/100) |\bar{E}^\star_{\cH
}(S_\K)| ,
\]
and thus (since $S_\K$ is connected)
%
\begin{equation}\label{eq-cheeger-1}|\bar{E}_\K(S_\K)| \geq|E_\K
(S_\K)| \geq(\epsilon/100) |\bar{E}^\star_{\cH}(S_\K)|-1 .
\end{equation}
Now,
\[
d_{\cH}(S) \leq e(\cH) = \bigl(2+o(1)\bigr)\epsilon^2 n ,
\]
and since $d_{\cH}(S) = 2|E_\cH(S)|+|\partial_\cH S|$
we have $|E_\cH(S)| \leq(1+o(1))\epsilon^2 n$.
In particular,
$|E(\cH) \setminus E_{\cH}(S)| \geq\frac34 \epsilon^2 n$ for
sufficiently large $n$.

At the same time, if $\mathcal{L}$ is the set of all loops in $\K$
and $T=\bar{E}_\K(\K\setminus S_\K)$,
then clearly $E^\star_{\cH}(T) \cup E^\star_{\cH}(\mathcal{L})$ is
a superset of
$E(\cH) \setminus E_{\cH}(S)$.
Therefore, Lemma~\ref{lem-conductance-1} yields that $|T| \geq\frac
1{20}\epsilon^3 n$. Since $d_\K(S_\K) \leq2e(\K) =
(4+o(1))\epsilon^3 n$, we get
\[
d_\K(\K\setminus S_\K) \geq|T| \geq \frac{\epsilon^3 n}{20} \geq
\frac{1+o(1)}{80}d_\K(S_\K) .
\]
At this point, by
Lemma \ref{lem-kernel-expander} there exists $\alpha> 0$ such
that w.h.p.\ for any such above mentioned subset $S$:
%
\begin{equation}\label{eq-cheeger-2}
|\partial_\K S_\K| \geq\alpha \bigl(d_\K(S_\K) \wedge d_\K(\K
\setminus S_\K) \bigr) \geq\frac{\alpha+o(1)}{80} d_\K(S_\K) .
\end{equation}
Plugging \eqref{eq-cheeger-1}, \eqref{eq-cheeger-2} into \eqref
{eq-decompose-cheeger}, we
conclude that the lemma holds for any sufficiently large $n$ with, say,
$\iota= \frac12\cdot10^{-4}\alpha$.
\end{pf}

We are now ready to establish the upper bound on the mixing time for
the random walk on $\cH$.

\begin{pf*}{Proof of Theorem \ref{thm-core-mixing}}
We will apply the following recent result of~\cite{FR1}, which bounds
the mixing time of a lazy chain in terms of its isoperimetric profile
(a fine-tuned version of the Lov\'{a}sz--Kannan \cite{LK} bound on the
mixing time in terms of the average conductance).
\begin{theorem}[(\cite{FR1})]\label{thm-Fountoulakis-Reed}
Let $P = (p_{x, y})$ be the transition kernel of an irreducible,
reversible and aperiodic Markov chain on $\Omega$ with stationary
distribution~$\pi$.
Let $\pi_{\min} = \min_{x\in\Omega}\pi(x)$ and for $p>\pi_{\min
}$, let
\[
\Phi(p) \deq\min\{\Phi(S)\dvtx S \mbox{ is connected and } p/2 \leq
\pi(S) \leq p\} ,
\]
and $\Phi(p)=1$ if there is no such $S$. Then for some absolute
constant $C>0$,
\[
\tmixu\leq C \sum_{j=1}^{\lceil\log\pi_{\min}^{-1}\rceil} \Phi
^{-2}(2^{-j}) .
\]
\end{theorem}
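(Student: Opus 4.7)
The plan is to prove this via the Morris--Peres \emph{evolving sets} technique, refining the average-conductance approach of Lov\'asz--Kannan \cite{LK} by exploiting connectedness in the isoperimetric profile. Given the lazy reversible chain with kernel $P$, the evolving-set process $(S_t)_{t\geq 0}$ is defined by drawing an independent uniform $U_t \in (0,1)$ at each step and setting $S_{t+1} = \{y : Q(S_t,y) \geq U_t\pi(y)\}$, where $Q(x,y)=\pi(x)p(x,y)$. Standard facts (cf.\ \cite{LW}) are that $\pi(S_t)$ is a martingale and that $\|\P_x(X_t\in\cdot)-\pi\|_\mathrm{TV}$ is controlled by $\E[\sqrt{\pi(S_t)\pi(S_t^c)}]$ when $S_0=\{x\}$. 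Thus it suffices to show that $M_t \deq \sqrt{\pi(S_t)\wedge\pi(S_t^c)}$ is driven up to a constant within the time scale claimed.

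The first analytic input is the Morris--Peres root inequality: whenever $\pi(S)\leq\tfrac12$,
\[
\E\bigl[\sqrt{\pi(S_{t+1})}\,\big|\,S_t=S\bigr]\;\leq\;\sqrt{\pi(S)}\bigl(1-c\,\Phi(S)^{2}\bigr),
\]
for an absolute constant $c>0$, with the symmetric statement when $\pi(S)>\tfrac12$. I would then introduce the potential
\[
\Psi(S)\deq \sum_{j:\,2^{-j}\geq\pi(S)\wedge\pi(S^c)}\Phi^{-2}(2^{-j}),
\]
and show, via the root inequality applied in each dyadic band $p/2\leq\pi(S)\wedge\pi(S^c)\leq p$, that $\E[\Psi(S_{t+1})\mid S_t]\leq\Psi(S_t)-c'$ for an absolute $c'>0$ whenever $\pi(S_t)\notin[\tfrac14,\tfrac34]$. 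Iterating yields $\E\Psi(S_t)=O(1)$ once $t\asymp\sum_j\Phi^{-2}(2^{-j})$, and this translates to the Ces\`aro bound via the averaging framework of Subsection~\ref{subsec:mixing}.

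The genuinely new ingredient over \cite{LK} is the restriction of $\Phi(p)$ to \emph{connected} sets. The key observation is that $S_t$ decomposes as $S_t^{(1)}\sqcup\cdots\sqcup S_t^{(m)}$ into its maximal connected pieces within the transition graph of $P$; since $p(x,y)=0$ between distinct pieces, both the boundary mass and the stationary mass are additive,
\[
Q(S_t,S_t^c)=\sum_{i=1}^m Q(S_t^{(i)},S_t^c),\qquad
\pi(S_t)=\sum_{i=1}^m \pi(S_t^{(i)}).
\]
A bucketing argument groups the $S_t^{(i)}$ by the dyadic band containing their $\pi$-mass, and charges each bucket's contribution to the one-step drift of $\sqrt{\pi(S_{t+1})}$ against $\Phi(2^{-j})$ \emph{for connected sets} at the corresponding scale $j$. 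This is exactly what allows $\Phi(p)$ in the statement to quantify only over connected sets while still controlling the drift, yielding the sharper sum.

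The main obstacle will be executing this bucketing step cleanly. When the components of $S_t$ straddle many dyadic bands simultaneously, one has to apportion the Morris--Peres variance estimate across them so that every scale contributes independently to $\sum_j\Phi^{-2}(2^{-j})$ without a log loss. This likely requires a per-component refinement of the root inequality obtained by conditioning on $U_t$ and partitioning $S_{t+1}\triangle S_t$ according to which $S_t^{(i)}$ it touches, followed by a careful telescoping in $t$. The delicate case is when one component dominates $\pi(S_t)$ while several smaller components collectively carry most of the boundary: here one must be sure that the smaller components are charged to the connected-set profile at their own scale, rather than at the (much coarser) scale of $\pi(S_t)$, which is precisely where the refinement over \cite{LK} is bought.
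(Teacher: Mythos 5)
You should first note that the paper does not prove Theorem~\ref{thm-Fountoulakis-Reed} at all: it is quoted from \cite{FR1}, whose argument is a refinement of the Lov\'asz--Kannan average-conductance proof (tracking the connected components of level sets of the time-$t$ density), not an evolving-set argument. So your proposal is an attempt at an independent proof, and it has a genuine gap at precisely the point that constitutes the content of the theorem: justifying that the profile may be taken over \emph{connected} sets only.

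Concretely, the evolving set $S_t$ is in general disconnected even when started from a singleton, so you cannot bound $\Phi(S_t)$ below by $\Phi(2^{-j})$ for the dyadic band containing $\pi(S_t)$; moreover the banded, connected-only profile in the statement is not monotone in $p$, and monotonicity of the all-sets profile is exactly what the Morris--Peres scheme uses to integrate over scales without double counting. Your component decomposition $S_t=S_t^{(1)}\sqcup\cdots\sqcup S_t^{(m)}$ gives, via additivity of $Q$ and $\pi$, only that $\Phi(S_t)$ is at least the $\pi$-weighted average of the $\Phi(S_t^{(i)})$, so the one-step gain in the root inequality is governed by the connected profile at the scales of the individual components, which may lie far below the scale of $\pi(S_t)$. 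But your potential $\Psi$ depends only on the total mass $\pi(S_t)$: it registers progress only when $\pi(S_t)$ crosses its own dyadic thresholds, and since $\pi(S_t)$ is a martingale it recrosses bands many times. Nothing in the sketch shows the claimed drift $\E[\Psi(S_{t+1})\mid S_t]\leq\Psi(S_t)-c'$, nor prevents a single scale $j$ from being charged far more than $\Phi^{-2}(2^{-j})$ over the run, in the very case you flag (components at scales where the connected profile is small while the profile at the total-mass scale is large, or conversely). The ``per-component refinement of the root inequality followed by careful telescoping'' is thus not a technicality to be cleaned up later; it is the theorem, and it is not clear it can be carried out inside the evolving-set framework at all. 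A secondary, fixable issue: the root inequality with an absolute constant requires laziness (a uniform lower bound on holding probabilities), while the statement assumes only aperiodicity; one would need to pass to $(I+P)/2$, note that conductances change only by a factor of $2$, and invoke Theorem~\ref{thm-Cesaro-mixing-time} to return to the original chain.
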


In our case, the $P$ is the transition kernel of the lazy random walk
on $\cH$.
By definition, if $S\subset\cH$ and $d_\cH(x)$ denotes the degree of
$x\in\cH$, then
\[
\pi_\cH(x) = \frac{d_\cH(x)}{2e(\cH)} ,\qquad  p_{x,y} = \frac1{2d_\cH
(x)} ,\qquad  \pi_\cH(S) = \frac{d_\cH(S)}{2e(\cH)} ,
\]
and so $\Phi(S) \geq\frac12 |\partial_\cH S| /d_\cH(S)$.
Recall that w.h.p.\ $e(\cH) = (2+o(1)) \epsilon^2
n$. Under this assumption, for any $p \geq120\frac{
\log(\epsilon^3 n)}{\epsilon^3 n}$ and connected subset $S \subset
\cH$ satisfying $\pi_{\cH}(S) \geq p/2$,
\[
d_{\cH}(S) = 2 \pi_\cH(S) e(\cH) \geq(200/ \epsilon) \log
(\epsilon^3 n) .
\]
Therefore, by Lemma \ref{lem-conductance}, w.h.p.
%
\begin{equation}\label{eq-Phi-p-bound}
\Phi(p) \geq\frac12\iota \epsilon \qquad \mbox{for all } 120 \frac{\log(\epsilon^3 n)}{\epsilon
^3 n} \leq p \leq\frac12 .
\end{equation}
Set
\[
j^* = \max \biggl\{j\dvtx 2^{-j} \geq120\frac{ \log(\epsilon^3 n)}{\epsilon^3 n} \biggr\} .
\]
It is clear that $j^* = O(\log(\epsilon^3 n))$ and \eqref
{eq-Phi-p-bound} can be translated into
%
\begin{equation}\label{eq-Phi-j-small}\Phi(2^{-j}) \geq\tfrac
12\iota\epsilon,\qquad  \mbox{for all } 1 \leq j \leq j^* .
\end{equation}
On the other hand, if $\pi_\cH(S)\leq p < 1$ then $d_\cH(S) \leq
2pe(\cH)$ while \mbox{$|\partial_\cH S|\geq1$} (as $\cH$ is connected),
and so
the inequality $\Phi(S) \geq\frac12 |\partial_\cH S| /d_\cH(S)$ gives
$\Phi(S) \geq1/(4 p e(\cH))$. Substituting $p = 2^{-j}$ with $j \leq
\lceil\log\pi_{\min}^{-1}\rceil$, we have
%
\begin{equation}\label{eq-Phi-j-large}
\Phi(2^{-j}) \geq
\frac{2^{j-2}}{e(\cH)}\geq\frac{2^j}{10\epsilon^2 n}
\end{equation}
(where the last inequality holds for large $n$).
Combining \eqref{eq-Phi-j-small} and \eqref{eq-Phi-j-large} together,
we now apply Theorem \ref{thm-Fountoulakis-Reed} to conclude that
there exists a constant $C > 0$ such that, w.h.p.,
\begin{eqnarray*}
\tmixu& \leq& C \sum_{j=1}^{\lceil\log\pi_{\min}^{-1}\rceil}
\frac{1}{\Phi^2(2^{-j})}= C \Biggl[\sum_{j=1}^{j^*} \frac{1}{\Phi
^2(2^{-j})} + \sum_{j= j^*}^{\lceil\log\pi_{\min}^{-1}\rceil}
\frac{1}{\Phi^2(2^{-j})} \Biggr]\\
&\leq& C \biggl(j^* \biggl(\frac12\iota\epsilon\biggr)^{-2} + 2 (10 \epsilon^2 n
\cdot2^{-j^*})^2 \biggr)= O(\epsilon^{-2} \log^2 (\epsilon^3 n)) ,
\end{eqnarray*}
where the last inequality follows by our choice of $j^*$.

The lower bound on the mixing time follows immediately from the fact
that, by the definition of $\tGC$, w.h.p.\ there exists a 2-path in
$\cH$ whose length is $(1-o(1))(1/\epsilon) \log
(\epsilon^3 n)$ (see \cite{DKLP1}, Corollary 1).
\end{pf*}

\subsection{Local times for the random walk on the 2-core}
In order to extend the mixing time from the 2-core $\cH$ to
the giant component, we need to prove the following proposition.

\begin{proposition}\label{prop-expected-visits}
Let $N_{v,s}$ be the local time induced by the lazy random walk $(W_t)$
on $\cH$ to the vertex $v$ up to time $s$, that is,
\mbox{$\#\{ 0 \leq t \leq s \dvtx W_t = v\}$}. Then there exists some $C\,{>}\,0$ such
that, w.h.p., for all $s\,{>}\,0$ and any \mbox{$u,v\!\in\!\cH$},
\[
\E_u[N_{v,s}] \leq C \frac{\epsilon s}{\log(\epsilon^3 n)} +
(150/\epsilon) \log(\epsilon^3 n) .
\]
\end{proposition}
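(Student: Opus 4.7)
The plan is to reduce the bound over a general interval $[0,s]$ to one over a single window of length $T := c_0 \tmix \asymp \epsilon^{-2}\log^2(\epsilon^3 n)$. By strong Markov at the first hitting time of $v$, $\E_u[N_{v,s}] \leq \E_v[N_{v,s}]$, and by strong Markov at the start of each of the $\lceil s/T\rceil$ time-blocks of length $T$,
\[
\E_v[N_{v,s}] \leq (s/T + 1)\cdot\sup_{w\in\cH} \E_w[N_{v,T}] \leq (s/T+1)\,\E_v[N_{v,T}].
\]
Setting $L := (1+o(1))\epsilon^{-1}\log(\epsilon^3 n)$, the w.h.p.\ longest 2-path length in $\cH$ from \cite{DKLP1}*{Corollary~1}, one has $T \asymp L^2$, so the proposition reduces to establishing $\E_v[N_{v,T}] \leq c_1 L$ for an absolute constant $c_1$; indeed, this then gives $c_1(sL/T + L) = \Theta(\epsilon s /\log(\epsilon^3 n)) + \Theta(\epsilon^{-1}\log(\epsilon^3 n))$, matching the two terms in the claim.

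\textbf{Bounding $\E_v[N_{v,T}]$.} Writing $\E_v[N_{v,T}] = \sum_{t=0}^T P^t(v,v)$, I would split the sum at $t=L^2$. For short times $t\leq L^2$, the target is the on-diagonal heat-kernel bound $P^t(v,v) \leq C/\sqrt t$. While the walker on $\cH$ has not yet exited the 2-path containing $v$, it may be coupled with a lazy SRW on $\Z$, giving the contribution of those trajectories at most $P^t_\Z(0,0)= O(1/\sqrt t)$; summed, $\sum_{t=0}^{L^2} O(1/\sqrt t) = O(L)$. For $t\in[L^2,T]$, I would use that $t\mapsto P^t(v,v)$ is nonincreasing for lazy reversible chains (since $P$ has spectrum in $[0,1]$, the spectral expansion $P^t(v,v)/\pi(v) = 1 + \sum_{i\geq 1} \lambda_i^t \phi_i(v)^2$ is nonincreasing in $t$), so $P^t(v,v) \leq P^{L^2}(v,v) = O(1/L)$ and $\sum_{t=L^2}^T P^t(v,v) \leq (T-L^2)\cdot O(1/L) = O(L)$ using $T \asymp L^2$. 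Adding the two ranges yields $\E_v[N_{v,T}] = O(L)$, as required.

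\textbf{Main obstacle.} The delicate ingredient is making the heat-kernel bound $P^t(v,v) = O(1/\sqrt t)$ rigorous for the full range $t \leq L^2$. The $\Z$-coupling only controls the contribution from trajectories that stay on the 2-path through time $t$; the complementary contribution, from trajectories that exit the 2-path, wander in the kernel, and return to $v$, demands a separate argument. One option is to derive a Nash-type inequality for the lazy chain on $\cH$ from the linear local volume growth (each $v$ has $\Omega(r)$ vertices within graph distance $r$ for $r\leq L$, provided by the 2-path attached to $v$), which yields $P^t(v,v) = O(1/\sqrt t)$ as a generic on-diagonal estimate. A more combinatorial alternative is an excursion decomposition: each re-entry into the 2-path of $v$ (via an adjacent kernel endpoint) contributes at most $O(1)$ expected visits to $v$, by the Green's-function bound $G_{[0,L]}(1,\cdot)\leq 2$ for a lazy SRW on a path entering from the neighbor of an endpoint, and the expansion estimate of Lemma~\ref{lem-conductance} together with the mixing bound of Theorem~\ref{thm-core-mixing} can be used to show that at most $o(L)$ such re-entries occur in a window of length $L^2$. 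Either route crucially leans on the structural theorem for $\cH$ from \cite{DKLP1} and the expansion bounds established earlier in this section.
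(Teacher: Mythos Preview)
Your block decomposition and the reduction to bounding $\E_v[N_{v,T}]$ with $T\asymp\epsilon^{-2}\log^2(\epsilon^3 n)$ are correct and match the paper's argument (the paper sets $t_c=c\epsilon^{-2}\log^2(\epsilon^3 n)$ and uses $\E_v N_{v,s}\le\lceil s/t_c\rceil\E_v N_{v,t_c}$ together with $\E_u N_{v,s}\le\E_v N_{v,s}$). The divergence is entirely in how the one-window bound is obtained, and this is where your proposal has a genuine gap.

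The paper does \emph{not} pursue a heat-kernel bound. Instead it proves directly that $\P_v(\tau_v^+\ge t_c)\ge\epsilon/(150\log(\epsilon^3 n))$, which by the geometric-tail argument gives $\E_v N_{v,t_c}\le(150/\epsilon)\log(\epsilon^3 n)$. The escape probability is obtained in two structural steps: Lemma~\ref{lem-new-edges-distr} builds, around every kernel vertex $v$ and for each of its $D_v$ neighbours, disjoint connected sets of size $(\epsilon^3 n)^{1/5}$ and small diameter; Lemma~\ref{lem-conductance-G} then shows that each such set must contain an edge from $\mathcal{E}$ (a kernel edge whose 2-path has length $\ge\frac{1}{20\epsilon}\log(\epsilon^3 n)$), giving $D_v$ disjoint short paths from $v$ to medians of long 2-paths, hence $C_{\mathrm{eff}}(v\leftrightarrow\mathcal{E}_v)/D_v\ge\epsilon/(100\log(\epsilon^3 n))$. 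A gambler's-ruin bound on the long 2-path then shows the walk is delayed $\Omega(\epsilon^{-2}\log^2(\epsilon^3 n))$ with probability $2/3$ once it reaches $\mathcal{E}_v$.

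Your route instead hinges on $P^t(v,v)=O(1/\sqrt{t})$ uniformly in $v\in\cH$ and $t\le L^2$, and you correctly flag this as the obstacle. But neither of your suggested fixes is close to complete. The $\Z$-coupling controls only the trajectories confined to the 2-path of $v$ and says nothing when $v$ is itself a kernel vertex; in particular, for $v\in\K$ with $D_v$ as large as $\log(\epsilon^3 n)$, one cannot ignore the non-path contribution, and indeed the paper's Lemmas~\ref{lem-new-edges-distr}--\ref{lem-conductance-G} are there precisely to handle this case. The Nash-inequality route would require a uniform linear volume-growth bound $|B_\cH(v,r)|\ge cr$ for all $v$ and all $r\le L$, which is not established anywhere in the paper and is not immediate (short 2-paths incident to $v$ force you into the kernel quickly, where you need to control the lengths of the next layer of 2-paths). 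Your excursion alternative asserts that Lemma~\ref{lem-conductance} and Theorem~\ref{thm-core-mixing} yield at most $o(L)$ re-entries to the 2-path of $v$ in time $L^2$, but neither result gives such a statement: Lemma~\ref{lem-conductance} is a conductance lower bound for sets of volume $\ge(200/\epsilon)\log(\epsilon^3 n)$, and Theorem~\ref{thm-core-mixing} bounds the mixing time, not short-time return statistics to a single 2-path. As written, the crucial inequality $\E_v[N_{v,T}]=O(L)$ remains unproved.
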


In order to prove Proposition \ref{prop-expected-visits}, we wish to
show that with positive probability the random walk $W_t$\vadjust{\goodbreak}
will take an excursion in a long 2-path before returning to $v$.
Consider some $v \in\K$ (we will later extend this analysis to the
vertices in $\cH\setminus\K$, i.e., those vertices lying on 2-paths).
We point out that proving this statement is simpler in case $D_v =
O(1)$, and most of the technical
challenge lies in the possibility that $D_v$ is unbounded. In order to
treat this point, we first show that the
neighbors of vertex $v$ in the kernel are, in some sense, distant apart.

\begin{lemma}\label{lem-new-edges-distr}
For $v\in\K$ let $\mathcal{N}_v$ denote the set of neighbors of $v$
in the kernel $\K$. Then w.h.p., for every $v \in
\K$ there exists a collection of disjoint connected subsets $\{ B_w(v)
\subset\K\dvtx w \in\mathcal{N}_v\}$, such that for all $w\in
\mathcal{N}_v$,
\[
|B_w| = \lceil(\epsilon^3 n)^{1/5} \rceil \quad \mbox{and}\quad  \diam(B_w)
\leq\tfrac12\log(\epsilon^3 n) .
\]
\end{lemma}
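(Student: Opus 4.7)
The plan is to construct the sets $B_w(v)$ as the sectors of a breadth-first search (BFS) from $v$ in $\K$, truncated at a carefully chosen depth. Sectors rooted at distinct depth-one neighbors of $v$ are vertex-disjoint by construction, and each is connected through the BFS tree; moreover, every vertex in the sector rooted at $w$ lies at distance at most $r$ from $w$, so the sector has diameter at most $2r$. Choosing $r$ slightly above $\tfrac{1}{5}\log_2(\epsilon^3 n)$ then gives $2r\leq \tfrac12\log(\epsilon^3 n)$, handling the diameter requirement immediately; truncating each sector to its first $K:=\lceil(\epsilon^3 n)^{1/5}\rceil$ vertices in BFS order preserves connectedness and delivers the required size.

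First I would condition on the high-probability structural events of Lemmas \ref{lem-kernel-size-volume} and \ref{lem-kernel-expander}: $|\K|=(\tfrac43+o(1))\epsilon^3 n$, $e(\K)=(2+o(1))\epsilon^3 n$, max kernel degree $\leq 30\log(\epsilon^3 n)$ (so in particular $D_v\leq 30\log(\epsilon^3 n)$ for every $v$), and isoperimetric number bounded below by some $\alpha>0$. Using the configuration-model representation, I would reveal the half-edges incident to $v$ to obtain $\mathcal{N}_v$, and then grow the BFS in parallel from every $w\in\mathcal{N}_v$ by pairing half-edges one at a time. On the event that the BFS ball of radius $r$ induces a tree in $\K$, the minimum degree $\geq 3$ forces a branching factor of at least $2$ at every internal vertex of the tree, so a deterministic binary-tree count yields $|B_w|\geq 2^{r-1}\geq K$ for each sector.

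The remaining technical step is to establish the tree-likeness of the BFS ball uniformly over all $v\in\K$. Since the total exploration volume from a single $v$ is $O(K\cdot D_v)=O((\epsilon^3 n)^{1/5}\log(\epsilon^3 n))$, and each pairing in the configuration model closes a cycle with probability $O((\epsilon^3 n)^{-4/5}\log(\epsilon^3 n))$, the expected number of cycle-creating pairings per $v$ is $O(\log^2(\epsilon^3 n)\cdot(\epsilon^3 n)^{-3/5})$. The hard part will be closing the union bound over $|V(\K)|=\Theta(\epsilon^3 n)$: the first-moment estimate alone falls short by a polynomial factor. I would handle this by either (i) exploiting the isoperimetric expansion from Lemma \ref{lem-kernel-expander} to reroute each BFS around the rare cycle-creating half-edges, since $|\partial T|\geq 3\alpha|T|$ dwarfs the combined volume $O(\log(\epsilon^3 n)\cdot K)$ of the other sectors and so $O(1)$ reroutings suffice at an $O(1)$ cost in depth, or (ii) deriving an exponential-moment (Azuma-type) concentration bound for the number of cycles in a single exploration to drive the per-$v$ failure probability below $(\epsilon^3 n)^{-1-\delta}$, closing the union bound directly.
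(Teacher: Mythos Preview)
Your overall strategy---grow BFS sectors from each $w\in\mathcal{N}_v$, use the configuration model to control collisions, and exploit minimum degree $\geq 3$ for binary-tree growth---is exactly the paper's approach. The difference is in how the union bound is closed, and here you have made the problem harder than it is.

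You require the explored neighborhood to be a \emph{tree} (tree-excess $R=0$), correctly compute $\P(R\geq 1)\asymp (\epsilon^3 n)^{-3/5+o(1)}$, and then note this does not survive a union bound over $\Theta(\epsilon^3 n)$ vertices. Your option~(ii) cannot fix this: no concentration inequality can push $\P(R\geq 1)$ below its actual value, so if ``failure'' means ``some cycle appears'' you will never reach $(\epsilon^3 n)^{-1-\delta}$. Your option~(i) may be salvageable but is vague and unnecessary.

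The paper's observation is that you can tolerate tree-excess $R\leq 1$. With at most one extra edge in the entire explored set $\{v\}\cup\bigcup_w B_w$, every frontier vertex still has at least $3-1-1=1$ fresh neighbor, so no $B_w$ ever stalls before reaching size $K$; and the diameter of each $B_w$ is at most that of a binary tree on $K$ vertices plus the single extra edge, i.e.\ $\tfrac15\log_2(\epsilon^3 n)+O(1)\leq\tfrac12\log(\epsilon^3 n)$. Since $R$ is stochastically dominated by a binomial with mean $(\epsilon^3 n)^{-3/5+o(1)}$, the elementary bound
\[
\P(R\geq 2)\leq \binom{m}{2}p^2=(\epsilon^3 n)^{-6/5+o(1)}
\]
already beats the union bound over $|\K|$. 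So the ``hard part'' disappears once you relax from $R=0$ to $R\leq 1$.
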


\begin{pf}
We may again assume \eqref{eq-N-k} and furthermore, that
\[
3 \leq D_v \leq\log(\epsilon^3 n)\qquad \mbox{for all }v\in\K.
\]
Let $v\in\K$.
We construct the connected sets $B_w$ while we reveal the structure of
the kernel $\K$ via the configuration model, as follows:
Process the vertices $w \in\mathcal{N}_v$ sequentially according to
some arbitrary order.
When processing such a vertex $w$, we expose the ball (according to the
graph metric) about it, excluding $v$ and any vertices that were
already accounted for, until its size reaches $ \lceil(\epsilon^3
n)^{1/5} \rceil$ (or until no additional new vertices can be
added).\looseness=1

It is clear from the definition that the $B_w$'s are indeed disjoint
and connected,
and it remains to prove that each $B_w$ satisfies $|B_w| = \lceil
(\epsilon^3 n)^{1/5} \rceil$ and $\diam(B_w)
\leq\log(\epsilon^3 n)$.

Let $R$ denote the tree-excess of the (connected) subset $\{v\} \cup
\bigcup_{w} B_w$ once the process is concluded.
We claim that w.h.p.\ $R \leq1$. To see this, first observe that at
any point in the above process, the sum of degrees of all the vertices
that were already exposed (including $v$ and $\mathcal{N}_v$) is at most
\[
\lceil(\epsilon^3 n)^{1/5} \rceil \log^2 (\epsilon^3 n) =
(\epsilon^3n)^{1/5+o(1)} .
\]
Hence, by the definition of the configuration model (which draws a new
half-edge between $w$ and some other vertex proportional to its degree),
$R\preceq Z$ where $Z$ is a binomial variable $\Bin((\epsilon^3
n)^{1/5+o(1)} , (\epsilon^3n)^{-4/5+o(1)} )$. This gives
\[
\P(R \geq2) = (\epsilon^3 n)^{-6/5+o(1)} .
\]
In particular, since $D_w\geq3$ for any $w \in\K$, this implies that
we never fail to grow $B_w$ to size $(\epsilon^3 n)^{1/5}$, and that
the diameter of each $B_w$ is at most that of a binary tree (possibly
plus $R\leq1$), that is, for any large $n$,
\[
\diam(B_w) \leq \tfrac{1}{5}\log_2(\epsilon^3 n) +2 \leq\tfrac
12\log(\epsilon^3 n) .
\]
A simple union bound over $v\in\K$ now completes the proof.
\end{pf}

We distinguish the following subset of the edges of the kernel, whose
paths are suitably long:
\[
\mathcal{E} \deq \biggl\{e \in E(\K) \dvtx |\mathcal{P}_e| \geq \frac
{1}{20\epsilon}\log(\epsilon^3 n) \biggr\} ,
\]
where $\mathcal{P}_e$ is the 2-path in $\cH$ that corresponds to the
edge $e \in E(\K)$. Further define
$\mathcal{Q} \subset2^\K$ to be all the subsets of vertices of $\K$
whose induced subgraph contains an edge from $\mathcal{E}$:
\[
\mathcal{Q} \deq \{S \subset\K\dvtx E_\K(S) \cap\mathcal{E} \neq
\varnothing\} .
\]
For each $e \in\K$, we define the median of its $2$-path, denoted by
$\operatorname{med}(\mathcal{P}_e)$, in the obvious manner: It is the
vertex $w\in\mathcal{P}_e$ whose distance from the two endpoints is
the same, up to at most $1$ (whenever there are two choices for this
$w$, pick one arbitrarily). Now, for each $v\in\cH$ let
\[
\mathcal{E}_v \deq \{ \operatorname{med}(\mathcal{P}_e) \dvtx e \in\mathcal
{E} , v \notin\mathcal{P}_e\} .
\]
The next lemma provides a lower bound on the effective conductance
between a vertex $v$ in the 2-core and its corresponding above defined
set $\mathcal{E}_v$. See, for example, \cite{LP} for further details on
conductances/resistances.

\begin{lemma}\label{lem-conductance-G}
Let $C_{\mathrm{eff}}(v \leftrightarrow\mathcal{E}_v)$ be the
effective conductance between a~vertex $v\in\cH$ and the set
$\mathcal{E}_v$. With high probability, for any $v \in\cH$,
\[
C_{\mathrm{eff}}(v \leftrightarrow\mathcal{E}_v)/D_v \geq \epsilon
/(100\log(\epsilon^3 n)) .
\]
\end{lemma}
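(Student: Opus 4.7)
The plan is to interpret $\cH$ as an electrical network with unit conductance on every edge, and to identify all of $\mathcal{E}_v$ into a single sink $T$, so that the claim is equivalent to showing $R_{\mathrm{eff}}(v\to T) \leq 100\log(\epsilon^3 n)/(D_v\epsilon)$. We establish this upper bound via Rayleigh monotonicity by restricting to a carefully chosen sub-network: a parallel combination of explicit paths from $v$ to $T$.

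Consider first $v\in\K$. Lemma~\ref{lem-new-edges-distr} supplies pairwise disjoint connected sets $B_w\subset\K\setminus\{v\}$ for $w\in\mathcal N_v$, each of size $\lceil(\epsilon^3 n)^{1/5}\rceil$ and kernel-diameter at most $\tfrac12\log(\epsilon^3 n)$. The key probabilistic input is: w.h.p., uniformly in $v\in\K$ and $w\in\mathcal N_v$, the set $B_w$ contains a vertex $u_w$ incident in $\K$ to some edge $e_w\in\mathcal E$ whose second endpoint is not $v$. A kernel vertex of degree $d\geq 3$ has an incident edge in $\mathcal E$ with probability at least $d\mu^{(1/20\epsilon)\log(\epsilon^3 n)}/2\geq(\epsilon^3 n)^{-1/20+o(1)}$; since the 2-path lengths are i.i.d.\ $\Geom(1-\mu)$ and independent of the kernel structure, Chernoff yields $(\epsilon^3 n)^{3/20+o(1)}$ such candidate vertices in $B_w$ w.h.p., and a union bound over the $O(\epsilon^3 n)$ pairs $(v,w)$ closes the claim (excluding at most $D_v\leq\log(\epsilon^3 n)$ edges incident to $v$ is negligible).

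With such $u_w$ in hand, restrict attention to the sub-network consisting of the $D_v$ disjoint branches indexed by $w\in\mathcal N_v$, where the $w$-th branch is the concatenation of (i) the 2-path $v\!\to\!w$ of length $L_{vw}$, (ii) a fixed kernel path from $w$ to $u_w$ inside $B_w$ of at most $\tfrac12\log(\epsilon^3 n)$ kernel edges, and (iii) the sub-path of $\mathcal{P}_{e_w}$ from $u_w$ to $\mathrm{med}(\mathcal P_{e_w})\in\mathcal E_v$. The moment-generating-function calculation for sums of $\Geom(1-\mu)$ variables from the proof of Lemma~\ref{lem-conductance-1}, applied uniformly over the chosen paths in (ii), shows that their $\cH$-length is at most $O(\log(\epsilon^3 n)/\epsilon)$ w.h.p.; meanwhile $L_{vw}\leq(1+o(1))\epsilon^{-1}\log(\epsilon^3 n)$ by the diameter bound of \cite{DKLP1}, and (iii) contributes at most $\log(\epsilon^3 n)/(2\epsilon)$. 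Writing $R_w$ for the resistance of the (ii)+(iii) segment, each branch has total resistance $L_{vw}+R_w=O(\log(\epsilon^3 n)/\epsilon)$, and combining the $D_v$ branches in parallel gives
\[
R_{\mathrm{eff}}(v\to T)\;\leq\;\Bigl(\sum_{w\in\mathcal N_v}\frac{1}{L_{vw}+R_w}\Bigr)^{-1}\;\leq\;\frac{\max_{w}(L_{vw}+R_w)}{D_v}\;=\;O\!\left(\frac{\log(\epsilon^3 n)}{D_v\,\epsilon}\right).
\]
For $v\in\cH\setminus\K$ we have $D_v=2$ and $v$ lies on some 2-path $\mathcal P_e$ with $\K$-endpoints $u_1,u_2$; split the flow between the two sides of $\mathcal P_e$, route each half to the corresponding $u_i$, and concatenate with the single-branch construction above started at some $w'\in\mathcal N_{u_i}$ (while excluding the at most one edge whose 2-path equals $\mathcal P_e$, which only omits one candidate out of $(\epsilon^3 n)^{\Omega(1)}$). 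This again gives two parallel branches of resistance $O(\log(\epsilon^3 n)/\epsilon)$, matching the required bound since $D_v=2$. The main obstacle throughout is the uniform high-probability control needed to bound every branch length simultaneously across all $O(\epsilon^3 n)$ vertices and all relevant kernel paths inside each $B_w$; this is handled by combining the $\Geom(1-\mu)$ concentration from Lemma~\ref{lem-conductance-1} with the size and diameter bounds on $B_w$ from Lemma~\ref{lem-kernel-size-volume}.
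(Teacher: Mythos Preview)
Your approach is essentially the same as the paper's: both use the disjoint balls $B_w$ from Lemma~\ref{lem-new-edges-distr}, argue that each contains an edge of $\mathcal E$, and then invoke Lemma~\ref{lem-conductance-1} to control the $\cH$-length of the resulting $D_v$ paths. Two minor points where the paper is slightly cleaner: first, it asks for an \emph{internal} edge $e_w\in E_\K(B_w)\cap\mathcal E$ (there are at least $|B_w|-1$ candidates, still enough for the union bound), which immediately forces the $e_w$'s for different $w$ to be distinct and hence the terminal segments along $\mathcal P_{e_w}$ to be disjoint; your formulation via edges merely incident to some $u_w\in B_w$ does not exclude a single $e_w$ bridging two balls. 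Second, for the length of segment~(ii) it is safer to cite Lemma~\ref{lem-conductance-1} itself (which already holds simultaneously for all connected $S\subset\K$, so the dependence of the chosen kernel path on the $2$-path lengths is harmless) rather than rerun the MGF calculation on a path whose identity depends on those same lengths.
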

\begin{pf}
In order to bound the effective conductance, we need to prove that for
any $v \in\K$, there exist $D_v$ disjoint paths of length at most
$(100/\break\epsilon) \log(\epsilon^3 n)$ leading to the set $\mathcal
{E}_v$. By Lemmas \ref{lem-conductance-1} and \ref
{lem-new-edges-distr}, it suffices to prove that w.h.p.\ for any $v\in
\K$ and $w \in\mathcal{N}_v$, we have that $E(B_w) \cap\mathcal{E}
\neq\varnothing$, where~$\mathcal{N}_v$ and $B_w$ are defined as in
Lemma \ref{lem-new-edges-distr} (in this case, the path from $v$ to
some $e \in\mathcal{E}$ within $B_w$ will have length at most $\frac
12\log(\epsilon^3n)$ in $\K$, and its length will not be exceed
$(100/\epsilon)\log(\epsilon^3n)$ after being expanded in the 2-core).

Notice that if $Y$ is the geometric variable $\Geom(1 -\mu)$ then
\[
\P\biggl(Y \geq\frac{1}{10\epsilon} \log(\epsilon^3 n) \biggr) = \mu
^{(1/10\epsilon) \log(\epsilon^3 n)} \geq(\epsilon^3 n)^{-1/10+o(1)} .
\]
Therefore, by the independence of the lengths of the 2-paths
and the fact that $|B_w| = \lceil(\epsilon^3 n)^{1/5} \rceil$, we
obtain that
\[
\P\bigl(E(B_w) \cap\mathcal{E} = \varnothing\bigr) \leq \bigl(1 - (\epsilon^3
n)^{-1/10+o(1)} \bigr)^{(\epsilon^3 n)^{1/5}} \leq\mathrm{e}^{-(\epsilon^3
n)^{1/10-o(1)}} .
\]
At this point, a union bound shows that the probability that for some
$v \in\K$ there exists some $w \in\mathcal{N}_v$, such
that $E(B_w)$ does not intersect $\mathcal{E}$, is at most
\[
\bigl(\tfrac43+o(1) \bigr)\epsilon^3 n \cdot\log(\epsilon^3 n) \cdot\mathrm
{e}^{-(\epsilon^3 n)^{1/10-o(1)}} = o(1) .
\]
\upqed
\end{pf}\eject

We are ready to prove the main result of this subsection, Proposition
\ref{prop-expected-visits}, which bounds the local times
induced by the random walk on the 2-core.

\begin{pf*}{Proof of Proposition \ref{prop-expected-visits}}
For some vertex $v\in\cH$ and subset $A\subset\cH$, let
\[
\tau^+_v \deq\min\{t > 0\dvtx W_t = v\} ,\qquad
\tau_{A} \deq\min\{t\dvtx W_t \in A\} .
\]
It is well known [see, e.g., \cite{LP}, equation (2.4)] that the
effective conductance has the following form:
\[
\P_v(\tau_{A} < \tau^+_v) = \frac{C_{\mathrm{eff}}(v
\leftrightarrow A)}{D_v} .
\]
Combined with Lemma \ref{lem-conductance-G}, it follows that
\[
\P_{v}(\tau_{\mathcal{E}_v} < \tau^+_v) = \frac{C_{\mathrm
{eff}}(v \leftrightarrow\mathcal{E}_v)}{D_v}\geq\epsilon/ (100 \log
(\epsilon^3 n) ) .
\]
On the other hand, for any $v\in\cH$, by definition $w\in\mathcal
{E}_v$ is the median of some 2-path, which does not contain $v$ and has
length at least $\frac{1}{20\epsilon}\log(\epsilon^3 n)$. Hence, by
well-known properties of hitting times for the simple random walk on
the integers, there exists some absolute constant $c>0$ such
that for any $v \in\cH$ and $w \in\mathcal{E}_v$:
\[
\P_w \bigl(\tau^+_v \geq c \epsilon^{-2} \log^2(\epsilon^3 n) \bigr) \geq\P
_w \bigl(\tau_{\K} \geq c \epsilon^{-2} \log^2(\epsilon^3 n) \bigr)\geq
\tfrac23 .
\]
Altogether, we conclude that
\begin{eqnarray*}
\P_v \bigl(\tau^+_v \geq c \epsilon^{-2} \log^2(\epsilon
^3 n) \bigr) &\geq&\P_v (\tau_{\mathcal{E}_v} < \tau^+_v ) \min_{w \in
\mathcal{E}_v} \bigl\{\P_w \bigl(\tau^+_v \geq c \epsilon^{-2} \log
^2(\epsilon^3 n)\bigr) \bigr\}\\
& \geq&\epsilon/ (150 \log(\epsilon^3 n) ) .
\end{eqnarray*}
Setting $t_c = c \epsilon^{-2} \log^2 (\epsilon^3 n)$, we can
rewrite the above as
\[
\P_v(N_{v,t_c}\geq2) \leq1 - \epsilon/ (150 \log(\epsilon^3 n) ) .
\]
By the strong Markovian property (i.e., $(W_{\tau_v^+ + t})$ is a
Markov chain with the same transition kernel of $(W_t)$), we deduce
that
\[
\P(N_{v,t_c} \geq k) \leq [1 - \epsilon/ (150 \log(\epsilon^3 n) )
]^{k-1} ,
\]
and hence
\[
\E N_{v,t_c} \leq(150/\epsilon) \log(\epsilon^3 n) .
\]
The proof is completed by observing that $\E_v (N_{v,s}) \leq
\lceil s/t_c\rceil\E_v N_{v,t_c} $ and that $\E_u N_{v,s} \leq\E_v
N_{v,s}$ for any $u$.
\end{pf*}

\section{Mixing on the giant component}\label{sec:mixing-gc}

In this section, we prove Theorem~\ref{mainthm-super}, which
establishes the order of the mixing time
of the lazy random walk on the supercritical $\GC$.

\subsection{Controlling the attached Poisson Galton--Watson
trees}\label{subsec:pgw}
So far, we have established that w.h.p.\
the mixing time of the lazy random walk on the 2-core $\TC[\tGC]$ has
order $\epsilon^{-2}
\log^2(\epsilon^3 n)$. To derive the mixing time for $\tGC$ based on
that estimate,
we need to consider the delays due to the excursions the random walk
makes in the attached trees.
As we will later see, these delays will be upper bounded by a certain a
linear combination of the sizes of the trees
(with weights determined by the random walk on the 2-core). The
following lemma will play a role in estimating this expression.
%
\begin{lemma}\label{lem-average-delay}
Let $\{\mathcal{T}_i\}$ be independent $\mathrm{PGW}(\mu)$-trees.
For any two constants $C_1, C_2 > 0$ there exists some constant $C>0$
such that the following holds: If $\{a_i\}_{i=1}^m$ is a sequence of
positive reals satisfying
%
\begin{eqnarray}
\sum_{i=1}^{m} a_ i &\leq& C_1 \epsilon^{-2} \log^2(\epsilon^3 n),\label{eq-lem-avg-delay-sum}\\
\max_{1\leq i \leq m} a_i &\leq& C_2 \epsilon^{-1} \log(\epsilon^3n) ,\label{eq-lem-avg-delay-max}
\end{eqnarray}
then
\[
\P \Biggl(\sum_{i=1}^{m} a_i |\mathcal{T}_i| \geq C \epsilon^{-3} \log^2
(\epsilon^3 n) \Biggr) \leq(\epsilon^3 n)^{-2} .
\]
\end{lemma}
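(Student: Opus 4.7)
\medskip\noindent\textbf{Proof proposal.}
Since $\mu = 1-(1+o(1))\epsilon$, each $|\mathcal{T}_i|$ has mean $1/(1-\mu) = (1+o(1))/\epsilon$, so the conjectural mean of $\sum a_i|\mathcal{T}_i|$ is $(1+o(1))\epsilon^{-1}\sum a_i \leq (1+o(1))C_1\epsilon^{-3}\log^2(\epsilon^3 n)$. Thus the statement is a large-deviation bound by a constant factor over the mean, and a Chernoff-type argument is natural. The plan is therefore to bound the moment generating function $g(s) \deq \E e^{s|\mathcal{T}_i|}$ in a regime where it remains controlled, then optimize the exponential parameter subject to the two hypotheses.

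\medskip
For a single PGW$(\mu)$ tree, the size generating function $f(z)=\E z^{|\mathcal{T}|}$ satisfies the standard functional equation $f(z) = z e^{\mu(f(z)-1)}$. Substituting $z=e^s$ gives $\log g(s) = s + \mu(g(s)-1)$, and writing $h = g(s)-1$ yields
\[
\epsilon h - \tfrac{h^2}{2} + \tfrac{h^3}{3} - \cdots \;=\; s\,.
\]
Viewing the left-hand side as a function of $h\geq 0$, its maximum over $h\geq 0$ is $-\log(1-\epsilon)-\epsilon = \epsilon^2/2 + O(\epsilon^3)$, achieved at $h^* = \epsilon/(1-\epsilon)$. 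Hence for any $s \leq \epsilon^2/4$, say, the equation has a unique solution $h(s) \in (0,h^*)$, which one checks satisfies $h(s) \leq Ks/\epsilon$ for some absolute constant $K$ (this follows by comparing with the quadratic approximation $\epsilon h - h^2/2 = s$). Consequently,
\[
\log g(s) \;=\; \log(1+h(s)) \;\leq\; h(s) \;\leq\; K s/\epsilon \qquad \text{for all } 0 \leq s \leq \epsilon^2/4\,.
\]

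\medskip
Now set $s \deq c\epsilon^3/\log(\epsilon^3 n)$ for a sufficiently small absolute constant $c = c(C_2)>0$, chosen so that the hypothesis \eqref{eq-lem-avg-delay-max} gives $sa_i \leq c C_2 \epsilon^2 \leq \epsilon^2/4$ uniformly in $i$. Apply Markov's inequality to the exponential:
\[
\P\Big(\sum_{i=1}^{m} a_i|\mathcal{T}_i| \geq C\epsilon^{-3}\log^2(\epsilon^3 n)\Big) \;\leq\; e^{-sC\epsilon^{-3}\log^2(\epsilon^3 n)} \prod_{i=1}^{m} g(sa_i)\,.
\]
Using the MGF bound above together with \eqref{eq-lem-avg-delay-sum},
\[
\sum_{i=1}^m \log g(sa_i) \;\leq\; \frac{K}{\epsilon}\sum_{i=1}^m s a_i \;\leq\; \frac{K s C_1}{\epsilon^3}\log^2(\epsilon^3 n)\,.
\]
Combining these, the right-hand side is at most $\exp\!\big(-s(C - KC_1)\epsilon^{-3}\log^2(\epsilon^3 n)\big)$, and substituting our choice of $s$ turns this into $\exp\!\big(-(C-KC_1)c\log(\epsilon^3 n)/C_2\big)$. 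Choosing $C$ large enough that $(C-KC_1)c/C_2 \geq 2$ produces the desired bound $(\epsilon^3 n)^{-2}$.

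\medskip
The only mildly delicate point is step 1: the MGF $g(s)$ blows up at $s^* \approx \epsilon^2/2$, so the hypothesis $a_{\max} \leq C_2\epsilon^{-1}\log(\epsilon^3 n)$ is essential in allowing a choice of $s$ that is simultaneously (i) small enough that $sa_i \ll s^*$ for every $i$ (so that $g(sa_i)$ is bounded via the quadratic approximation), and (ii) large enough that $sM$ yields the required polynomial tail in $\epsilon^3 n$. Once this tradeoff is set up correctly, the rest is a routine optimization.
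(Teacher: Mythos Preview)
Your argument is correct and is in fact cleaner than the paper's.  The paper does not exploit the functional equation for the Borel MGF; instead it computes the mean and variance of $|\mathcal{T}|$ directly, and then invokes a black-box third-moment Chernoff inequality (Kim's Corollary~4.2), which requires a separate, somewhat laborious estimate of $\E\big[T^3 e^{\xi T}\big]$ via Stirling's formula on the explicit Borel density, followed by tuning the parameters $\xi_0$, $\delta$, and $\Delta$.  Your route---solving $\log(1+h)-\mu h=s$ for $h=g(s)-1$ to get $\log g(s)\le Ks/\epsilon$ whenever $s\le \epsilon^2/4$, and then running the standard Chernoff bound---reaches the same conclusion with less machinery and no appeal to the explicit Borel formula.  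Both arguments use the hypotheses \eqref{eq-lem-avg-delay-sum}--\eqref{eq-lem-avg-delay-max} in the same essential way: the bound on $\max_i a_i$ forces the exponential parameter to stay below the blow-up threshold $s^*\sim\epsilon^2/2$, and the bound on $\sum_i a_i$ then controls the product of the MGFs.  A minor cosmetic point: in your final display the factor $1/C_2$ arises only because you implicitly took $c$ proportional to $1/C_2$; it would read more clearly to fix, say, $c=1/(4C_2)$ explicitly before that step.
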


\begin{pf}
It is well known (see, e.g., \cite{Pitman}) that the size of a
Poisson$(\gamma)$--Galton--Watson tree $\mathcal{T}$ follows a
Borel($\gamma$) distribution, namely,
%
\begin{equation}\label{eq-PGW-size}
\P(|\mathcal{T}| = k)=\frac{k^{k-1}}{\gamma k!}(\gamma\mathrm
{e}^{-\gamma})^{k} .
\end{equation}
The following is a well-known (and easy) estimate on the size of a
PGW-tree; we include its proof for completeness.
\begin{claim}\label{claim-variance-PGW}
Let $0<\gamma< 1$, and let $\mathcal{T}$ be a $\mathrm{PGW}(\gamma
)$-tree. Then
\[
\E|\mathcal{T}| = \frac{1}{1-\gamma} ,\qquad  \var( |\mathcal{T}| ) =
\frac{\gamma}{(1-\gamma)^3} .
\]
\end{claim}

\begin{pf}
For $k = 0, 1, \ldots,$ let $L_k$ be the number of vertices in the
$k$th level of the tree $\mathcal{T}$. Clearly,
$\E L_k = \gamma^k$, and so
$\E|\mathcal{T}| = \E\sum_kL_k = 
\frac{1}{1-\gamma}$.

By the total-variance formula,
%
\begin{eqnarray*}
\var(L_i) &=& \var(\E(L_i \given L_{i-1} ) ) + \E(\var(L_i \given
L_{i-1} ) )\\
&=&\gamma^2 \var(L_{i-1}) + \gamma\E L_{i-1} = \gamma^2 \var
(L_{i-1})+ \gamma^i .
\end{eqnarray*}
%
By induction,
%
\begin{equation}\label{eq-var-L-i}
\var(L_i) = \sum_{k=i}^{2i-1} \gamma^k = \gamma^i \frac{1-\gamma
^i}{1-\gamma} .\vadjust{\goodbreak}
\end{equation}
We next turn to the covariance of $L_i,L_j$ for $i \leq j$:
\begin{eqnarray*}
\Cov(L_i, L_j)& = &\E[L_i L_j] - \E L_i \E L_j 
= \gamma^{j-i} \E L_i^2 - \gamma^{i+j} \\ 
& = &\gamma^{j-i} \var(L_i) = \gamma^j \frac{1 - \gamma^i}{1 -
\gamma} .
\end{eqnarray*}
Summing over the variances and covariances of the $L_i$'s, we deduce that
%
\[
\var(|\mathcal{T}|) 
= 2\sum_{i= 0}^\infty\sum_{j= i}^\infty\gamma^{j}\frac{1 - \gamma
^i}{1 - \gamma} - \sum_{i=0}^\infty\gamma^i \frac{1-\gamma
^i}{1-\gamma} 
=\frac{\gamma}{(1-\gamma)^3} .
\]
\upqed
\end{pf}
We need the next lemma to bound the tail probability for $\sum a_i
|\mathcal{T}_i|$.
\begin{lemma}[(\cite{KimB}, Corollary 4.2)]\label{lem-Generalized-Chernoff-bound}
Let $X_1,\ldots, X_m$ be independent r.v.'s with $\E[X_i] = \mu_i$.
Suppose there are $b_i$, $d_i$ and
$\xi_0$ such that $\var(X_i) \leq b_i$, and
\[
\bigl|\E\bigl[(X_i - \mu_i)^3 \mathrm{e}^{\xi(X_i - \mu_i)} \bigr] \bigr| \leq d_i
\qquad \mbox{for all } 0\leq|\xi| \leq\xi_0 .
\]
If $\delta\xi_0 \sum_{i=1}^m d_i \leq\sum_{i=1}^{m} b_i$ for some
$0 < \delta\leq1$, then for all $\Delta>0$,
\[
\P \Biggl( \Biggl|\sum_{i=1}^m X_i - \sum_{i=1}^m \mu_i \Biggr| \geq\Delta\Biggr) \leq
\exp\biggl(-\frac{1}{3}\min\biggl\{\delta\xi_0 \Delta, \frac{\Delta^2}{\sum
_{i=1}^m b_i} \biggr\} \biggr) .
\]
\end{lemma}

Let $T_i = |\mathcal{T}_i|$ and $X_i = a_i T_i$ for $i \in[m]$. Claim
\ref{claim-variance-PGW} gives that
\[
\mu_i = \E X_i = a_i/(1-\mu) .
\]
Now set
\[
\xi_0 = \epsilon^3/(10 C_2 \log(\epsilon^3 n)) .
\]
For any $|\xi| \leq\xi_0$, we have $a_i |\xi| \leq\epsilon^2/10$
by the assumption \eqref{eq-lem-avg-delay-max}, and so
\begin{eqnarray}\label{eq-e-(xi-mu)^3}
\bigl|\E\bigl[(X_i - \mu_i)^3 \mathrm{e}^{\xi(X_i - \mu_i)} \bigr] \bigr| &=& a_i^3
\biggl|\E\biggl[ \biggl(T_i - \frac{1}{1-\mu} \biggr)^3 \mathrm{e}^{\xi a_i (T_i -
{1}/{(1-\mu)})} \biggr]\biggr|\nonumber\\
& \leq& a_i^3 \E\bigl[(1-\mu)^{-3} \one_{\{T_i < (1-\mu)^{-1}\}} \bigr] \nonumber\\[-8pt]\\[-8pt]
&&{}+a_i^3
\E\bigl[T_i ^3 \mathrm{e}^{\xi a_i T_i}\one_{\{T_i\geq(1-\mu)^{-1}\}}\bigr]\nonumber \\
& \leq& a_i^3 (1-\mu)^{-3} + a_i^3 \E [T_i^3 \exp(\epsilon^2
T_i/10) ] .\nonumber
\end{eqnarray}
Recalling the law of $T_i$ given by \eqref{eq-PGW-size}, we obtain that
\[
\E \bigl(T_i^3 \exp(\epsilon^2 T_i/10) \bigr)= \sum_{k = 1}^\infty\frac
{k^{k-1}}{\mu k!} (\mu\mathrm{e}^{-\mu})^k k^3 \mathrm{e}^{\epsilon^2
k/10} .
\]
Using Stirling's formula, we obtain that for some absolute constant $c>1$,
\begin{eqnarray}\label{eq-apply-stirling}
\E\bigl(T_i^3 \exp(\epsilon^2 T_i/10) \bigr)
&\leq& c\sum_{k = 1}^\infty\frac{ k^{k-1} (\mu\mathrm{e}^{-\mu
})^k}{\mu(k/\mathrm{e})^k \sqrt{k}}k^3 \mathrm{e}^{\epsilon^2
k/10}\nonumber\\[-8pt]\\[-8pt]
&=&\frac{c}{\mu} \sum_{k = 1}^\infty k^{3/2} (\mu\mathrm{e}^{1-\mu
})^k \mathrm{e}^{\epsilon^2 k/10} .\nonumber
\end{eqnarray}
Recalling that $\mu = 1-\epsilon+ \frac23 \epsilon^2 + O(\epsilon
^3)$ and
using the fact that $1 - x \leq\mathrm{e}^{-x-x^2/2}$ for $x \geq
0$, we get that for sufficiently large $n$ (and hence small enough
$\epsilon$),
%
\begin{equation}\label{eq-mu-e-1-mu}
\mu\mathrm{e}^{1 - \mu} = \bigl(1-(1-\mu) \bigr)\mathrm{e}^{1-\mu} \leq\exp\bigl(-\tfrac12\epsilon^2 +
O(\epsilon^3) \bigr) \leq\mathrm{e}^{-\epsilon^2/3} .
\end{equation}
Plugging the above estimate into \eqref{eq-apply-stirling}, we obtain
that for large $n$,
\begin{eqnarray*}
\E[T_i^3 \exp(\epsilon^2 T_i/10) ]&\leq&2c \sum_{k =1}^\infty
k^{3/2} \mathrm{e}^{-\epsilon^2 k/6} \leq
4c\int_0^\infty x^{3/2} \mathrm{e}^{-\epsilon^2 x/6} \,dx\\
& \leq&400c\epsilon^{-5}\int_0^\infty x^{3/2}\mathrm{e}^{-x} \,dx =
300\sqrt{\pi} c \epsilon^{-5} .
\end{eqnarray*}
Going back to \eqref{eq-e-(xi-mu)^3}, we get that for some absolute
$c'>1$ and any large $n$,
\[
\bigl|\E\bigl[(X_i - \mu_i)^3 \mathrm{e}^{\xi(X_i - \mu_i)} \bigr] \bigr| \leq a_i^3
(2\epsilon^{-3} + c'\epsilon^{-5} ) \leq a_i \cdot2c' C_2^2 \epsilon
^{-7} \log^2 (\epsilon^3 n)\deq d_i ,
\]
where the second inequality used \eqref{eq-lem-avg-delay-max}.

By Claim \ref{claim-variance-PGW}, it follows that for large enough $n$,
\[
\var(X_i) = a_i^2 \var(T_i) = a_i^2 \frac{\mu}{(1 - \mu)^3} \leq2
a_i^2 \epsilon^{-3} \leq a_i \cdot2 C_2 \epsilon^{-4} \log(\epsilon
^3 n) \deq b_i .
\]
Since $\sum_i d_i = (c' C_2 \epsilon^{-3} \log(\epsilon^3 n) )\sum
_i b_i$, by setting $\delta=1$ (and recalling our choice of $\xi_0$)
we get
\[
\delta\xi_0 \sum_{i=1}^m d_i = \frac{\delta c'}{10}\sum_i b_i \leq
\sum_{i=1}^{m} b_i .
\]
We have thus established the conditions for Lemma \ref
{lem-Generalized-Chernoff-bound}, and it remains to select~$\Delta$.
For a choice of $\Delta= (60 C_2 \vee\sqrt{12 C_1 C_2}) \epsilon
^{-3} \log^2 (\epsilon^3 n)$, by definition of $\xi_0$ and the
$b_i$'s we have
\begin{eqnarray*}
\xi_0 \Delta&\geq&6 \log(\epsilon^3 n) ,\\
\Delta^2/\sum_i b_i &\geq&6 C_1 \epsilon^{-2} \log
^3(\epsilon^3 n) / \sum_i a_i \geq6\log(\epsilon^3 n) ,
\end{eqnarray*}
where the last inequality relied on \eqref{eq-lem-avg-delay-sum}.
Hence, an application of Lemma~\ref{lem-Generalized-Chernoff-bound}
gives that
for large enough $n$,
\[
\P \biggl(\sum_i a_i T_i - \sum_i \mu_i \geq\Delta\biggr) \leq
(\epsilon^3 n)^{-2} .
\]
Finally, by \eqref{eq-lem-avg-delay-sum} and using the fact that
$1-\mu\geq\epsilon/2$ for any large $n$,
we have $\sum_i \mu_i = (1-\mu)^{-1}\sum_i a_i \leq2 C_1 \epsilon
^{-3} \log^2(\epsilon^3 n)$.
The proof of Lemma \ref{lem-average-delay} is thus concluded by
choosing $C = 2C_1 + (60 C_2 \vee \sqrt{12 C_1 C_2})$.
\end{pf}

To bound the time it takes the random walk to exit from an attached
PGW-tree (and enter the 2-core), we will need to control the diameter
and volume of such a tree. The following simple lemma of \cite{DKLP2}
gives an estimate on the diameter of a PGW-tree.
\begin{lemma}[(\cite{DKLP2}, Lemma 3.2)]\label{lem-PGW-diameter}
Let $\mathcal{T}$ be a $\mathrm{PGW}(\mu)$-tree and $L_k$ be its
$k$th level of vertices. Then
$ \P(L_k \neq\varnothing) \asymp\epsilon\mathrm{e}^{-k(\epsilon+
O(\epsilon^2))}$ for any $k \geq1/\epsilon$.
\end{lemma}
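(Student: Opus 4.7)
The plan is to exploit the classical survival-probability recursion for a Galton--Watson process and extract a sharp asymptotic through a linear recursion on its reciprocal. First, setting $q_k \deq \P(L_k \neq \emptyset)$ and conditioning on the $\mathrm{Poisson}(\mu)$-distributed number of children of the root gives
\[q_k = 1 - \exp(-\mu\, q_{k-1})\,, \qquad q_0 = 1\,.\]
Next, I would pass to reciprocals, defining $h_k \deq 1/q_k$, and work out the exact form of the increment $h_k - h_{k-1}/\mu$. A short calculation shows this equals $g(\mu q_{k-1})$, where $g(x) \deq (1-e^{-x})^{-1} - 1/x$. I would then verify that $g$ is monotone nondecreasing on $(0,\infty)$ with $\lim_{x\downarrow 0}g(x) = 1/2$ and $\lim_{x\to\infty}g(x) = 1$, reducing the monotonicity to the elementary inequality $2\sinh(x/2)\ge x$ (equivalently $\cosh(x)\ge 1+x^2/2$). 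This delivers the uniform pinching
\[\tfrac12 \;\le\; h_k - h_{k-1}/\mu \;\le\; 1 \qquad (k\ge 1)\,.\]

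Iterating this affine recursion is then a routine geometric sum: starting from $h_0 = 1$, it produces a two-sided bound of the form $A_\mu\mu^{-k} - B_\mu \le h_k \le A'_\mu \mu^{-k} - B'_\mu$, with both $A_\mu, A'_\mu$ of order $1/(1-\mu)$. Using that $\mu$ is the conjugate of $1+\epsilon$, the expansion $\mu e^{-\mu} = (1+\epsilon)e^{-(1+\epsilon)}$ gives $1-\mu = \epsilon - \tfrac23 \epsilon^2 + O(\epsilon^3)$, hence $1/(1-\mu) = \Theta(1/\epsilon)$. For $k\ge 1/\epsilon$ the factor $\mu^{-k}$ is bounded below by a positive constant (close to $e$ at $k = 1/\epsilon$), so both sides of the bracket are $\Theta(\mu^{-k}/\epsilon)$. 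Passing back to reciprocals yields $q_k = \Theta(\epsilon\,\mu^k)$, and the final step uses $\log\mu = -(1-\mu) - (1-\mu)^2/2 - \cdots = -\epsilon + O(\epsilon^2)$ to rewrite $\mu^k = e^{-k(\epsilon + O(\epsilon^2))}$, completing the proof.

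The main obstacle I anticipate is proving the additive increment bound $h_k - h_{k-1}/\mu \ge 1/2$. The naive inequality $1-e^{-x}\le x$ only gives $h_k \ge h_{k-1}/\mu$, which misses the crucial additive $\Theta(1)$ increment per step and, consequently, the $\Theta(1/\epsilon)$ prefactor in $h_k$ that is responsible for the $\epsilon$ in the lemma's conclusion; a second-order Taylor expansion of $1-e^{-x}$, combined with accumulating error control, runs into difficulties because the naive error estimate turns out to be of the same order as the main term. The advantage of passing to the function $g$ is that it sidesteps this, packaging the full nonlinear correction into a single monotone quantity with explicit tight limits at $0$ and $\infty$; once this pinch is in hand, both bounds implicit in the $\Theta$-notation fall out of a single geometric sum.
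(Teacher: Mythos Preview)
The paper does not prove this lemma; it is quoted verbatim from the companion paper \cite{DKLP2}, so there is no in-paper argument to compare against. Your proposal is self-contained and correct: the recursion $q_k = 1 - \exp(-\mu q_{k-1})$ is standard, the identity $h_k - h_{k-1}/\mu = g(\mu q_{k-1})$ with $g(x) = (1-e^{-x})^{-1} - 1/x$ is easily checked, and your monotonicity argument for $g$ (via $(1-e^{-x})^2 e^x = (2\sinh(x/2))^2 \ge x^2$) is clean. The resulting pinch $\tfrac12 \le h_k - h_{k-1}/\mu \le 1$ telescopes to $h_k = \mu^{-k}\bigl(1 + \sum_{j=1}^k \mu^j g_j\bigr)$, giving $h_k$ trapped between $\tfrac{2-\mu}{2(1-\mu)}\mu^{-k} - \tfrac{\mu}{2(1-\mu)}$ and $\tfrac{1}{1-\mu}\mu^{-k}$; for $k \ge 1/\epsilon$ one has $\mu^{-k} \ge e^{1+o(1)}$, so the subtracted constant is absorbed and $h_k = \Theta(\mu^{-k}/\epsilon)$, hence $q_k = \Theta(\epsilon\mu^k)$.

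One small quibble: the parenthetical ``equivalently $\cosh(x) \ge 1 + x^2/2$'' is not quite an equivalence but an integrated consequence of $\sinh(y) \ge y$; this is cosmetic and does not affect the argument. Your discussion of the obstacle is also apt: the trick of passing to $h_k = 1/q_k$ to linearize the recursion up to a bounded additive error is exactly the classical device (going back to Kolmogorov's analysis of critical branching survival probabilities) that converts the nonlinear iteration into a tractable geometric sum, and it is the natural route here.
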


The next lemma gives a bound on the volume of a PGW-tree.
\begin{lemma}\label{lem-PGW-volume}
Let $\mathcal{T}$ be a $\mathrm{PGW}(\mu)$-tree. Then
\[
\P\bigl(|\mathcal{T}| \geq6\epsilon^{-2} \log(\epsilon^3 n)\bigr) = o(
\epsilon(\epsilon^3 n)^{-2}) .
\]
\end{lemma}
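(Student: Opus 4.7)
The plan is to control the tail of $|\mathcal{T}|$ using the explicit Borel law \eqref{eq-PGW-size} together with two ingredients already available in the excerpt: Stirling's approximation (as used in \eqref{eq-apply-stirling}) and the estimate $\mu \mathrm{e}^{1-\mu} \leq \mathrm{e}^{-\epsilon^2/3}$ from \eqref{eq-mu-e-1-mu}. Since $\mu \mathrm{e}^{-\mu} = \mathrm{e}^{-1}\cdot \mu \mathrm{e}^{1-\mu}$, plugging Stirling into \eqref{eq-PGW-size} yields a pointwise bound of the form
\[
\P(|\mathcal{T}|=k) \;\leq\; \frac{C_0}{\mu\, k^{3/2}}\bigl(\mu \mathrm{e}^{1-\mu}\bigr)^{k} \;\leq\; \frac{C_0}{\mu\, k^{3/2}}\, \mathrm{e}^{-\epsilon^{2} k/3}
\]
for some absolute constant $C_0$ and every $k \geq 1$.

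Next I would set $K_0 \deq 6\epsilon^{-2}\log(\epsilon^{3} n)$ and sum the above bound over $k \geq K_0$. Factoring out $K_0^{-3/2}$ (since the factor $k^{-3/2}$ is decreasing) leaves a geometric series in $\mathrm{e}^{-\epsilon^{2}/3}$, and standard bounds give $1 - \mathrm{e}^{-\epsilon^{2}/3} \geq \epsilon^{2}/6$ for small $\epsilon$. This gives
\[
\P(|\mathcal{T}| \geq K_0) \;=\; O\!\left(\frac{1}{\mu\, K_0^{3/2}\, \epsilon^{2}}\right)\mathrm{e}^{-\epsilon^{2} K_0/3}.
\]
The choice of $K_0$ was made precisely so that $\mathrm{e}^{-\epsilon^{2} K_0/3} = (\epsilon^{3} n)^{-2}$, which extracts the required $(\epsilon^{3}n)^{-2}$ factor.

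Finally, I would verify the prefactor is $o(\epsilon)$. Since $\mu = 1 - o(1)$ and $K_0^{3/2} = \Theta\bigl(\epsilon^{-3}\log^{3/2}(\epsilon^{3} n)\bigr)$, the prefactor in front of $(\epsilon^{3}n)^{-2}$ is of order
\[
\frac{1}{\mu\, K_0^{3/2}\, \epsilon^{2}} \;=\; O\!\left(\frac{\epsilon}{\log^{3/2}(\epsilon^{3} n)}\right) \;=\; o(\epsilon),
\]
since $\log(\epsilon^{3}n) \to \infty$ by assumption. Combining yields $\P(|\mathcal{T}| \geq K_0) = o\bigl(\epsilon (\epsilon^{3}n)^{-2}\bigr)$, which is exactly the claim.

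There is no genuine obstacle here — the lemma is purely a tail-bound computation for the Borel distribution. The only subtlety is making sure to retain the $k^{-3/2}$ polynomial factor from Stirling (rather than bounding it by $1$); the logarithmic savings it provides are what upgrades the natural $O((\epsilon^{3}n)^{-2})$ bound to the stronger $o(\epsilon (\epsilon^{3}n)^{-2})$ required by the statement.
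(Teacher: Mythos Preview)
Your proposal is correct and follows essentially the same route as the paper: both use the explicit Borel law \eqref{eq-PGW-size} together with Stirling's formula and the bound $\mu\mathrm{e}^{1-\mu}\le \mathrm{e}^{-\epsilon^2/3}$ from \eqref{eq-mu-e-1-mu}, then sum the resulting geometric-type tail from $K_0=6\epsilon^{-2}\log(\epsilon^3 n)$. The paper records the final prefactor as $O\bigl(\epsilon/\sqrt{\log(\epsilon^3 n)}\bigr)$ rather than your sharper $O\bigl(\epsilon/\log^{3/2}(\epsilon^3 n)\bigr)$, but both are $o(\epsilon)$ and the argument is the same.
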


\begin{pf}
Recalling \eqref{eq-PGW-size} and
applying Stirling's formula, we obtain that for any $s > 0$,
%
\begin{equation}\label{eq-PGW-tail-prob}
\P(|\mathcal{T}| \geq s) = \sum_{k \geq s} \frac{k^{k-1}}{\mu k!}
(\mu\mathrm{e}^{-\mu})^k
\asymp\sum_{k \geq s}
\frac{(\mu\mathrm{e}^{1 - \mu})^k}{k^{3/2}} .
\end{equation}
Write $r =\log(\epsilon^3 n) $. By estimate \eqref{eq-mu-e-1-mu}, we
now get that for large enough $n$,
\[
\sum_{k \geq 6 \epsilon^{-2} r} k^{-3/2}(\mu\mathrm{e}^{1 -
\mu})^k \leq\sum_{k \geq 6 \epsilon^{-2} r}
k^{-3/2}\mathrm{e}^{-\epsilon^2 k/3}
= O\bigl( \mathrm{e}^{-2r } \epsilon/\sqrt{r}\bigr) ,
\]
and combined with \eqref{eq-PGW-tail-prob} this concludes the proof.
\end{pf}

Finally, for the lower bound, we will need to show that w.h.p.\ one of the
attached PGW-trees in $\tGC$ is suitably large, as we next describe.
For a rooted tree $\mathcal{T}$, let $L_k$ be its $k$th level of
vertices and $\mathcal{T}_v$ be its entire subtree rooted at $v$.
Define the event
\[
A_{r, s}(\mathcal{T}) \deq\{\mbox{$\exists v \in L_r$ such that
$|\mathcal{T}_v| \geq s$}\} .
\]
The next lemma gives a bound on the probability of this event when
$\mathcal{T}$ is a~PGW($\mu$)-tree.
\begin{lemma}\label{lem-bush-diam-volume}
Let $\mathcal{T}$ be a $\mathrm{PGW}(\mu)$-tree and take
$r = \lceil\frac{1}{8} \epsilon^{-1} \log(\epsilon^3 n) \rceil$
and $s= \frac{1}{8} \epsilon^{-2} \log(\epsilon^3 n)$.
Then for any sufficiently large $n$,
\[
\P(A_{r, s}(\mathcal{T})) \geq\epsilon(\epsilon^3 n)^{-2/3} .
\]
\end{lemma}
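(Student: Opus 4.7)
The plan is to apply a second-moment (Paley-Zygmund) argument to the counting variable
\[N \deq \#\{v \in L_r : |\mathcal{T}_v|\geq s\},\]
since $A_{r,s}(\mathcal{T}) = \{N \geq 1\}$. The crucial structural input is the branching property of the Galton-Watson tree: conditionally on $L_r$, the subtrees $\{\mathcal{T}_v : v\in L_r\}$ are i.i.d.\ copies of $\mathcal{T}$. Writing $p_s \deq \P(|\mathcal{T}|\geq s)$, this immediately yields $\E N = \mu^r p_s$ and $\E N^2 \leq \mu^r p_s + \E[|L_r|^2]\,p_s^2$.

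Three asymptotic estimates then feed into Paley-Zygmund. First, $1-\mu = \epsilon + O(\epsilon^2)$ together with $r = (1+o(1))\tfrac{1}{8}\epsilon^{-1}\log(\epsilon^3 n)$ gives $\mu^r \geq (\epsilon^3 n)^{-1/8-o(1)}$. Second, from \eqref{eq-var-L-i} in the proof of Claim~\ref{claim-variance-PGW}, $\E[|L_r|^2] \leq \mu^r/(1-\mu) + \mu^{2r} = O(\mu^r/\epsilon)$. Third---and this is the technical heart---I will need a lower bound on $p_s$. To get it I will mimic the Stirling computation in the proof of Lemma~\ref{lem-PGW-volume}: the Borel-distribution formula \eqref{eq-PGW-size} combined with Stirling gives $\P(|\mathcal{T}|=k) = \Theta(k^{-3/2}(\mu e^{1-\mu})^k)$, and the Taylor expansion $\log(\mu e^{1-\mu}) = -(1-\mu)^2/2 + O(\epsilon^3)$ supplies the matching lower bound $\mu e^{1-\mu} \geq e^{-\epsilon^2(1+o(1))/2}$. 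Summing the one-term estimate over $k \in [s,\, s+\epsilon^{-2}]$---an interval on which the exponential factor only changes by a constant---yields
\[p_s \;\geq\; c\,\epsilon^{-2}\, s^{-3/2}(\mu e^{1-\mu})^s \;\geq\; c\,\epsilon\,\bigl(\log(\epsilon^3 n)\bigr)^{-3/2}\,(\epsilon^3 n)^{-1/16-o(1)}.\]

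Finally, since $p_s = o(\epsilon)$, the second-moment correction satisfies $\E[|L_r|^2]p_s^2 = O(\mu^r p_s \cdot p_s/\epsilon) = o(\mu^r p_s)$, hence $\E N^2 \leq (1+o(1))\mu^r p_s$, and Paley-Zygmund delivers
\[\P(A_{r,s})\;\geq\;\frac{(\E N)^2}{\E N^2}\;\geq\;(1-o(1))\,\mu^r p_s \;\geq\; c\,\epsilon\,(\epsilon^3 n)^{-3/16-o(1)},\]
which exceeds $\epsilon(\epsilon^3 n)^{-2/3}$ for every sufficiently large $n$, since $-3/16 > -2/3$. The hardest part will be the bookkeeping for $p_s$: one must sum the single-term asymptotic over the full window of width $\epsilon^{-2}$ on which the exponential factor is essentially constant, for this is precisely what converts the natural $s^{-3/2} \asymp \epsilon^3$ prefactor into an $\epsilon$ prefactor. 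The gap between $-3/16$ and the much weaker target exponent $-2/3$ then supplies substantial slack, so one does not need to be delicate with lower-order factors.
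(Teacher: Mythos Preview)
Your proof is correct and follows essentially the same route as the paper's: both arguments lower-bound $p_s=\P(|\mathcal T|\ge s)$ via the Borel formula and Stirling, compute $\E|L_r|=\mu^r$ and $\E|L_r|^2$ from \eqref{eq-var-L-i}, and then run a second-moment argument on the count $N$. The only cosmetic differences are that you invoke Paley--Zygmund where the paper uses the Bonferroni-type bound $\P(A_{r,s})\ge p_s\E|L_r|-p_s^2\E|L_r|^2$, and that your Taylor expansion $\log(\mu e^{1-\mu})=-\tfrac12(1-\mu)^2+O(\epsilon^3)$ (summed over the shorter window $[s,s+\epsilon^{-2}]$) yields the sharper exponent $-3/16$ in place of the paper's $-5/8$; either is comfortably above the target $-2/3$.
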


\begin{pf}
We first give a lower bound on the probability that $|\mathcal{T}|
\geq s$. By~\eqref{eq-PGW-tail-prob}, we have $\P(|\mathcal{T}| \geq
s) \geq c\sum_{k \geq s}k^{-3/2} (\mu\mathrm{e}^{1-\mu})^k$ for
some absolute $c>0$.
Recalling that $\mu = 1-\epsilon+ \frac23 \epsilon^2 + O(\epsilon
^3)$, we have that for $n$ large enough,
\[
\mu\mathrm{e}^{1 - \mu} \geq\mathrm{e}^{-(\epsilon+ \epsilon^2)}
\mathrm{e}^{\epsilon- \epsilon^2} \geq\mathrm{e}^{-2\epsilon^2} .
\]
Therefore, for $s = \frac{1}{8} \epsilon^{-2} \log(\epsilon^3 n)$
this gives that
\[
\P(|\mathcal{T}| \geq s) \geq c \sum_{s \leq k \leq2s}k^{-3/2}
\mathrm{e}^{-2 \epsilon^2 k} \geq c s (2s)^{-3/2} \mathrm{e}^{-4
\epsilon^2 s} \geq
\epsilon(\epsilon^3 n)^{-1/2+o(1)} .
\]
Combining this with the fact that $\{\mathcal{T}_v \dvtx v \in L_r\}$ are
i.i.d.\ PGW($\mu$)-trees given~$L_r$, we get
\[
\P(A_{r, s}(\mathcal{T}) \given L_r )
=1 - \bigl(1 - \P(|\mathcal{T}| \geq s)\bigr)^{|L_r|} \geq1- \bigl(1 - \epsilon
(\epsilon^3 n)^{-1/2+o(1)}\bigr)^{|L_r|} .
\]
Taking expectation over $L_r$, we conclude that
\begin{eqnarray}\label{eq-bush-diam-temp}
\P(A_{r, s}(\mathcal{T}) ) & \geq&1 - \E \bigl(\bigl(1 - \epsilon(\epsilon^3
n)^{-1/2+o(1)}\bigr)^{|L_r|} \bigr)\nonumber\\[-8pt]\\[-8pt]
& \geq&\epsilon(\epsilon^3 n)^{-1/2+o(1)} \E|L_r| - \epsilon^2
(\epsilon^3 n)^{-1+o(1)} \E|L_r|^2 .\nonumber
\end{eqnarray}
For $r = \lceil\frac{1}{8} \epsilon^{-1} \log(\epsilon^3 n) \rceil
$, we have
\[
\E(|L_r|) = \mu^r \geq\mathrm{e}^{-(\epsilon+ O(\epsilon^2))r}
\geq(\epsilon^3 n)^{-1/8+o(1)} ,
\]
and by \eqref{eq-var-L-i},
\[
\var|L_r| =\mu^r \frac{1- \mu^r}{1 - \mu} \leq\mathrm
{e}^{-\epsilon r} 2 \epsilon^{-1} \leq 2 \epsilon^{-1}(\epsilon^3
n)^{-1/8} .
\]
Plugging these estimates into \eqref{eq-bush-diam-temp}, we obtain that
\[
\P(A_{r, s}(\mathcal{T}) ) \geq\epsilon(\epsilon^3 n)^{-5/8+o(1)}
\geq\epsilon(\epsilon^3 n)^{-2/3} ,
\]
where the last inequality holds for large enough $n$, as required.
\end{pf}

\subsection{\texorpdfstring{Proof of Theorem \protect\ref{mainthm-super}: Upper bound on the mixing
time}{Proof of Theorem 1: Upper bound on the mixing time}}
By Theorem~\ref{thm-struct-gen}, it suffices to consider $\tGC$
instead of $\GC$.
As in the previous section, we abbreviate $\TC[\tGC]$ by $\cH$.

For each vertex $v$ in the 2-core $\cH$, let $\mathcal{T}_v$ be the
PGW-tree attached
to $v$ in $\tGC$. Let $(S_t)$ be the lazy random walk on $\tGC$,
define $\xi_0
= 0$ and for $j\geq0$,
\[
\xi_{j+1} = \cases{
\xi_{j}+1, &\quad  if $S_{\xi_{j}+1} = S_{\xi_{j}} $,\cr
\min \{ t > \xi_{j} \dvtx S_t \in\cH, S_t \neq S_{\xi_{j}} \},
&\quad otherwise.
}
\]
Defining $W_j \deq S_{\xi_{j}}$, we observe that $(W_j)$ is a lazy random
walk on $\cH$. Furthermore, started from any $w \in\cH$, there are
two options:
\begin{longlist}[(ii)]
\item[(i)] Do a step in the 2-core (either stay in $w$ via the lazy rule,
which has probability $\frac12$, or jump
to one of the neighbor of $w$ in $\cH$, an event that has probability
$d_\cH(w) /2d_\tGC(w)$).
\item[(ii)] Enter the PGW-tree attached to $w$ (this happens with probability
$d_{\mathcal{T}_w}(w)/ 2d_\tGC(w)$).
\end{longlist}
It is the latter case that incurs a delay for the random walk on $\tGC$.
Since the expected return time to $w$ once entering the tree $\mathcal
{T}_w$ is $2(|\mathcal{T}_w|-1)/d_{\mathcal{T}_w}(w)$,
and as the number of excursions to the tree follows a geometric
distribution with success probability $1- d_{\mathcal{T}_w}(w)/2d_\tGC(w)$,
we infer that
\[
\E_w \xi_1 = 1+ \frac{2(|\mathcal{T}_w|-1)}{d_{\mathcal{T}_w}(w)}
\cdot\frac{2d_\tGC(w)}{2d_\tGC(w)- d_{\mathcal{T}_w}(w)}
\leq4|\mathcal{T}_w| .
\]

For some constant $C_1>0$ to be specified later, let
%
\begin{equation}\label{eq-ell-def}
\ell = C_1 \epsilon^{-2} \log^2(\epsilon^3 n) \quad  \mbox{and}\quad
a_{v,w}(\ell) = \sum_{j=0}^{\ell-1}\P_v(W_j = w) .
\end{equation}
It follows that
\begin{eqnarray}\label{eq-E-tau-ell}
\E_v (\xi_\ell)& = & \sum_{j=0}^{\ell-1}\sum_{w\in\cH} \P
_v(S_{\xi_j} = w) \E_w \xi_1 \nonumber\\[-8pt]\\[-8pt]
&=& \sum_{w\in\cH} \sum_{j=0}^{\ell-1}\P_v(W_j = w) \E_w \xi_1
\leq4\sum_{w\in\cH} a_{v, w}(\ell)|\mathcal{T}_w| .\nonumber
\end{eqnarray}
We now wish to bound the last expression via Lemma \ref{lem-average-delay}.
Let $v \in\K$. Note that, by definition,
\[
\sum_{w\in\cH}a_{v, w}(\ell) =\ell=C_1 \epsilon^{-2} \log
^2(\epsilon^3 n) .
\]
Moreover, by Proposition \ref{prop-expected-visits}, there
exists some constant $C_2>0$ (which depends on $C_1$) such that w.h.p.\
\[
\max_{w\in\cH}
a_{v, w}(\ell) \leq C_2 \epsilon^{-1} \log(\epsilon^3 n) .
\]
Hence, Lemma \ref{lem-average-delay} (applied on the sequence $\{
a_{v,w}(\ell) \dvtx w \in\cH\}$) gives that there exists some constant
$C>0$ (depending only on $C_1,C_2$) such that
\[
\sum_{w \in\cH} a_{v, w}(\ell) |\mathcal{T}_v| \leq C \epsilon
^{-3} \log^2(\epsilon^3 n)\qquad
\mbox{except with probability $(\epsilon^{3}n)^{-2}$} .
\]
Since $|\K| = (\frac43 + o(1)) \epsilon^{3} n$ w.h.p., taking a
union bound over the vertices of the kernel
while recalling \eqref{eq-E-tau-ell} implies that w.h.p.,
%
\begin{equation}\label{eq-bound-tau-j}
\E_v (\xi_\ell) \leq C \epsilon^{-3} \log^2(\epsilon^3 n) \qquad \mbox{for all $v\in\K$} .
\end{equation}

We next wish to bound the hitting time to the kernel $\K$, defined next:
\[
\tau_\K= \min\{t \dvtx S_t \in\K\} .
\]
Define $\tau_{x}$ and $\tau_{S}$ analogously as the hitting times of
$S_t$ to the vertex $x$ and the subset $S$, respectively.
Recall that from any $v\in\tGC$, after time $\xi_1$ we will have hit
a vertex in the 2-core, hence
for any $v\in\tGC$ we have
%
\begin{equation}\label{eq-tau-K-1}
\E_v \tau_\K\leq\E_v \tau_{\cH} + \max_{w \in\cH} \E_w \tau_\K.
\end{equation}
To bound the first summand, since
\[
\max_{v\in\tGC}\E_v \tau_{\cH} = \max_{w\in\cH}\max_{v\in
\mathcal{T}_w} \E_v \tau_w ,
\]
it clearly suffices to bound $\E_v \tau_w$ for all $w\in\cH$ and
$v\in
\mathcal{T}_w$. To this end,
let $w \in\cH$, and let $\tilde{S}_t$ be the lazy random walk on
$\mathcal{T}_w$.
As usual, define $\tilde{\tau}_v = \min\{t\dvtx \tilde{S}_t = v\}$.
Clearly, for all
$v\in\mathcal{T}_w$ we have $\E_v \tau_w = \E_v\tilde{\tau}_w$.
We bound~$\E_v \tilde{\tau}_w$ by $\E_v \tilde{\tau}_w + \E
_w\tilde{\tau}_v$, that is, the commute time between $v$ and $w$. Denote by
$\mathrm{R}_{\mathrm{eff}}(v, w)$ the effective resistance between
$v$ and $w$ when each edge has unit resistance. The commute time
identity of \cite{CRRST} (see also \cite{Tetali}) yields that
%
\begin{equation}\label{eq-Ev-tau-w}
\E_v \tilde{\tau}_w + \E_w\tilde{\tau}_v \leq4 |\mathcal{T}_w|
R_{\mathrm{eff}(v\leftrightarrow w)} \leq4 |\mathcal{T}_w| \diam
(\mathcal{T}_w) .
\end{equation}
Now, Lemmas \ref{lem-PGW-diameter} and \ref{lem-PGW-volume} give that for
any $w \in\cH$, with probability at least $1 - O(\epsilon
(\epsilon^3 n)^{-2})$,
%
\begin{equation}\label{eq-PGW-volume-diameter}
|\mathcal{T}_w| \leq6\epsilon^{-2} \log(\epsilon^3 n) \quad \mbox{and}\quad
\diam(\mathcal{T}_w) \leq2\epsilon^{-1} \log(\epsilon^3 n) .
\end{equation}
Since w.h.p.\ $|\cH| = (2+o(1))\epsilon^2 n$, we can sum the above
over the vertices of $\cH$ and conclude that w.h.p.,
\eqref{eq-PGW-volume-diameter} holds simultaneously for all $w \in\cH$.
Plugging this in \eqref{eq-Ev-tau-w}, we deduce that
\[
\E_v \tilde{\tau}_w + \E_w\tilde{\tau}_v \leq48\epsilon
^{-3}\log^2(\epsilon^3n) ,
\]
and altogether, as the above holds for every $w \in\cH$,
%
\begin{equation}\label{eq-bounnd-tau-1} \max_{v\in\tGC}\E_v \tau
_{\cH} \leq48\epsilon^{-3}\log^2(\epsilon^3n) .
\end{equation}

For the second summand in \eqref{eq-tau-K-1}, consider $e\in\K$ and
let $\mathcal{P}_e$ be the
2-path corresponding to $e$ in the 2-core $\cH$. Recall that w.h.p.\
the longest such 2-path in the 2-core
has length $(1+o(1))\epsilon^{-1} \log(\epsilon^3 n)$.
Since from each point $v\in\mathcal{P}_e$, we have probability at
least $2/|\mathcal{P}_e|$ to hit one
of the endpoints of the 2-path (belonging to $\K$) before returning to
$v$, it follows that w.h.p.,
for every $e\in\K$ and $v\in\mathcal{P}_e$ we have
%
\begin{equation}\label{eq-max-visit}
\max_{w\in\mathcal{P}_e} \E_w \#\{t \leq\tau_\K\dvtx
W_t = v \} \leq\biggl(\frac12+o(1)\biggr)\epsilon^{-1}\log(\epsilon^3 n).\vadjust{\goodbreak}
\end{equation}
We now wish to apply Lemma \ref{lem-average-delay} to the sequence
$a_{v} = \max_{w\in\mathcal{P}_e}\E_w \#\{t \leq\tau_\K\dvtx W_t = v
\}$. Since this sequence satisfies
\[
\max_{v\in\mathcal{P}_e} a_v \leq\biggl(\frac12+o(1)\biggr)\epsilon^{-1}\log(\epsilon^3 n) ,\qquad
\sum_{v\in\mathcal{P}_e} a_v  \leq \biggl(\frac12+o(1)\biggr)\epsilon^{-2}\log^2(\epsilon^3 n) ,
\]
we deduce that there exists some absolute constant $C'>0$ such that,
except with probability $O((\epsilon^3 n)^{-2})$,
every $w\in\mathcal{P}_e$ satisfies
%
\begin{equation}\label{eq-bound-tau-K}
\E_w \tau_\K\leq C' \epsilon^{-3}\log^2{\epsilon^3 n} .
\end{equation}
Recalling that $e(\K) = (2+o(1))\epsilon^3 n$ w.h.p., we deduce that
w.h.p.\ this statement holds simultaneously for all $w\in\cH$.
Plugging \eqref{eq-bounnd-tau-1} and \eqref{eq-bound-tau-K}
into~\eqref{eq-tau-K-1} we conclude that w.h.p.\
\[
\E_v \tau_\K\leq(C' + 48) \epsilon^{-3}\log^2{\epsilon^3 n}\qquad \mbox{for all $v\in\tGC$} .
\]

Finally, we will now translate these hitting time bounds into an upper
bound on the approximate forget time for $S_t$.
Let $\pi_{\cH}$ denote the stationary measure on the walk restricted
to $\cH$:
\[
\pi_{\cH} (w) = d_{\cH}(w)/2e(\cH) \qquad \mbox{for $w\in\cH$} .
\]
Theorem \ref{thm-core-mixing} enables us to choose some absolute
constant $C_1>0$ so that $\ell$, defined in \eqref{eq-ell-def}
as $C_1 \epsilon^{-2}\log^2(\epsilon^3 n)$, would w.h.p.\ satisfy
%
\begin{equation}\label{eq-mixing-C}\max_{w\in
\cH} \Biggl\|\frac{1}{\ell}\sum_{j=1}^\ell\P_w(W_j\in\cdot) -
\pi_{\cH} \Biggr\|_{\mathrm{TV}} \leq
\frac{1}{4} .
\end{equation}
Define $\bar{\xi}_0 = \tau_\K$ and for
$j\geq0$, define $\bar{\xi}_{j+1}$ as we did for $\xi_j$'s, that is,
\[
\bar{\xi}_{j+1} = \cases{
\bar{\xi}_{j}+1, &\quad  if $S_{\bar{\xi}_{j}+1} = S_{\bar{\xi
}_{j}} $,\cr
\min \{ t > \bar{\xi}_{j} \dvtx S_t \in\cH, S_t \neq S_{\bar{\xi
}_{j}} \}, &\quad otherwise.
}
\]
Let $\Gamma$ be the stopping rule that selects $j \in\{0,\ldots,\ell
-1\}$ uniformly and then stops at $\bar{\xi}_j$.
By \eqref{eq-mixing-C}, w.h.p.
\[
\max_{v\in\tGC} \| \P_v(S_\Gamma\in\cdot) - \pi_{\cH} \|
_{\mathrm{TV}} \leq\frac{1}{4} .
\]
Going back to the definition of the approximate forget time in \eqref
{eq-def-approx-forget}, taking $\varphi= \pi_{\cH}$ with the
stopping rule $\Gamma$ yields $\mathcal{F}_{1/4} \leq\max_{v \in
\tGC} \E\Gamma\leq\max_{v \in\tGC} \bar{\xi}_\ell$.

Furthermore, combining \eqref{eq-bound-tau-j} and \eqref
{eq-bound-tau-K}, we get that w.h.p.\ for any $v\in\tGC$:
\[
\E_v \bar{\xi}_\ell\leq(C + C' + 48) \epsilon^{-3}\log
^2(\epsilon^3 n) .
\]
Altogether, we can conclude that the approximate forget time for $S_t$
w.h.p.\ satisfies that
\[
\mathcal{F}_{1/4} \leq\max_{v\in\tGC}\E_v \bar{\xi}_\ell\leq
(C + C'+48) \epsilon^{-3}\log^2(\epsilon^3 n) .\vadjust{\goodbreak}
\]
This translates into the required upper bound on $\tmix$ via an
application of Theorems \ref{thm-Cesaro-mixing-time} and \ref{thm-forget-mix}.
\qed

\subsection{\texorpdfstring{Proof of Theorem \protect\ref{mainthm-super}: Lower bound on the mixing
time}{Proof of Theorem 1: Lower bound on the mixing time}}
As before, by Theorem \ref{thm-struct-gen} it suffices to prove the
analogous statement for $\tGC$.

Let $r,s$ be as in Lemma \ref{lem-bush-diam-volume}, that is,
\[
r = \bigl\lceil\tfrac18\epsilon^{-1} \log(\epsilon^3 n)\bigr\rceil \quad
\mbox{and}\quad  s= \tfrac18\epsilon^{-2} \log(\epsilon^3 n) .
\]
Let $\mathcal{T}_v$ for $v \in\cH$ be the PGW($\mu$)-tree that is
attached to the vertex $v$. Lemma \ref{lem-bush-diam-volume} gives
that when
$n$ is sufficiently large, every $v \in\cH$ satisfies
\[
\P(A_{r, s} (\mathcal{T}_v)) \geq\epsilon(\epsilon^3 n)^{-2/3} .
\]
Since $|\cH| = (2 + o(1)) \epsilon^2 n$ w.h.p.\ (recall Theorem \ref
{thm-struct-gen}), and
since $\{\mathcal{T}_v \dvtx v\in\cH\}$ are i.i.d.\ given $\cH$, we can
conclude that w.h.p.\ there exists some
$\rho\in\cH$ such that $A_{r, s}(\mathcal{T}_{\rho})$ holds. Let
$\rho\in\cH$ therefore be such a vertex.

Let $(S_t)$ be a lazy random walk on $\tGC$ and $\pi$ be its
stationary distribution. As usual, let $\tau_v \deq\min\{t\dvtx S_t = v\}
$. We wish to prove that
%
\begin{equation}\label{eq-bound-tau-rho}
\max_{w \in\mathcal{T}_{\rho}} \P_w \biggl(\tau_{\rho} \geq\frac23
rs \biggr) \geq\frac13 .
\end{equation}

For $w \in\mathcal{T}_{\rho}$, let $\mathcal{T}_w$ be the entire
subtree rooted at $w$. Further let $L_r$ be the vertices of the $r$th
level of $\mathcal{T}_{\rho}$. By our assumption on $\mathcal
{T}_{\rho}$, there is some $\xi\in L_r$ such that $|\mathcal{T}_{\xi
}| \geq s$.

We will derive a lower bound on $\E_{\xi} \tau_{\rho}$ from the
following well-known connection between hitting-times of random walks
and flows on electrical networks (see \cite{Tetali} and also \cite{LP}, Proposition
2.19).
\begin{lemma}[(\cite{Tetali})]\label{lem-network-hitting-time}
Given a graph $G=(V,E)$ with a vertex $z$ and a subset of vertices $Z$
not containing $z$, let $v(\cdot)$ be the voltage when a unit current
flows from
$z$ to $Z$ and the voltage is $0$ on $Z$. Then $\E_z\tau_Z =\sum_{x
\in V} d(x) v(x)$.
\end{lemma}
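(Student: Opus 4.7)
The plan is to identify the Green's function of the walk killed upon hitting $Z$ with the voltage function, and then to sum. Define
\[ G(z,x) \deq \E_z\Big[\sum_{t=0}^{\tau_Z-1}\one_{\{S_t=x\}}\Big]\,, \]
so that $G(z,x)=0$ for $x\in Z$, and $\E_z\tau_Z=\sum_{x\in V}G(z,x)$. The claim will reduce to showing that $h(x)\deq G(z,x)/d(x)$ coincides with the voltage $v(x)$; summing $d(x)v(x)=G(z,x)$ over $V$ immediately yields the lemma.

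To prove $h=v$, I would verify the three boundary-value properties that characterize the voltage uniquely. First, $h$ vanishes on $Z$ by definition of $G(z,\cdot)$. Second, for $x\notin Z\cup\{z\}$, the Markov property applied at time $t-1$ gives
\[ G(z,x)=\sum_{t\geq 1}\sum_{y\sim x}\P_z(S_{t-1}=y,\,t-1<\tau_Z)\tfrac{1}{d(y)}=\sum_{y\sim x}\tfrac{G(z,y)}{d(y)}\,, \]
which rewrites as the harmonicity condition $d(x)h(x)=\sum_{y\sim x}h(y)$. Third, at $x=z$ the same expansion picks up the extra contribution from $t=0$, giving $G(z,z)=1+\sum_{y\sim z}G(z,y)/d(y)$, i.e.\ $d(z)h(z)-\sum_{y\sim z}h(y)=1$. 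These are precisely the conditions satisfied by $v(x)$ when a unit current enters at $z$, leaves at $Z$, and the voltage is pinned to $0$ on $Z$; by uniqueness of the solution to this discrete Dirichlet problem $h\equiv v$.

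The only step that requires minor care is the bookkeeping of the constant $1$ at $z$, which is the sole difference between the harmonic equation satisfied at an interior vertex and the "source" equation satisfied at $z$; this is what encodes the unit current injection. Everything else is routine: the Green's function is nonnegative and finite because $\tau_Z<\infty$ almost surely on a finite connected graph, and the identification $\sum_x d(x)v(x)=\sum_x G(z,x)=\E_z\tau_Z$ follows from the definitions. I do not anticipate any real obstacle here; the lemma is a textbook fact about reversible chains and the proof above is the standard one.
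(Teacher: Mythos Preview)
The paper does not supply a proof of this lemma; it is quoted as a known result from \cite{Tetali} (with a pointer also to \cite{LP}*{Proposition~2.19}), so there is nothing to compare against. Your argument---identifying $G(z,x)/d(x)$ with the voltage by checking harmonicity off $\{z\}\cup Z$, the source equation at $z$, and the zero boundary condition on $Z$, then summing---is correct and is exactly the standard proof one finds in the references the paper cites.
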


In our setting, we consider the graph $\tGC$. Clearly, the effective
resistance between
$\rho$ and $\xi$ satisfies $R_{\mathrm{eff}}(\rho\leftrightarrow
\xi) = r$.
If a unit current flows from $\xi$ to $\rho$
and $v(\rho)=0$, it follows from Ohm's law that $v(\xi)=r$. Notice
that for any $w \in
\mathcal{T}_{\xi}$, the flow between $w$ and $\xi$ is $0$.
Altogether, we deduce that
\[
v(w) = r \qquad \mbox{for all } w\in\mathcal{T}_{\xi} .
\]
Therefore, Lemma \ref{lem-network-hitting-time} implies that
\[
\E_{\xi} \tau_{\rho} \geq r |\mathcal{T}_{\xi}| \geq r s .
\]
Clearly, if $w^\star\in\mathcal{T}_\rho$ attains $\max\{\E_w \tau
_{\rho} \dvtx w\in\mathcal{T}_{\rho}\}$ then clearly
\[
\E_{w^\star}\tau_{\rho}\leq\tfrac23 rs + \P_{w^\star} \bigl(\tau
_{\rho} \geq\tfrac23 rs \bigr)\E_{w^\star}\tau_{\rho} .
\]
On the other hand,
\[
\E_{w^\star} \tau_{\rho} \geq\E_{\xi}\tau_{\rho} \geq r s ,
\]
hence we obtain \eqref{eq-bound-tau-rho}.

Recall that w.h.p.\ $|\tGC|
= (2+o(1))\epsilon n$. Together with Lemma \ref{lem-PGW-volume}, we
deduce that w.h.p.\ every $v \in\cH$ satisfies
\[
|\mathcal{T}_{v}| \leq6 \epsilon^{-2} \log(\epsilon^3 n) = o(|\tGC
|) .
\]
In particular, $|\mathcal{T}_{\rho}| = o(|\tGC|)$, and so (as it is
a tree) $\pi(\mathcal{T}_\rho) = o(1)$. However,~\eqref
{eq-bound-tau-rho} states that with probability
at least $\frac13$, the random walk started at some $w\in\mathcal
{T}_\rho$ does not escape from $\mathcal{T}_\rho$, hence
\[
\max_{w\in\tGC}\|\P_w(S_{2rs/3} \in\cdot) - \pi\|_{\mathrm{TV}}
\geq\frac{1}{4} ,
\]
where $\pi$ is the stationary measure for the random walk $S_t$ on
$\tGC$. In other words, we have that
\[
\tmix\bigl(\tfrac14 \bigr) \geq\tfrac23 rs = \tfrac1{96}\epsilon^{-3} \log
^2(\epsilon^3 n) ,
\]
as required.

\section{Mixing in the subcritical regime}\label{sec:mixing-subcritical}
In this section, we give the proof of Theorem \ref{mainthm-sub}. By
Theorem \ref{mainthm-super} and the well known duality between the
subcritical and supercritical regimes (see \cite{Luczak90}), it
suffices to establish the statement for the subcritical regime of $\cG(n,p)$.

For the upper bound, by results of \cite{Bollobas84} and \cite
{Luczak90} (see also \cite{NP07}), we know that the largest component
has size $O(\epsilon^{-2} \log(\epsilon^3 n))$ w.h.p., and by
results of \cite{Luczak98}, the largest diameter of a component is
w.h.p.\ $O(\epsilon^{-1} \log(\epsilon^3 n))$. Therefore, by the
commute time identity \eqref{eq-Ev-tau-w} the maximal hitting time to
a vertex is $O(\epsilon^{-3} \log^2(\epsilon^3 n))$ uniformly for
all components, and using the well-known fact that $\tmix= O(\max
_{x,y} \E_x \tau_y)$ (see, e.g., \cite{AF}, Chapter 2) we arrive at
the desired upper bound on the mixing time.

In order to establish the lower bound, we will demonstrate the
existence of a component with a certain structure, and show that the
order of the mixing time on this particular component matches the above
upper bound.

To find this component, we apply the usual exploration process until
$\epsilon n$ vertices are exposed. By definition, each component
revealed is a Galton--Watson tree (the exploration process does not
expose the tree-excess) where the offspring distribution is
stochastically dominated by $\Bin(n , \frac{1 - \epsilon}{n})$ and
stochastically dominates $\Bin(n , \frac{1 - 2\epsilon}{n})$.

It is well known [see, e.g., \cite{Lindvall}, equation (1.12)] that
for any $\lambda> 0$,
%
\[
\biggl\|\Bin\biggl(n, \frac{\lambda}{n}\biggr) - \Po(\lambda) \biggr\|_{\mathrm{TV}}
\leq\lambda^2/n .
\]
It follows that when discovering the first $\epsilon n$ vertices, we
can approximate the binomial variables by Poisson variables, at the
cost of a total error of at most $\epsilon n (1/n) = \epsilon= o(1)$.
%
\begin{lemma}\label{lem-number-components}
With high probability, once $\epsilon n$ vertices are exposed in the
exploration process, we will have discovered at least $\epsilon^2 n/2$
components.
\end{lemma}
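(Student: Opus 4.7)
The plan is to exploit the subcritical Galton--Watson structure the exploration has already been framed in: each revealed component is a GW tree whose offspring distribution is stochastically dominated by $\Bin(n,(1-\epsilon)/n)$, itself within total variation $O(1/n)$ of $\Po(1-\epsilon)$. Summing the coupling cost over the first $\epsilon n$ explored vertices gives $O(\epsilon)=o(1)$, so at a probability cost of $o(1)$ I can assume the first $m\deq\lceil \epsilon^2 n/2\rceil$ revealed component sizes $|\mathcal{C}_1|,\ldots,|\mathcal{C}_m|$ are dominated by i.i.d.\ $\mathrm{PGW}(1-\epsilon)$ tree sizes $X_1,\ldots,X_m$.

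With this coupling in hand I would let $T$ denote the number of components discovered once $\epsilon n$ vertices are exposed, extending $\mathcal{C}_i$ for $i>T$ by continuing the exploration if needed. Then $T<m$ iff $\sum_{i=1}^m |\mathcal{C}_i|>\epsilon n$, so it suffices to show $\P\bigl(\sum_{i=1}^m X_i>\epsilon n\bigr)=o(1)$. Applying Claim~\ref{claim-variance-PGW} with $\gamma=1-\epsilon$ gives $\E X_i=1/\epsilon$ and $\var(X_i)=O(\epsilon^{-3})$, so $\E\sum X_i \le \tfrac12\epsilon n + O(1/\epsilon)$ and $\var(\sum X_i)=O(\epsilon^{-1}n)$. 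Chebyshev's inequality then yields $\P\bigl(\sum X_i>\epsilon n\bigr)=O(1/(\epsilon^3 n))=o(1)$ by the hypothesis $\epsilon^3 n\to\infty$.

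The main point to watch is not analytical but a bookkeeping issue with the coupling: the offspring laws along the exploration depend on its history, so one must realize the bound $|\mathcal{C}_i|\preceq X_i$ with the $X_i$'s truly independent. This is handled by the observation that the stochastic bound $\xi\preceq \Bin(n,(1-\epsilon)/n)$ holds \emph{uniformly} in the exploration history (removing vertices from the unexplored set can only decrease the number of potential offspring), so independent Poisson couplings can be stitched together step by step. No structural difficulty beyond this is anticipated; the second-moment calculation is tight at exactly the right scale for the hypothesis $\epsilon^3 n\to\infty$ to suffice.
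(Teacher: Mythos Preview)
Your proposal is correct and follows essentially the same route as the paper: stochastically dominate the first $m=\lceil\epsilon^2 n/2\rceil$ component sizes by i.i.d.\ $\mathrm{PGW}(1-\epsilon)$-tree sizes, then apply Chebyshev using the mean and variance from Claim~\ref{claim-variance-PGW} to get $\P(\sum X_i>\epsilon n)=O((\epsilon^3 n)^{-1})$. The paper's proof is slightly terser about the coupling bookkeeping you spell out, but the argument is the same.
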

\begin{pf}
Notice that each discovered component is stochastically dominated (with
respect to containment) by a $\operatorname{Poisson}(1-\epsilon
)$--Galton--Watson tree. Thus, the probability that the first $\epsilon
^2 n/2$ components contain more than $\epsilon n $ vertices is bounded
by the probability that the total size of $\epsilon^2 n/2$ independent
PGW($1-\epsilon$)-trees is larger than $\epsilon n$. The latter can be
estimated (using Chebyshev's inequality and Claim \ref
{claim-variance-PGW}) by
\[
\P\Biggl(\sum_{i=1}^{\epsilon^2 n/2}|\mathcal{T}_i| \geq
\epsilon n \Biggr) \leq\frac{\epsilon^2 n \epsilon^{-3}}{(\epsilon
n/2)^2} = 4(\epsilon^3 n)^{-1} = o(1) .
\]
\upqed
\end{pf}

For a rooted tree $\mathcal{T}$, we define the following event,
analogous to the event~$A_{r, s}(\mathcal{T})$ from Section \ref
{subsec:pgw}:
\[
B_{r, s}(\mathcal{T}) \deq\{\exists v, w \in\mathcal{T} \mbox{
such that }|\mathcal{T}_v| \geq s , |\mathcal{T}_w| \geq s \mbox{
and } \dist(v, w) = r\} .
\]
The next lemma estimates the probability that the above defined event
occurs in a PGW-tree.
\begin{lemma}\label{lem-prob-B}
Let $\mathcal{T}$ be a $\mathrm{PGW}(1-2\epsilon)$-tree and set
$r = \lceil\frac{1}{20} \epsilon^{-1} \log(\epsilon^3 n) \rceil$
and $s= \frac{1}{64} \epsilon^{-2} \log(\epsilon^3 n)$.
Then for some $c>0$ and any sufficiently large $n$,
\[
\P(B_{r, s}(\mathcal{T})) \geq c \epsilon(\epsilon^3 n)^{-1/2} .
\]
\end{lemma}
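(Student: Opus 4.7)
My starting observation is the inclusion $A_{r,s}(\mathcal{T}) \subseteq B_{r,s}(\mathcal{T})$: if some vertex $v \in L_r$ has $|\mathcal{T}_v| \geq s$, then the root $\rho$ satisfies $|\mathcal{T}_{\rho}| = |\mathcal{T}| \geq |\mathcal{T}_v| \geq s$ and $\dist(\rho,v)=r$, so the pair $(\rho,v)$ already witnesses $B_{r,s}(\mathcal{T})$. Hence it suffices to prove $\P(A_{r,s}(\mathcal{T})) \geq c\epsilon(\epsilon^3 n)^{-1/2}$, and I propose to run the argument of Lemma~\ref{lem-bush-diam-volume} essentially verbatim, but with the supercritical conjugate $\mu \approx 1-\epsilon$ replaced by the subcritical Poisson parameter $1-2\epsilon$, and with the new constants $\tfrac{1}{20}$ and $\tfrac{1}{64}$ in $r$ and $s$.

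Step~1 is a lower bound for $p_s := \P(|\mathcal{T}|\geq s)$ from the tail formula \eqref{eq-PGW-tail-prob}. A direct Taylor expansion yields the analog of \eqref{eq-mu-e-1-mu},
\[
(1-2\epsilon)\,e^{2\epsilon} \;=\; 1 - 2\epsilon^2 + O(\epsilon^3) \;\geq\; e^{-(2+o(1))\epsilon^2},
\]
so restricting the sum in \eqref{eq-PGW-tail-prob} to $s \leq k \leq 2s$, and using that $\epsilon^2 s = \tfrac{1}{64}\log(\epsilon^3 n)$, gives
\[
p_s \;\geq\; c\, s\,(2s)^{-3/2}\, e^{-(4+o(1))\epsilon^2 s} \;=\; \Omega\!\left(\epsilon\, (\epsilon^3 n)^{-1/16+o(1)}\big/\sqrt{\log(\epsilon^3 n)}\right).
\]
Step~2 estimates the moments of $|L_r|$ for $r=\lceil \tfrac{1}{20}\epsilon^{-1}\log(\epsilon^3 n)\rceil$: since the offspring mean is $1-2\epsilon$, one has $\E|L_r|=(1-2\epsilon)^r=(\epsilon^3 n)^{-1/10+o(1)}$, and by \eqref{eq-var-L-i}, $\var|L_r|=O(\epsilon^{-1}\E|L_r|)$, giving $\E|L_r|^2 = O(\epsilon^{-1}\E|L_r|)$.

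Step~3 is a standard inclusion--exclusion. Conditioning on $L_r$ and using that $\{\mathcal{T}_v:v\in L_r\}$ are i.i.d.\ $\mathrm{PGW}(1-2\epsilon)$-trees,
\[
\P(A_{r,s}(\mathcal{T}) \mid L_r) \;\geq\; 1-(1-p_s)^{|L_r|} \;\geq\; p_s|L_r|-\tfrac12 p_s^2 |L_r|^2,
\]
and taking expectation gives $\P(A_{r,s}(\mathcal{T})) \geq p_s \E|L_r| - \tfrac12 p_s^2 \E|L_r|^2$. The leading term is $\Omega\!\bigl(\epsilon(\epsilon^3 n)^{-13/80+o(1)}/\sqrt{\log(\epsilon^3 n)}\bigr)$, and the correction is smaller by a factor of order $p_s/\epsilon$, which is $o(1)$ from the matching upper bound $p_s=O(\epsilon(\epsilon^3 n)^{-1/32+o(1)}/\sqrt{\log(\epsilon^3 n)})$ derived from the same tail sum. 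Since $\tfrac{13}{80}<\tfrac12$, the surplus polynomial factor $(\epsilon^3 n)^{27/80}$ easily swallows the $\sqrt{\log(\epsilon^3 n)}$ denominator, giving $\P(B_{r,s}(\mathcal{T}))\geq c\epsilon(\epsilon^3 n)^{-1/2}$ for any fixed $c>0$ and all sufficiently large $n$. The only real obstacle is the bookkeeping of exponents; the specific constants $\tfrac{1}{20}$ and $\tfrac{1}{64}$ in the hypothesis are chosen precisely so that $\tfrac{1}{10}+\tfrac{1}{16}=\tfrac{13}{80}$ stays well below $\tfrac12$.
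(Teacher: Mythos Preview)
Your proof is correct for the lemma as literally stated: the inclusion $A_{r,s}(\mathcal{T}) \subseteq B_{r,s}(\mathcal{T})$ via the pair $(\rho,v)$ is valid, and your adaptation of the computation in Lemma~\ref{lem-bush-diam-volume} to offspring mean $1-2\epsilon$ and the new constants $\tfrac{1}{20},\tfrac{1}{64}$ goes through cleanly (the exponents $1/16$ from $p_s$ and $1/10$ from $\E|L_r|$ are right, and the second-moment correction is $O(p_s/\epsilon)=o(1)$ times the main term). This is a genuinely more elementary route than the paper's.

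The paper instead proves a \emph{two-arm} event: it first shows $\P(|L_{1/\epsilon}|>c_1\epsilon^{-1})=\Theta(\epsilon)$, then uses independence of the subtrees $\{\mathcal{T}_u:u\in L_{1/\epsilon}\}$ together with the single-arm bound $\P(A_{r,s}(\mathcal{T}))\geq\epsilon(\epsilon^3 n)^{-1/4}$ to get two distinct $u,u'\in L_{1/\epsilon}$ with $A_{r,s}(\mathcal{T}_u)\cap A_{r,s}(\mathcal{T}_{u'})$ holding, with probability $\geq c\epsilon(\epsilon^3 n)^{-1/2}$. This produces witnesses $v,w$ both at depth about $r$ with \emph{disjoint} subtrees $\mathcal{T}_v,\mathcal{T}_w$, each of size $\geq s$. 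That disjointness is precisely what the paper uses immediately afterward: the mixing-time lower bound invokes $\pi(\mathcal{T}_w)\geq s/|\mathcal{T}|\geq 1/384$ while the walk from $\mathcal{T}_v$ is slow to hit $w$. In your construction $w=\rho$, so $\mathcal{T}_w=\mathcal{T}$ and $\pi(\mathcal{T}_w)=1$, which breaks that particular deduction (one would need to rephrase the target set as, say, $\mathcal{T}\setminus\mathcal{T}_v$). So your shortcut proves the lemma as written but not the stronger structural statement the paper actually exploits downstream.
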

\begin{pf}
$\!\!$The proof follows the general argument of Lemma \ref
{lem-bush-diam-volume}. By Lem\-ma~\ref{lem-PGW-diameter},
\[
\P(L_{1/\epsilon} \neq\varnothing) \asymp\epsilon.
\]
Combined with the proof of Claim \ref{claim-variance-PGW} [see \eqref
{eq-var-L-i} in particular], we get that
\[
\E(|L_{1/\epsilon}| \mid L_{1/\epsilon}\neq\varnothing) \asymp
\epsilon^{-1} \quad  \mbox{and}\quad  \var(|L_{1/\epsilon}| \mid
L_{1/\epsilon} \neq\varnothing) \asymp\epsilon^{-2} .
\]
Applying Chebyshev's inequality, we get that for some constants $c_1, c_2>0$
\[
\P(|L_{1/\epsilon}| > c_1\epsilon^{-1} \mid L_{1/\epsilon} \neq
\varnothing) \geq c_2 .
\]
Repeating the arguments for the proof of Lemma \ref
{lem-bush-diam-volume}, we conclude that for a~PGW($1-2\epsilon$)-tree
$\mathcal{T}$, the probability that the event $A_{r, s}(\mathcal{T})$
occurs (using~$r, s$ as defined in the current lemma) is at least
$\epsilon(\epsilon^3 n)^{-1/4}$ for $n$ large enough. Thus [by the
independence of the subtrees rooted in the $(1/\epsilon)$th level],
\[
\P\biggl(\bigcup \{A_{r,s}(\mathcal{T}_u) \cap A_{r, s}(\mathcal
{T}_{u'})\dvtx u, u' \in L_{1/\epsilon} , u \neq u' \} \bigm|
|L_{1/\epsilon}| > c_1 \epsilon^{-1} \biggr) \geq c (\epsilon^3 n)^{-1/2}
\]
for some $c > 0$. Altogether, we conclude that for some $c' > 0$,
\[
\P\biggl(\bigcup\{A_{r,s}(\mathcal{T}_u) \cap A_{r, s}(\mathcal{T}_{u'})\dvtx
u, u' \in L_{1/\epsilon}, u \neq u' \} \biggr) \geq c' \epsilon
(\epsilon^3 n)^{-1/2} ,
\]
which immediately implies that required bound on $\P(B_{r, s}(\mathcal{T}))$.
\end{pf}

Combining Lemmas \ref{lem-number-components} and \ref{lem-prob-B}, we
conclude that w.h.p., during our exploration process we will find a
tree $\mathcal{T}$ which satisfies the event $B_{r,s}(\mathcal{T})$
for $r, s$ as defined in Lemma \ref{lem-prob-B}. Next, we will show
that the component of~$\mathcal{T}$ is indeed a tree, namely, it has
no tree-excess. Clearly, edges belonging to the tree-excess can only
appear between vertices that belong either to the same level or to
successive levels (the root of the tree $\mathcal{T}$ is defined to be
the vertex in $\mathcal{T}$ that is first exposed). Therefore, the
total number of candidates for such edges can be bounded by $4\sum_{i}
|L_i|^2$ where $L_i$ is the $i$th level of vertices in the tree. The
next claim provides an upper bound for this sum.
\begin{claim}
Let $r, s$ be defined as in Lemma \ref{lem-prob-B}. Then the $\mathrm
{PGW}(1-\epsilon)$-tree $\mathcal{T}$ satisfies $\E [\sum_i |L_i|^2
\mid B_{r, s}(\mathcal{T}) ] = O ( \epsilon^{-3} \sqrt{\epsilon^3
n} )$.
\end{claim}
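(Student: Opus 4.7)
The plan is to combine a simple unconditional second-moment computation with the probability lower bound already furnished by Lemma~\ref{lem-prob-B}. Setting $X \deq \sum_{i \geq 0} |L_i|^2$, the trivial inequality $\E[X \one_{B_{r,s}}] \leq \E X$ reduces matters to showing $\E X = O(\epsilon^{-2})$ unconditionally and then dividing by $\P(B_{r,s}(\mathcal{T}))$.

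For the unconditional moment, I would set $\gamma = 1-\epsilon$ and invoke the computation inside the proof of Claim~\ref{claim-variance-PGW} (in particular \eqref{eq-var-L-i}), which gives $\E|L_i| = \gamma^i$ and $\var(|L_i|) = \gamma^i(1-\gamma^i)/(1-\gamma)$. Hence $\E|L_i|^2 = \gamma^{2i} + \gamma^i(1-\gamma^i)/\epsilon$, and summing over $i \geq 0$ yields two convergent geometric series: the first sums to $1/(1-\gamma^2) = O(\epsilon^{-1})$, while the second telescopes to $\gamma/((1-\gamma)^2(1+\gamma)) = O(\epsilon^{-2})$. Thus $\E X = O(\epsilon^{-2})$.

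For the denominator, Lemma~\ref{lem-prob-B} gives $\P(B_{r,s}(\mathcal{T}')) \geq c\epsilon(\epsilon^3 n)^{-1/2}$ for a PGW($1-2\epsilon$)-tree $\mathcal{T}'$. I would transfer this bound to our PGW($1-\epsilon$)-tree $\mathcal{T}$ via the standard monotone coupling in which $\mathcal{T}' \subseteq \mathcal{T}$ share a root (such a coupling exists because $\mathrm{Poisson}(1-\epsilon)$ stochastically dominates $\mathrm{Poisson}(1-2\epsilon)$). The event $B_{r,s}$ is monotone under such rooted inclusion: if $v, w \in \mathcal{T}'$ witness the event in $\mathcal{T}'$, then $|\mathcal{T}_v| \geq |\mathcal{T}'_v| \geq s$ and $|\mathcal{T}_w| \geq s$, while $\dist_\mathcal{T}(v,w) = \dist_{\mathcal{T}'}(v,w) = r$ since paths in trees are unique. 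Therefore $\P(B_{r,s}(\mathcal{T})) \geq c\epsilon(\epsilon^3 n)^{-1/2}$, and dividing gives $\E[X \mid B_{r,s}(\mathcal{T})] = O(\epsilon^{-2})/\Omega(\epsilon(\epsilon^3 n)^{-1/2}) = O(\epsilon^{-3}\sqrt{\epsilon^3 n})$, as desired.

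I do not expect any substantive obstacle. One might initially worry that conditioning on $B_{r,s}$ — which forces two subtrees of size $\geq s = \Theta(\epsilon^{-2}\log(\epsilon^3 n))$ at distance $r$ — inflates the second moment so much that the crude estimate $\E[X \one_{B_{r,s}}] \leq \E X$ is wasteful. However, the target bound $O(\epsilon^{-3}\sqrt{\epsilon^3 n})$ is precisely $\E X / \P(B_{r,s})$ up to a constant, so no refined conditional analysis is required; the polynomial smallness of $\P(B_{r,s})$ exactly absorbs the slack in the Markov-type inequality.
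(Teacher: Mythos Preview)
Your proposal is correct and follows essentially the same approach as the paper's proof, which simply bounds $\E\big[\sum_i |L_i|^2\big] \leq \epsilon^{-2}$ via \eqref{eq-var-L-i} and then divides by the probability bound from Lemma~\ref{lem-prob-B}. Your monotone-coupling step to transfer the lower bound on $\P(B_{r,s})$ from the $\mathrm{PGW}(1-2\epsilon)$-tree of Lemma~\ref{lem-prob-B} to the $\mathrm{PGW}(1-\epsilon)$-tree is a detail the paper leaves implicit, and it is handled correctly.
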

\begin{pf}
Recalling Claim \ref{claim-variance-PGW} and in particular equation
\eqref{eq-var-L-i}, it follows that
$\E(\sum_i |L_i|^2 ) \leq\epsilon^{-2}$.
Lemma \ref{lem-prob-B} now implies the required upper bound.
\end{pf}

By the above claim and Markov's inequality, we deduce that w.h.p.\
there are, say, $O (\epsilon^{-3}(\epsilon^3 n)^{2/3} )$ candidates
for edges in the tree-excess of the component of~$\mathcal{T}$.
Crucially, whether or not these edges appear is independent of the
exploration process, hence the probability that any of them appears is
at most $O ((\epsilon^3 n)^{-1/3} ) = o(1)$.
Altogether, we may assume that the component of $\mathcal{T}$ is
indeed a tree which satisfies the event $B_{r, s}(\mathcal{T})$.

It remains to establish the lower bound on the mixing time of the
random walk on the tree $\mathcal{T}$. Let $v, w$ be two distinct
vertices in the $r$th level satisfying $|\mathcal{T}_v| \geq s$ and
$|\mathcal{T}_w| \geq s$. By the same arguments used to prove \eqref
{eq-bound-tau-rho}, we have that
\[
\max_{u\in\mathcal{T}_v} \P_u(\tau_w \geq10^{-3}rs)\geq1 -
10^{-3} .
\]
Recall that w.h.p.\ $|\mathcal{T}|\leq6\epsilon^{-2}\log(\epsilon
^3n) = 384 s$. It now follows that w.h.p.\ the mixing time of the
random walk on this components satisfies
\[
\tmix(\delta) \geq10^{-3} rs \qquad  \mbox{for } \delta= \tfrac
{1}{384} - 10^{-3} \geq10^{-3} .
\]
The lower bound on $\tmix(\frac14)$ now follows from the definition
of $r,s$.

\section*{Acknowledgments}
We thank Asaf Nachmias for helpful discussions at an early stage of
this project.


%

\printaddresses


\begin{thebibliography}{31}

\bibitem{AF}
%
\begin{bmisc}[auto:STB|2011-03-03|12:04:44]
\bauthor{\bsnm{Aldous}, \bfnm{David}\binits{D.}} \AND
\bauthor{\bsnm{Fill}, \bfnm{James Allen}\binits{J. A.}}
\bhowpublished{Reversible Markov Chains and random walks on graphs. Unpublished
manuscript. Available at
\texttt{\href{http://www.stat.berkeley.edu/\textasciitilde aldous/RWG/book.html}{http://www.stat.berkeley.edu/}
\href{http://www.stat.berkeley.edu/\textasciitilde aldous/RWG/book.html}{\textasciitilde aldous/RWG/book.html}}}.
\end{bmisc}
%
\endbibitem

\bibitem{ALW}
%
\begin{barticle}[mr]
\bauthor{\bsnm{Aldous}, \bfnm{David}\binits{D.}},
\bauthor{\bsnm{Lov{\'a}sz}, \bfnm{L{\'a}szl{\'o}}\binits{L.}} \AND
\bauthor{\bsnm{Winkler}, \bfnm{Peter}\binits{P.}}
(\byear{1997}).
\btitle{Mixing times for uniformly ergodic {M}arkov chains}.
\bjournal{Stochastic Process. Appl.}
\bvolume{71}
\bpages{165--185}.
\bid{doi={10.1016/S0304-4149(97)00037-9}, issn={0304-4149}, mr={1484158}}
\end{barticle}
%
\endbibitem

\bibitem{BKW}
%
\begin{bmisc}[auto:STB|2011-03-03|12:04:44]
\bauthor{\bsnm{Benjamini}, \bfnm{Itai}\binits{I.}},
\bauthor{\bsnm{Kozma}, \bfnm{Gadi}\binits{G.}} \AND
\bauthor{\bsnm{Wormald}, \bfnm{Nicholas C.}\binits{N. C.}}
\bhowpublished{The mixing time of the giant component of a random graph.
Preprint. Available at
\texttt{\href{http://arxiv.org/abs/math/0610459}{http://arxiv.org/abs/}
\href{http://arxiv.org/abs/math/0610459}{math/0610459}}}.
\end{bmisc}
%
\endbibitem

\bibitem{BM}
%
\begin{barticle}[mr]
\bauthor{\bsnm{Benjamini}, \bfnm{Itai}\binits{I.}} \AND
\bauthor{\bsnm{Mossel}, \bfnm{Elchanan}\binits{E.}}
(\byear{2003}).
\btitle{On the mixing time of a simple random walk on the super critical
percolation cluster}.
\bjournal{Probab. Theory Related Fields}
\bvolume{125}
\bpages{408--420}.
\bid{doi={10.1007/s00440-002-0246-y}, issn={0178-8051}, mr={1967022}}
\end{barticle}
%
\endbibitem

\bibitem{Bollobas84}
%
\begin{barticle}[mr]
\bauthor{\bsnm{Bollob{\'a}s}, \bfnm{B{\'e}la}\binits{B.}}
(\byear{1984}).
\btitle{The evolution of random graphs}.
\bjournal{Trans. Amer. Math. Soc.}
\bvolume{286}
\bpages{257--274}.
\bid{doi={10.2307/1999405}, issn={0002-9947}, mr={0756039}}
\end{barticle}
%
\endbibitem

\bibitem{Bollobas2}
%
\begin{bbook}[mr]
\bauthor{\bsnm{Bollob{\'a}s}, \bfnm{B{\'e}la}\binits{B.}}
(\byear{2001}).
\btitle{Random Graphs},
\bedition{2nd} ed.
\bseries{Cambridge Studies in Advanced Mathematics}
\bvolume{73}.
\bpublisher{Cambridge Univ. Press}, \baddress{Cambridge}.
\bid{mr={1864966}}
\end{bbook}
%
\endbibitem

\bibitem{CRRST}
%
\begin{barticle}[mr]
\bauthor{\bsnm{Chandra}, \bfnm{Ashok K.}\binits{A. K.}},
\bauthor{\bsnm{Raghavan}, \bfnm{Prabhakar}\binits{P.}},
\bauthor{\bsnm{Ruzzo}, \bfnm{Walter L.}\binits{W. L.}},
\bauthor{\bsnm{Smolensky}, \bfnm{Roman}\binits{R.}} \AND
\bauthor{\bsnm{Tiwari}, \bfnm{Prasoon}\binits{P.}}
(\byear{1996/97}).
\btitle{The electrical resistance of a graph captures its commute and cover
times}.
\bjournal{Comput. Complexity}
\bvolume{6}
\bpages{312--340}.
\bid{doi={10.1007/BF01270385}, issn={1016-3328}, mr={1613611}}
\end{barticle}
%
\endbibitem

\bibitem{DKLP2}
%
\begin{barticle}[mr]
\bauthor{\bsnm{Ding}, \bfnm{Jian}\binits{J.}},
\bauthor{\bsnm{Kim}, \bfnm{Jeong Han}\binits{J. H.}},
\bauthor{\bsnm{Lubetzky}, \bfnm{Eyal}\binits{E.}} \AND
\bauthor{\bsnm{Peres}, \bfnm{Yuval}\binits{Y.}}
(\byear{2010}).
\btitle{Diameters in supercritical random graphs via first passage
percolation}.
\bjournal{Combin. Probab. Comput.}
\bvolume{19}
\bpages{729--751}.
\bid{doi={10.1017/S0963548310000301}, issn={0963-5483}, mr={2726077}}
\end{barticle}
%
\endbibitem

\bibitem{DKLP1}
%
\begin{barticle}[auto:STB|2011-03-03|12:04:44]
\bauthor{\bsnm{Ding}, \bfnm{Jian}\binits{J.}},
\bauthor{\bsnm{Kim}, \bfnm{Jeong Han}\binits{J. H.}},
\bauthor{\bsnm{Lubetzky}, \bfnm{Eyal}\binits{E.}} \AND
\bauthor{\bsnm{Peres}, \bfnm{Yuval}\binits{Y.}}
(\byear{2011}).
\btitle{Anatomy of a young giant component in the random graph}.
\bjournal{Random Structures Algorithms}
\bvolume{39}
\bpages{139--178}.
\bid{mr={2850267}}
\end{barticle}
%
\endbibitem

\bibitem{ER59}
%
\begin{barticle}[mr]
\bauthor{\bsnm{Erd{\H{o}}s}, \bfnm{P.}\binits{P.}} \AND
\bauthor{\bsnm{R{\'e}nyi}, \bfnm{A.}\binits{A.}}
(\byear{1959}).
\btitle{On random graphs. {I}}.
\bjournal{Publ. Math. Debrecen}
\bvolume{6}
\bpages{290--297}.
\bid{issn={0033-3883}, mr={0120167}}
\end{barticle}
%
\endbibitem

\bibitem{FR1}
%
\begin{barticle}[mr]
\bauthor{\bsnm{Fountoulakis}, \bfnm{N.}\binits{N.}} \AND
\bauthor{\bsnm{Reed}, \bfnm{B. A.}\binits{B. A.}}
(\byear{2007}).
\btitle{Faster mixing and small bottlenecks}.
\bjournal{Probab. Theory Related Fields}
\bvolume{137}
\bpages{475--486}.
\bid{doi={10.1007/s00440-006-0003-8}, issn={0178-8051}, mr={2278465}}
\end{barticle}
%
\endbibitem

\bibitem{FR2}
%
\begin{barticle}[mr]
\bauthor{\bsnm{Fountoulakis}, \bfnm{N.}\binits{N.}} \AND
\bauthor{\bsnm{Reed}, \bfnm{B. A.}\binits{B. A.}}
(\byear{2008}).
\btitle{The evolution of the mixing rate of a simple random walk on
the giant
component of a random graph}.
\bjournal{Random Structures Algorithms}
\bvolume{33}
\bpages{68--86}.
\bid{doi={10.1002/rsa.20210}, issn={1042-9832}, mr={2428978}}
\end{barticle}
%
\endbibitem

\bibitem{HvdH}
%
\begin{barticle}[auto:STB|2011-03-03|12:04:44]
\bauthor{\bsnm{Heydenreich}, \bfnm{Markus}\binits{M.}} \AND
\bauthor{\bparticle{van der} \bsnm{Hofstad}, \bfnm{Remco}\binits{R.}}
(\byear{2011}).
\btitle{Random graph asymptotics on high-dimensional tori II: Volume,
diameter and mixing time}.
\bjournal{Probab. Theory Related Fields}
\bvolume{149}
\bpages{397--415}.
\bid{mr={2776620}}
\end{barticle}
%
\endbibitem

\bibitem{KimB}
%
\begin{bincollection}[mr]
\bauthor{\bsnm{Kim}, \bfnm{Jeong Han}\binits{J. H.}}
(\byear{2006}).
\btitle{Poisson cloning model for random graphs}.
In \bbooktitle{International {C}ongress of {M}athematicians. {V}ol. {III}}
\bpages{873--897}.
\bpublisher{Eur. Math. Soc., Z\"urich}.
\bid{mr={2275710}}
\end{bincollection}
%
\endbibitem

\bibitem{LPW}
%
\begin{bbook}[mr]
\bauthor{\bsnm{Levin}, \bfnm{David A.}\binits{D. A.}},
\bauthor{\bsnm{Peres}, \bfnm{Yuval}\binits{Y.}} \AND
\bauthor{\bsnm{Wilmer}, \bfnm{Elizabeth L.}\binits{E. L.}}
(\byear{2009}).
\btitle{Markov Chains and Mixing Times}.
\bpublisher{Amer. Math. Soc.}, \baddress{Providence, RI}.
\bid{mr={2466937}}
\end{bbook}
%
\endbibitem

\bibitem{Lindvall}
%
\begin{bbook}[mr]
\bauthor{\bsnm{Lindvall}, \bfnm{Torgny}\binits{T.}}
(\byear{2002}).
\btitle{Lectures on the Coupling Method}.
\bpublisher{Dover Publications Inc.}, \baddress{Mineola, NY}.
\bid{mr={1924231}}
\end{bbook}
%
\endbibitem

\bibitem{LK}
%
\begin{bincollection}[mr]
\bauthor{\bsnm{Lov{\'a}sz}, \bfnm{L{\'a}szl{\'o}}\binits{L.}} \AND
\bauthor{\bsnm{Kannan}, \bfnm{Ravi}\binits{R.}}
(\byear{1999}).
\btitle{Faster mixing via average conductance}.
In \bbooktitle{Annual {ACM} {S}ymposium on {T}heory of {C}omputing ({A}tlanta,
{GA}, 1999)}
\bpages{282--287 (electronic)}.
\bpublisher{ACM}, \baddress{New York}.
\bid{doi={10.1145/301250.301317}, mr={1798047}}
\end{bincollection}
%
\endbibitem

\bibitem{LW95}
%
\begin{bincollection}[mr]
\bauthor{\bsnm{Lov{\'a}sz}, \bfnm{L{\'a}szl{\'o}}\binits{L.}} \AND
\bauthor{\bsnm{Winkler}, \bfnm{Peter}\binits{P.}}
(\byear{1995}).
\btitle{Mixing of random walks and other diffusions on a graph}.
In \bbooktitle{Surveys in Combinatorics, 1995 ({S}tirling)}.
\bseries{London Mathematical Society Lecture Note Series}
\bvolume{218}
\bpages{119--154}.
\bpublisher{Cambridge Univ. Press}, \baddress{Cambridge}.
\bid{doi={10.1017/CBO9780511662096.007}, mr={1358634}}
\end{bincollection}
%
\endbibitem

\bibitem{LW}
%
\begin{bincollection}[mr]
\bauthor{\bsnm{Lov{\'a}sz}, \bfnm{L{\'a}szl{\'o}}\binits{L.}} \AND
\bauthor{\bsnm{Winkler}, \bfnm{Peter}\binits{P.}}
(\byear{1998}).
\btitle{Mixing times}.
In \bbooktitle{Microsurveys in Discrete Probability ({P}rinceton,
{NJ}, 1997)}.
\bseries{DIMACS Ser. Discrete Math. Theoret. Comput. Sci.}
\bvolume{41}
\bpages{85--133}.
\bpublisher{Amer. Math. Soc.}, \baddress{Providence, RI}.
\bid{mr={1630411}}
\end{bincollection}
%
\endbibitem

\bibitem{Luczak90}
%
\begin{barticle}[mr]
\bauthor{\bsnm{{\L}uczak}, \bfnm{Tomasz}\binits{T.}}
(\byear{1990}).
\btitle{Component behavior near the critical point of the random graph
process}.
\bjournal{Random Structures Algorithms}
\bvolume{1}
\bpages{287--310}.
\bid{doi={10.1002/rsa.3240010305}, issn={1042-9832}, mr={1099794}}
\end{barticle}
%
\endbibitem

\bibitem{Luczak98}
%
\begin{binproceedings}[mr]
\bauthor{\bsnm{{\L}uczak}, \bfnm{Tomasz}\binits{T.}}
(\byear{1998}).
\btitle{Random trees and random graphs}.
In \bbooktitle{Proceedings of the {E}ighth {I}nternational {C}onference
``{R}andom {S}tructures and {A}lgorithms'' ({P}oznan, 1997)}
\bvolume{13}
\bpages{485--500}.
\bid{doi={10.1002/(SICI)1098-2418(199810/12)13:3/4\&lt;485::AID-RSA16\&gt;3.3.CO
;2-7}, issn={1042-9832}, mr={1662797}}
\end{binproceedings}
%
\endbibitem

\bibitem{LP}
%
\begin{bmisc}[auto:STB|2011-03-03|12:04:44]
\bauthor{\bsnm{Lyons}, \bfnm{R.}\binits{R.}} \AND
\bauthor{\bsnm{Peres}, \bfnm{Y.}\binits{Y.}}
(\byear{2012}).
\bhowpublished{\textit{Probability on Trees and Networks}. Cambridge Univ.
Press. To appear. Available at
\texttt{\href{http://mypage.iu.edu/\textasciitilde rdlyons/prbtree/book.pdf}{http://mypage.iu.edu/\textasciitilde rdlyons/}
\href{http://mypage.iu.edu/\textasciitilde rdlyons/prbtree/book.pdf}{prbtree/book.pdf}}}.
\end{bmisc}
%
\endbibitem

\bibitem{MR}
%
\begin{barticle}[mr]
\bauthor{\bsnm{Mathieu}, \bfnm{Pierre}\binits{P.}} \AND
\bauthor{\bsnm{Remy}, \bfnm{Elisabeth}\binits{E.}}
(\byear{2004}).
\btitle{Isoperimetry and heat kernel decay on percolation clusters}.
\bjournal{Ann. Probab.}
\bvolume{32}
\bpages{100--128}.
\bid{doi={10.1214/aop/1078415830}, issn={0091-1798}, mr={2040777}}
\end{barticle}
%
\endbibitem

\bibitem{NP07}
%
\begin{barticle}[mr]
\bauthor{\bsnm{Nachmias}, \bfnm{Asaf}\binits{A.}} \AND
\bauthor{\bsnm{Peres}, \bfnm{Yuval}\binits{Y.}}
(\byear{2007}).
\btitle{Component sizes of the random graph outside the scaling window}.
\bjournal{ALEA Lat. Am. J. Probab. Math. Stat.}
\bvolume{3}
\bpages{133--142}.
\bid{issn={1980-0436}, mr={2349805}}
\end{barticle}
%
\endbibitem

\bibitem{NP}
%
\begin{barticle}[mr]
\bauthor{\bsnm{Nachmias}, \bfnm{Asaf}\binits{A.}} \AND
\bauthor{\bsnm{Peres}, \bfnm{Yuval}\binits{Y.}}
(\byear{2008}).
\btitle{Critical random graphs: Diameter and mixing time}.
\bjournal{Ann. Probab.}
\bvolume{36}
\bpages{1267--1286}.
\bid{doi={10.1214/07-AOP358}, issn={0091-1798}, mr={2435849}}
\end{barticle}
%
\endbibitem


\bibitem{Pitman}
%
\begin{bincollection}[mr]
\bauthor{\bsnm{Pitman}, \bfnm{Jim}\binits{J.}}
(\byear{1998}).
\btitle{Enumerations of trees and forests related to branching
processes and
random walks}.
In \bbooktitle{Microsurveys in Discrete Probability ({P}rinceton,
{NJ}, 1997)}.
\bseries{DIMACS Ser. Discrete Math. Theoret. Comput. Sci.}
\bvolume{41}
\bpages{163--180}.
\bpublisher{Amer. Math. Soc.}, \baddress{Providence, RI}.
\bid{mr={1630413}}
\end{bincollection}
%
\endbibitem

\bibitem{RW}
%
\begin{barticle}[mr]
\bauthor{\bsnm{Riordan}, \bfnm{Oliver}\binits{O.}} \AND
\bauthor{\bsnm{Wormald}, \bfnm{Nicholas}\binits{N.}}
(\byear{2010}).
\btitle{The diameter of sparse random graphs}.
\bjournal{Combin. Probab. Comput.}
\bvolume{19}
\bpages{835--926}.
\bid{doi={10.1017/S0963548310000325}, issn={0963-5483}, mr={2726083}}
\end{barticle}
%
\endbibitem

\bibitem{JS}
%
\begin{barticle}[mr]
\bauthor{\bsnm{Sinclair}, \bfnm{Alistair}\binits{A.}} \AND
\bauthor{\bsnm{Jerrum}, \bfnm{Mark}\binits{M.}}
(\byear{1989}).
\btitle{Approximate counting, uniform generation and rapidly mixing {M}arkov
chains}.
\bjournal{Inform. and Comput.}
\bvolume{82}
\bpages{93--133}.
\bid{issn={0890-5401}, mr={1003059}}
\end{barticle}
%
\endbibitem

\bibitem{Tetali}
%
\begin{barticle}[mr]
\bauthor{\bsnm{Tetali}, \bfnm{Prasad}\binits{P.}}
(\byear{1991}).
\btitle{Random walks and the effective resistance of networks}.
\bjournal{J. Theoret. Probab.}
\bvolume{4}
\bpages{101--109}.
\bid{doi={10.1007/BF01046996}, issn={0894-9840}, mr={1088395}}
\end{barticle}
%
\endbibitem

\end{thebibliography}
\end{document}